\newtheorem{thm}{Theorem}[section] \newtheorem{lem}[thm]{Lemma}
\newtheorem{prop}[thm]{Proposition} \theoremstyle{definition}
\newtheorem{defn}[thm]{Definition}
\newtheorem{conv}[thm]{Convention} \newtheorem{rem}[thm]{Remark}
\newtheorem{theor}{Theorem}
\definecolor{darkred}{rgb}{1,0,0} 
\definecolor{darkgreen}{rgb}{0,0.8,0}
\definecolor{darkblue}{rgb}{0,0,1}
\begin{document}
\title{Nielsen equivalence in a class of random groups}

\author{Ilya Kapovich and Richard Weidmann}

\address{\tt Department of Mathematics, University of Illinois at
 Urbana-Champaign, 1409 West Green Street, Urbana, IL 61801, USA
 \newline http://www.math.uiuc.edu/\~{}kapovich/} \email{\tt
  kapovich@math.uiuc.edu}

\address{\tt Mathematisches Seminar, Christian-Albrechts-Universit\"at zu Kiel, Ludewig-Meyn Str.~4, 24098 Kiel, Germany}
\email{\tt weidmann@math.uni-kiel.de}

\subjclass[2000]{Primary 20F65, Secondary 20F67, 57M07}

\thanks{The first author was supported by he Collaboration Grant no. 279836 from the Simons Foundation, and  by the National Science Foundation grant DMS-1405146}


\date{}

\maketitle

\begin{abstract} 
We show that for every $n\ge 2$ there exists a torsion-free one-ended word-hyperbolic group $G$ of rank  $n$ admitting generating $n$-tuples $(a_1,\ldots ,a_n)$ and $(b_1,\ldots ,b_n)$ such that the $(2n-1)$-tuples $$(a_1,\ldots ,a_n, \underbrace{1,\ldots ,1}_{n-1 \text{ times}})\hbox{ and }(b_1,\ldots, b_n, \underbrace{1,\ldots ,1}_{n-1 \text{ times}} )$$ are not Nielsen equivalent in $G$. The group $G$ is produced via a probabilistic construction.
\end{abstract}

\section*{Introduction}

The notion of Nielsen equivalence goes back to the origins of geometric group theory and the work of Jakob Nielsen in nineteen twenties \cite{N1,N2}.
If $G$ is a group,  $n\ge 1$ and $\tau=(g_1,\dots, g_n)$ is an  $n$-tuple of elements of $G$,  an \emph{elementary Nielsen transformation}  on $\tau$ is one of the following three types of moves:
\begin{enumerate}
\item For some $i\in \{1,\dots, n\}$ replace $g_i$ in $\tau$ by $g_i^{-1}$.
\item For some $i\ne j$, $i,j\in \{1,\dots, n\}$ interchange $g_i$ and $g_j$ in $\tau$. 
\item For some $i\ne j$, $i,j\in \{1,\dots, n\}$ 
replace $g_i$ by $g_ig_j^{\pm 1}$ in~$\tau$.
\end{enumerate}
Two $n$-tuples $\tau=(g_1,\dots, g_n)$ and $\tau=(g_1',\dots, g_n')$ of elements of $G$ are called \emph{Nielsen equivalent}, denoted $\tau\sim_{NE} \tau'$, if there exists a finite chain of elementary Nielsen transformations taking $\tau$ to $\tau'$.   Since elementary Nielsen transformations are invertible it follows that Nielsen equivalence is an equivalence relation on the set $G^n$ of  $n$-tuples of elements of $G$ for every $n\ge 1$. 

\smallskip Nielsen showed~\cite{N1,N2}  that for the free group $F_n:=F(x_1,\ldots ,x_n)$ any generating $n$-tuple of $F_n$ is Nielsen equivalent to the standard basis $(x_1,\ldots ,x_n)$. This fact implies, in particular, that the automorphism group of a finitely generated free group is generated by so called ``Nielsen automorphisms" that correspond to the elementary equivalences discussed above. It follows that for any group $G$ two $n$-tuples $\tau=(g_1,\dots, g_n), \tau'=(g_1',\dots, g_n')\in G^n$ are Nielsen equivalent  in $G$ if and only if there exist $\varphi\in Hom(F_n,G)$ and $\alpha\in Aut(F_n)$ such that the following hold:
\begin{enumerate}
\item $g_i=\varphi(x_i)$ for $1\le i\le n$.
\item $g_i'=\varphi\circ\alpha(x_i)$ for $1\le i\le n$.
\end{enumerate}
Thus Nielsen equivalence classes of $n$-tuples in $G$ correspond to orbits in $Hom(F_n,G)$ under the natural right action of $Aut(F_n)$. In the case of generating tuples these are orbits in $Epi(F_n,G)$ under the action of $Aut(F_n)$.

\smallskip If $\langle g_1,\dots, g_n\rangle=G$ then an arbitrary $h\in G$ can be expressed as a product of elements of $\{g_1,\dots, g_n\}^{\pm 1}$.  Therefore the $(n+1)$-tuples $(g_1,\dots,g_n,1)$ and $(g_1,\dots,g_n,h)$ are Nielsen-equivalent. For similar reasons, if $k\ge 1$ and $h_1,\dots, h_k\in G$ are arbitrary, then $(g_1,\dots,g_n,\underbrace{1,\dots, 1}_{k \text{ times}})\sim_{NE} (g_1,\dots, g_n, h_1,\dots, h_k)$.  It follows that if $(h_1,\dots, h_n)\in G$ is another generating tuple for $G$ then
\[
(g_1,\dots,g_n,\underbrace{1,\dots, 1}_{n \text{ times}})\sim_{NE} (g_1,\ldots ,g_n,h_1,\ldots,h_n)\sim_{NE} (h_1,\dots,h_n,\underbrace{1,\dots, 1}_{n \text{ times}}) \quad \text{ in } G.
\]

Recall that for a finitely generated group $G$ the \emph{rank} of $G$, denoted $rank(G)$, is the smallest number of elements in a generating set of $G$. A generating $n$-tuple of $G$ is called \emph{minimal} if $n=rank(G)$. 

The operation of transforming an $n$-tuple of elements of $G$ into an $(n+1)$-tuple by appending the identity element as the $(n+1)$-st entry  is sometimes called a \emph{stabilization move}.  As we observed above,  any two generating $n$-tuples of a group $G$ become Nielsen equivalent after $n$ stabilization moves.  The main goal of this paper is to show that there exist groups that have two generating $n$-tuples that do not become Nielsen equivalent after $n-1$ stabilizations. Evans previously showed~\cite{E2,E3} that for any $k$ there exists a metabelian group $G$ and generating $n$-tuples that do not become Nielsen equivalent after $k$ stabilizations. However, in these example $n$ is much larger than $k$ and the generating tuples  are not minimal.

\smallskip The main result of this paper is:

\begin{theor}\label{thm:A} Let  $n\ge 2$ be an arbitrary integer. Then there exists a generic set $\mathcal R$ of $2n$-tuples $$\tau=(v_1,\dots,v_n,u_1,\dots,u_n)$$ where for each $\tau\in \mathcal R$ every $v_i$ is a freely reduced  word in $F(a_1,\dots, a_n)$, every $u_i$ is a freely reduced word in $F(b_1,\dots,b_n)$,  where $|v_1|=\dots=|v_n|=|u_1|=\dots=|u_n|$ and such that for each $\tau=(v_1,\dots,v_n,u_1,\dots,u_n)\in \mathcal R$ the following holds:

Let $G$ be a group given by the presentation
\[
G=\langle a_1,\ldots ,a_n,b_1,\ldots ,b_n|a_i=u_i(\underline{b}),b_i=v_i(\underline{a}), \text { for } i=1,\dots, n\rangle. \tag{*}
\]
Then $G$ is a torsion-free one-ended word-hyperbolic group with $rank(G)=n$, and  the $(2n-1)$-tuples $(a_1,\ldots ,a_n,\underbrace{1,\ldots ,1}_{n-1 \text{ times}})$ and $(b_1,\ldots ,b_n,\underbrace{1,\ldots ,1}_{n-1 \text{ times}})$ are not Nielsen-equivalent in $G$.
\end{theor}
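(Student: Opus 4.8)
The plan is to combine small-cancellation and genericity estimates for the presentation $(*)$ with the Nielsen method for word-hyperbolic groups.

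\emph{First, the geometry of $G$.} Eliminating the $b_i$ from $(*)$ by the Tietze substitutions $b_i\mapsto v_i(\underline a)$ produces a presentation $G=\langle a_1,\dots,a_n\mid \tilde R_1,\dots,\tilde R_n\rangle$ with $\tilde R_i=x_i\,u_i\big(v_1(\underline x),\dots,v_n(\underline x)\big)^{-1}$, a word of length $\asymp\ell^2$, where $\ell$ denotes the common length of the $v_i$ and the $u_i$. A routine genericity argument shows that for a generic choice of the $2n$ words the cyclically reduced $\tilde R_i$ satisfy $C'(\lambda)$ for any prescribed $\lambda>0$ and are not proper powers, so that $G$ is torsion-free and word-hyperbolic; a further genericity argument in the spirit of \cite{KS} then gives that $G$ is one-ended with $rank(G)=n$. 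The same estimates yield the ``no short coincidences'' facts that will be used later: the only elements of $G$ of length at most $1$ in the generators $a_1^{\pm1},\dots,a_n^{\pm1}$ are $1$ and the pairwise distinct $a_i^{\pm1}$; every coordinate subgroup $\langle a_i:i\in I\rangle$ with $I\subsetneq\{1,\dots,n\}$ is a proper free subgroup of rank $|I|$; and no relation among the $a_i$ of length below a threshold $C(\ell)\to\infty$ holds in $G$ other than the obvious consequences of the $\tilde R_i$. The analogous statements hold for the $b_i$.

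\emph{Second, the core argument.} Write $\tau_a=(a_1,\dots,a_n,1^{n-1})$ and $\tau_b=(b_1,\dots,b_n,1^{n-1})$, so that Theorem~\ref{thm:A} is the assertion $\tau_a\not\sim_{NE}\tau_b$. Since $rank(G)=n$, a generating $(2n-1)$-tuple of $G$ has at least $n$ nontrivial coordinates, hence total length (in the generators $a_i^{\pm1}$) at least $n$, with equality only if it is a signed permutation of $(a_1,\dots,a_n)$ padded with $n-1$ trivial entries --- here we use the first two ``no short coincidences'' facts --- and such a tuple is Nielsen equivalent to $\tau_a$. Thus it is equivalent to show that $\tau_b$ is not Nielsen equivalent to any $(2n-1)$-tuple of total length $\le n$. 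Assume for contradiction that $\tau_a\sim_{NE}\tau_b$, i.e.\ that there is $\phi\in Aut(F_{2n-1})$ with $f\circ\phi=g$, where $f,g\colon F_{2n-1}\to G$ send $x_i\mapsto a_i$, resp.\ $x_i\mapsto b_i$, for $i\le n$ and $x_j\mapsto 1$ for $j>n$. A free-group automorphism carries no a priori length bound, so one first replaces $\phi$ by a geometrically efficient Nielsen chain: invoking the Nielsen method for hyperbolic groups underlying \cite{KW04}, adapted from the locally quasiconvex case with the help of the explicit $C'(\lambda)$-geometry (which keeps the relevant labelled graphs quasigeodesically embedded up to scale $\asymp\ell^2$), one obtains a chain $\tau_a=\tau^{(0)}\to\cdots\to\tau^{(m)}=\tau_b$ in which every term, as well as $m$, has complexity bounded by an explicit $C(n,\ell)$. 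Encoding this chain as a sequence of labelled graphs mapping to the presentation complex of $G$ and following how the length-$\ell$ loops carrying the words $v_i(\underline a)$ become the length-$1$ loops $a_i$, one sees that the defining relators $\tilde R_i$, of length $\asymp\ell^2$, must be applied; a van Kampen area and counting estimate then shows that this cannot be done while only $n-1$ of the $2n-1$ coordinates remain free, because the tuple has no room to carry the intermediate long words. This is exactly one coordinate short of the $2n$-coordinate situation in which $(a_1,\dots,a_n)$ and $(b_1,\dots,b_n)$ can be stored at once and the substitution trick of the Introduction applies. Finally the generic hypothesis is used once more, via a Greendlinger-type analysis of the bounded-complexity configurations that could support such a chain, to exclude the remaining possibility of a shortcut through a coincidental short relation among the $a_i$ or among the $b_i$. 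This contradiction proves $\tau_a\not\sim_{NE}\tau_b$.

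\emph{The main obstacle} is the second half of the core argument --- making quantitative the dichotomy ``$n-1$ free coordinates do not suffice, but $n$ do''. Two points need care. First, these groups are \emph{not} locally quasiconvex, so \cite{KW04} cannot be quoted verbatim; one must re-run the Nielsen-method reduction using the generic $C'(\lambda)$-structure so that distorted subgroups arising along the chain cannot be exploited. Second, one must interlock the resulting bounded-complexity combinatorics with a diagram argument strong enough to exclude every coincidental short relation in $G$, and it is here that the full force of the genericity is spent. By comparison the geometric input of the first paragraph and the elementary reduction at the start of the second are essentially routine.
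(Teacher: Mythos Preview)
Your first paragraph, on the geometry of $G$, is essentially what the paper does and is fine.

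The core argument, however, is not a proof; it is a wish list. The two load-bearing claims --- that one can extract from a putative Nielsen equivalence a chain $\tau^{(0)}\to\cdots\to\tau^{(m)}$ of complexity bounded by some $C(n,\ell)$, and that a ``van Kampen area and counting estimate'' then rules out such a chain with only $n-1$ spare coordinates --- are both unsubstantiated, and the first is precisely the difficulty. The result of \cite{KW04} bounds intermediate configurations only under local quasiconvexity, and that hypothesis is what does the work there: it is what keeps the subgroups generated by partial tuples from being distorted. You acknowledge that these $G$ are not locally quasiconvex, but ``re-run the Nielsen-method reduction using the generic $C'(\lambda)$-structure'' is not an argument; $C'(\lambda)$ on the ambient presentation says nothing about arbitrary $(2n-1)$-generated subgroups arising mid-chain. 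Likewise, ``only $n-1$ of the $2n-1$ coordinates remain free'' has no precise meaning along a Nielsen chain in $G$, and you never say what is being counted in the van Kampen estimate or why it fails at $n-1$ and succeeds at $n$. As written, the sketch could equally well ``prove'' that $n-2$ stabilisations do not suffice (which is already known) or that $n$ do not (which is false).

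The paper's route is quite different and never tries to bound a Nielsen chain in $G$. It works in $F(A)$: if $\tau_a\sim_{NE}\tau_b$ in $G$, there is a $(2n-1)$-tuple $(x_1,\dots,x_{2n-1})$ Nielsen equivalent \emph{in $F(A)$} to $(a_1,\dots,a_n,1,\dots,1)$ with $x_i=_G b_i$ for $i\le n$; one takes such a tuple minimising a tailor-made complexity $(c_1,c_2)$ built from decompositions of the $x_i$ into subwords of powers of the relators $U_j$. The wedge $\Gamma$ of circles labelled $x_i$ folds onto $R_n$; stop at the last intermediate graph $\Delta$ not containing a copy of $R_n$, so $b(\Delta)\le 2n-1$ and $\Delta$ contains a small subgraph $\Psi$ one fold away from $R_n$. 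Two substantial ingredients --- Theorem~\ref{notallpathslift} (path-surjective maps to $R_n$ from graphs of Betti number $\le 2n-1$ are $2$-sheeted covers) and Theorem~\ref{readingranndomwordsingraphs} (lifts of generic words to such graphs are $\alpha$-injective) --- then force a long relator-subword $Z$ inside some $x_{k+1}$ to run over an arc $Q'\subseteq\Delta\setminus\Psi$ exactly once. Replacing every preimage of $Q'$ in $\Gamma$ by the complementary word yields a new tuple that still generates $F(A)$ (since $\Psi\subseteq\Delta'$), still represents the $b_i$ in $G$, and has strictly smaller complexity (Lemma~\ref{reductionmove}), contradicting minimality. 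It is this surgery-on-folded-graphs mechanism, together with the $2$-sheeted-cover characterisation, that replaces the bounded-chain step you are missing.
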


The precise meaning of $\mathcal R$ being generic is explained in Section~\ref{S:generic}.  However, the main point of Theorem~\ref{thm:A} is to prove the existence of groups $G$ with the above properties. Genericity considerations are used as a tool for justifying existence of tuples $\tau$ such that $G$ given by presentation (*) satisfies the conclusion of Theorem~\ref{thm:A}.
In an earlier paper~\cite{KW} we proved, using rather different arguments from those deployed in the present article, a weaker related statement. Namely, we proved that if $G$ is given by  presentation (*) corresponding to a generic tuple $\tau=(v_1,\dots, v_n, u_1,\dots, u_n)$, then the $(n+1)$-tuples $(a_1,\dots,a_n,1)$ and  $(b_1,\dots,b_n,1)$ are not Nielsen equivalent in $G$.
In \cite{KW} we also conjectured that  Theorem~\ref{thm:A} should hold.

\smallskip It is in general quite difficult to distinguish Nielsen equivalence classes of tuples of elements. Even in the algorithmically nice setting of torsion-free
word-hyperbolic groups the problem of deciding if two tuples are
Nielsen equivalent is algorithmically undecidable.  Indeed, the subgroup membership problem is a special case of this problem, as
two tuples $(g_1,\ldots,g_n,h)$ and $(g_1,\ldots,g_n,1)$ are Nielsen
equivalent if and only if $h\in\langle g_1,\ldots, g_n\rangle$. Therefore  Nielsen equivalence  is not decidable for finitely
presented torsion-free small cancellation groups, since in general, by a result of Rips~\cite{R},  such groups have
undecidable subgroup membership problem. Only in the case of 2-tuples there is a test that works reasonably well for distinguishing Nielsen equivalence: If two pairs $(g_1,g_2)$ and $(h_1,h_2)$ are Nielsen equivalent in a group $G$, then, by a result of Nielsen \cite{N1}, the commutator $[g_1,g_2]$ is conjuate to $[h_1,h_2]$ or two $[h_1,h_2]^{-1}$. Thus having a solution to the conjugacy problem often gives a way to distinguish Nielsen classes of pairs. This test is particularly important in the theory of finite groups.

\smallskip As noted above, there are no general  methods that allow us to distinguish Nielsen classes. However, Lustig and Moriah \cite{LM1,LM2,LM3} and Evans \cite{E1,E2,E3} produced algebraic methods that work in many special situations. While the methods of Lustig and Moriah only work in the context of minimal generating tuples, the methods of Evans also work for non-minimal and stabilized generating tuples; see the discussion before the statement of Theorem~\ref{thm:A}.  The methods of Moriah and Lustig are $K$-theoretic in nature. The approach of Evans generalizes some linear algebra considerations in the context of solvable groups.  Our approach, sketched in the next section, is essentially geometric. Negatively curved features of groups $G$, given by presentation $(*)$ in Theorem~\ref{thm:A}, play a crucial role in the proof of this theorem.

We are grateful to the two referees of this paper for useful suggestions.

\section{Outline of the proof}

In this section we outline the idea of the proof of Theorem~\ref{thm:A}. First, we pass from presentation (*) of $G$ to the presentation
\[
G=\langle a_1,\dots, a_n| U_1,\dots, U_n\rangle \tag{**}
\]
where $U_i$ is obtained by freely and cyclically reducing the word $u_i(v_1,\dots,v_n)a_i^{-1}$ in $F(A)$, where $A=\{a_1,\dots,a_n\}$.  Genericity of (*) implies that presentation (**) satisfies an arbitrarily strong small cancellation condition $C'(\lambda)$, where $\lambda\in (0,1)$ is an arbitrarily small fixed number. 
Suppose that the $(2n-1)$-tuples $(a_1,\ldots ,a_n,\underbrace{1,\ldots ,1}_{n-1 \text{ times}})$ and $(b_1,\ldots ,b_n,\underbrace{1,\ldots ,1}_{n-1 \text{ times}})$ are Nielsen-equivalent in $G$.
Then in the free group $F(A)$ the $(2n-1)$-tuple $(a_1,\ldots ,a_n,\underbrace{1,\ldots ,1}_{n-1 \text{ times}})$  is Nielsen equivalent to a tuple of freely reduced $F(A)$-words $(x_1,\dots, x_{2n-1})$ such that $x_i=_G b_i$ for $i=1,\dots, n$.  Among all such tuples $(x_1,\dots, x_{2n-1})$ we choose one of minimal complexity, in the appropriate sense. 
We then consider the labelled graph $\Gamma$ given by a wedge of $2n-1$ loops labelled by the words $x_1,\dots, x_{2n-1}$. The fact that $(a_1,\ldots ,a_n,\underbrace{1,\ldots ,1}_{n-1 \text{ times}})\sim_{NE}(x_1,\dots, x_{2n-1})$ in $F(A)$ implies that the graph $\Gamma$ admits a finite sequence of Stallings folds taking it to the graph $R_n$ which is the wedge of $n$ loop-edges labelled $a_1,\dots, a_n$.

\begin{figure}[ht]
	\centering
	\scalebox{.7}{\input{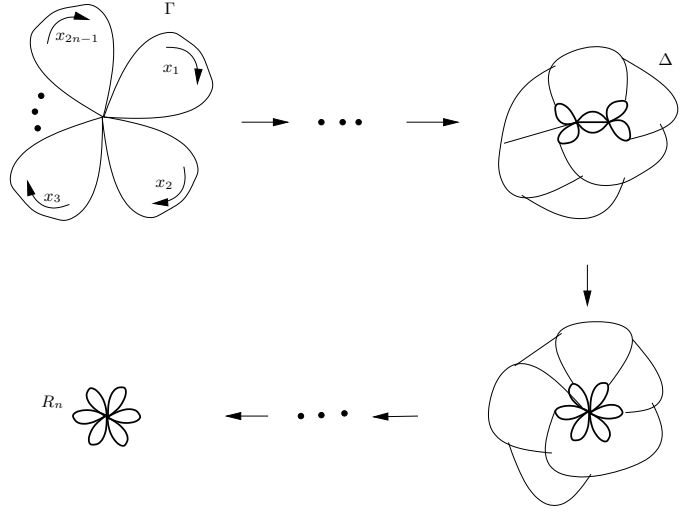}}
	\caption{$\Delta$ is the last graph in the folding sequence that doesn't contain a copy of $R_n$. $\Psi$ is the fat subgraph of $\Delta$.}
\end{figure} 

In this folding sequence we look at the last term $\Delta$ such that $\Delta$ does not contain a copy of $R_n$ as a subgraph. Then $\Gamma$ folds onto $\Delta$ and $\Delta$ contains a subgraph $\Psi$ consisting of $\le n+2$ edges such that $\Psi$ transforms into a graph containing $R_n$ by a single fold.  By construction, the first Betti number of $\Delta$ is $\le 2n-1$, which puts combinatorial constrains on the topology of $\Delta$ which we later exploit. The $F(A)$-words $x_1,\dots x_n$ are readable in $\Gamma$ and therefore they are also readable in~$\Delta$. 

\smallskip In Section~\ref{charrose} and Section~\ref{liftingrandompaths} we study how generic words can be read in labeled graphs of Betti number at most $2n-1$, in particular we show that a path reading such a word only travels few edges more than once unless the graph contains $R_n$ or a 2-sheeted cover of $R_n$, see Theorem~\ref{readingranndomwordsingraphs}. This gives us control over how generic words are read in~$\Delta$. 

\smallskip Using genericity and small cancellation theory considerations and the fact that $x_i=_G b_i$ for $1\le i\le n$, we conclude (see Section~\ref{S:generic}) that  generically one of the following holds: 
\begin{enumerate}
\item $x_i=v_i$ for $1\le i\le n$.
\item For some $i\in\{1,\dots, n\}$ the word $x_i$ contains a subword $Z$ which is almost all of a cyclic permutation $U_*$ of some $U_j^{\pm 1}$.
\end{enumerate}

In Section~\ref{proofmaintheorem} we show that both cases yield a contradiction to either the genericity of the presentation or the minimality assumption mentioned above.

\smallskip In case (1), genericity implies that for each $i$ the path $\gamma_{v_i}$ in $\Delta$, that reads the word $v_i$, runs over a long (subdivided) segment of $\Delta\backslash\Psi$ that is not visited by $\gamma_{v_j}$ for $j\neq i$. Therefore the Betti number of $\Delta/\Psi$ ist at least $n$, which is impossible as $b(\Delta)\le 2n-1$ and $b(\Psi)\ge n$.

\smallskip In case (2), the subword $Z$ in the label of the $i$-th petal of $\Gamma$ projects to a path $s_Z$ with label $Z$ in $\Delta$. Since words readable in $\Psi$ are highly non-generic, we exploit the genericity properties of the presentation to conclude that there is some "long" (comprising a definite portion of the length of $U_j$) arc $Q'$ in $\Delta\setminus \Psi$ with label $W$ such that $s_Z$ runs over $Q'$ exactly once. We then perform a surgery on $\Delta$ by replacing $Q'$ in $\Delta$ by an arc labelled by the word complimentary to $W$ in $U_*$ word $V$ (so that $U_*\equiv WV^{-1}$ in $F(A)$), to obtain a graph $\Delta'$.  Genericity considerations allow us to conclude that  the full pre-image of $Q'$ in $\Gamma$ consists of a disjoint collections of arcs labelled $W$ inside the petals of $\Gamma$. We replace each such arc by an arc labelled $V$  to obtain a graph $\Gamma'$ which  is a wedge of $2n-1$ circles labeled $z_1,\ldots,z_{2n-1}$. Let  $x_i'$ be the word obtained from $z_i$ by free reduction. By construction the graph $\Gamma'$ still folds onto $\Delta'$. Since $\Delta'$ still contains the subgraph $\Psi$, it follows that $\Delta'$ folds onto $R_n$ and hence $\Gamma'$ folds onto $R_n$ as well. Therefore $(x_1',\dots,x_{2n-1}')$  is Nielsen equivalent in $F(A)$ to $(a_1,\ldots ,a_n,\underbrace{1,\ldots ,1}_{n-1 \text{ times}})$.  We then use results established in Section~\ref{reductionmoves} to show that the tuple  $(x_1',\dots,x_{2n-1}')$ has strictly smaller complexity than the tuple $(x_1,\dots, x_{2n-1})$, yielding a contradiction to the minimal choice of $(x_1,\dots, x_{2n-1})$.

\section{Preliminaries}

\subsection{Conventions regarding graphs}

By a \emph{graph} $\Gamma$ we mean a 1-dimensional cell complex,
equipped with the weak topology. We
refer to $0$-cells of $\Gamma$ as \emph{vertices} and to open 1-cells of
$\Gamma$ as \emph{topological edges}. The set of all vertices of
$\Gamma$ is denoted $V\Gamma$ and the set of topological edges of
$\Gamma$ is denoted $E_{top}\Gamma$. For each topological edge
$\mathbf e$ of $\Gamma$ we fix a \emph{characteristic map}
$\tau_\mathbf e: [0,1]\to \Gamma$ which is the attaching map for
$\mathbf e$. Thus $\tau_\mathbf e$ is a continuous map whose
restriction to $(0,1)$ is a homeomorphism between $(0,1)$ and the open
1-cell $\mathbf e$, and the points $\tau_\mathbf e(0), \tau_\mathbf
e(1)$ are (not necessarily distinct) vertices of $\Gamma$.

Each topological edge $\mathbf e$ is homeomorphic to $(0,1)$ and thus,
being a connected 1-manifold, it admits exactly two possible orientations. An \emph{oriented edge} or
just \emph{edge} of $\Gamma$ is a topological edge together with the
choice of an orientation on it. We denote by $E\Gamma$ the set of all
oriented edges of $\Gamma$. If $e\in E\Gamma$ is an oriented edge, we
denote by $e^{-1}\in E\Gamma$ the same topological edge with the
opposite orientation. Thus ${}^{-1}:E\Gamma\to E\Gamma$ is a
fixed-point-free involution on $E\Gamma$. For every oriented  edge $e\in
E\Gamma$ the attaching map for the underlying topological edge
naturally defines the \emph{initial vertex} $\alpha(e)\in V\Gamma$ and
the \emph{terminal vertex} $\omega(e)\in V\Gamma$ of $\Gamma$. Note
that for every $e\in E\Gamma$ we have $\alpha(e)=\omega(e^{-1})$ and
$\omega(e)=\alpha(e^{-1})$.

For a vertex $v\in V\Gamma$ the \emph{degree} $\deg(v)$ of $v$ in
$\Gamma$ is the cardinality of the set $\{e\in E\Gamma| \alpha(e)=v\}$.

Let $\Gamma$ and $\Gamma'$ be graphs. A \emph{morphism} or a
\emph{graph-map} $f:\Gamma\to \Gamma'$ is a continuous map $f$ such
that $f(V\Gamma)\subseteq V\Gamma'$ and such that the restriction of
$f$ to any topological edge (that is, an open 1-cell) $\mathbf e$ of $\Gamma$ is a
homeomorphism between $\mathbf e$ and some topological edge $\mathbf
e'$ of $\Gamma'$. Thus a morphism $f:\Gamma\to \Gamma$ naturally
defines a function (still denoted by $f$) $f:E\Gamma\to
E\Gamma'$. Moreover, for any $e\in E\Gamma$ we have
$f(e^{-1})=(f(e))^{-1}\in E\Gamma'$ and $\alpha(f(e))=f(\alpha(e))$, $\omega(f(e))=f(\omega(e))$.

\subsection{Paths, coverings and path-surjective maps}\label{subsect:paths}

A \emph{combinatorial edge-path} or just an \emph{edge-path} of
\emph{length} $n\ge 1$ in a graph $\Gamma$ is a sequence $\gamma=e_1,\dots,
e_n$ where $e_i\in E\Gamma$ for $i=1,\dots, n$ and such that for all $1\le
i <n$ we have $\omega(e_i)=\alpha(e_{i+1})$.  We denote the length $n$
of $\gamma$ as $n=|\gamma|$ and also denote $\alpha(\gamma):=\alpha(e_1)$ and 
$\omega(\gamma):=\omega(e_n)$.
Also, for any vertex $v\in V\Gamma$ we regard $\gamma=v$ as an edge-path of
length $|\gamma|=0$ with $\alpha(\gamma)=\omega(\gamma)=v$. 
An edge-path $\gamma$ in $\Gamma$ is called \emph{reduced} 
if $\gamma$ does not contain a subpath of the form $e,e^{-1}$ where $e\in
E\Gamma$.

For a finite graph $\Gamma$ (not necessarily connected), we denote by
$b(\Gamma)$ the first betti number of $\Gamma$.
If $\Gamma$ is a graph and $\Gamma'\subseteq \Gamma$ is a subgraph, we
denote by $\Gamma/\Gamma'$ the graph obtained from $\Gamma$ by
collapsing every connected component of $\Gamma'$ to a vertex.

We will need the following elementary but useful lemma whose proof is
left to the reader:

\begin{lem}\label{lem:quotient}
Let $\Gamma$ be a finite graph and let $\Gamma'\subseteq \Gamma$ be a
subgraph (not necessarily connected).
 Then $b(\Gamma)=b(\Gamma')+b(\Gamma/\Gamma').$
\end{lem}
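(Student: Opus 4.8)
The plan is to prove the identity $b(\Gamma)=b(\Gamma')+b(\Gamma/\Gamma')$ by reducing to the connected case and then using the standard relationship between the first Betti number and the Euler characteristic for finite graphs. Recall that for a finite graph $\Delta$ with $c(\Delta)$ connected components one has $b(\Delta)=|E_{top}\Delta|-|V\Delta|+c(\Delta)$; equivalently $b(\Delta)=c(\Delta)-\chi(\Delta)$. So the first step is to record this formula (it is immediate by counting a spanning forest: a spanning tree of each component uses exactly $|V|-c$ edges, and each of the remaining $|E_{top}|-|V|+c$ edges contributes one independent loop).

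Next I would analyze the collapsing operation $\Gamma\mapsto\Gamma/\Gamma'$ at the level of these three quantities. Let $\Gamma'$ have connected components $C_1,\dots,C_k$. Collapsing each $C_j$ to a point removes all of the topological edges of $\Gamma'$ and identifies the vertices of each $C_j$ to a single new vertex, while leaving all edges and vertices of $\Gamma$ not in $\Gamma'$ untouched (edges of $\Gamma$ with an endpoint in some $C_j$ simply get that endpoint relocated to the corresponding collapsed vertex). Hence
\[
|E_{top}(\Gamma/\Gamma')|=|E_{top}\Gamma|-|E_{top}\Gamma'|,\qquad
|V(\Gamma/\Gamma')|=|V\Gamma|-|V\Gamma'|+k.
\]
For the component count, note that collapsing a connected subgraph does not change which components of the ambient graph are connected to which, so $c(\Gamma/\Gamma')=c(\Gamma)$; here I am using that each $C_j$ is connected, which is exactly why the components must be collapsed individually.

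Now I would just combine these. Using $b(\Delta)=|E_{top}\Delta|-|V\Delta|+c(\Delta)$ three times:
\[
b(\Gamma/\Gamma')=\bigl(|E_{top}\Gamma|-|E_{top}\Gamma'|\bigr)-\bigl(|V\Gamma|-|V\Gamma'|+k\bigr)+c(\Gamma),
\]
and since $b(\Gamma')=|E_{top}\Gamma'|-|V\Gamma'|+k$ (as $\Gamma'$ has $k$ components), adding gives
\[
b(\Gamma')+b(\Gamma/\Gamma')=|E_{top}\Gamma|-|V\Gamma|+c(\Gamma)=b(\Gamma),
\]
as desired.

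The only subtle point — and the one I would be most careful about — is the component-count bookkeeping: it is essential that $\Gamma'$ be collapsed component by component (so that $c(\Gamma/\Gamma')=c(\Gamma)$ and the vertex count drops by $|V\Gamma'|-k$ rather than $|V\Gamma'|-1$). A cleaner, essentially equivalent route that sidesteps even this is to observe that the quotient map $\Gamma\to\Gamma/\Gamma'$ is a homotopy equivalence on each component precisely because each component of $\Gamma'$ is a connected subcomplex and hence contractible-relative or, more to the point, because collapsing a connected subgraph of a CW-complex is a homotopy equivalence; then $b$ is a homotopy invariant and one reduces to the additivity of Betti numbers over the cofiber sequence — but for a short self-contained argument the Euler-characteristic counting above is the most transparent, so that is the write-up I would give.
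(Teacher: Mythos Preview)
Your Euler-characteristic argument is correct and is exactly the sort of elementary count the authors had in mind: the paper explicitly leaves this lemma to the reader, so there is no proof to compare against, and your computation via $b(\Delta)=|E_{top}\Delta|-|V\Delta|+c(\Delta)$ together with the edge/vertex/component bookkeeping under collapsing is the standard route.

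One small caution: in your final paragraph you remark that ``collapsing a connected subgraph of a CW-complex is a homotopy equivalence.'' That is false in general---it holds only when the subgraph is contractible (a tree), which is not assumed here. Since you do not actually use this claim and correctly fall back on the Euler-characteristic count, it does no damage to your proof, but you should drop that aside from the write-up.
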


If $\Gamma$ is a connected non-contractible graph, the \emph{core} of
$\Gamma$, denoted by $Core(\Gamma)$, is the smallest connected subgraph $\Gamma_0$ of $\Gamma$
such that the inclusion map $\iota: \Gamma_0\to\Gamma$ is a homotopy
equivalence. A connected graph $\Gamma$ is called a \emph{core graph}
if $\Gamma=Core(\Gamma)$. Note that if $\Gamma$ is a finite connected
non-contractible graph then $\Gamma$ is a core graph if and only if for
every $v\in V\Gamma$ we have $\deg(v)\ge 2$. If $\Gamma$  is a graph and $v\in V\Gamma$ a vertex then we call the pair $(\Gamma,v)$ a \emph{core pair} or a \emph{core graph with respect to $v$} if $\Gamma$ contains no proper subgraph containing $v$ such that the inclusion map is a homotopy equivalence. Thus if $(\Gamma,v)$ is a core graph then either $\Gamma$ is a core graph or $\Gamma$ is obtained from a core graph $\bar\Gamma$ by adding segment joining $v$ and $\bar\Gamma$.

Note that if $f:\Delta\to\Gamma$ is a morphism of graphs and if
$\beta=e_1,\dots, e_n$ is an edge-path in $\Delta$ of length $n>0$ then
$f(\beta):=f(e_1),\dots f(e_n)$ is an edge-path in $\Gamma$ with
$|\gamma|=|\beta|$. In this case we call $\beta$ a \emph{lift} of
$f(\beta)$ to $\Delta$. Also, if $u\in V\Delta$ and $v=f(u)\in V\Gamma$
then we call the length-$0$ path $\beta=u$ a \emph{lift} of the
length-$0$ path $\gamma=v$.

It is not hard to see that a graph morphism $f:\Delta\to\Gamma$ is a
covering map (in the standard topological sense) if and only if $f$ is
surjective and locally bijective.
It follows from the definitions that a surjective morphism
$f:\Delta\to\Gamma$ is a covering map if and only if for every vertex
$v\in V\Gamma$, every edge-path $\gamma$ in $\Gamma$ with
$\alpha(\gamma)=v$ and every vertex $u\in V\Delta$ with $f(u)=v$ there
exists a unique lift $\widetilde\gamma$ of $\gamma$ such that
$\alpha(\widetilde\gamma)=u$.

We will need to investigate the following weaker version of being a
covering map:

\begin{defn}[Path-surjective map]
Let $f:\Delta\to\Gamma$ be a surjective morphism of graphs. We say
that $f$ is \emph{path-surjective} if for every reduced edge-path
$\gamma$ in $\Gamma$ there exists a lift $\widetilde\gamma$ of $\gamma$ in $\Delta$.
\end{defn}

Let $\Delta$ be a finite  graph which contains at least one edge. A subgraph $\Lambda$ of $\Delta$ is an \emph{arc} if $\Delta$ is the image of a simple (possibly closed) edge-path $\gamma=e_1,\dots, e_n$, where $n\ge 1$, such that for all $1\le i <n$ the vertex $\omega(e_i)$ has degree $2$ in $\Delta$.  If, in addition, $\deg_\Delta(\alpha(e_1))\ne 2$ and $\deg_\Delta(\omega(e_n))\ne 2$, then $\Lambda$ is called a \emph{maximal arc} in $\Delta$. An arc $\Lambda$ in $\Delta$ is \emph{semi-maximal} if for the simple path  $\gamma=e_1,\dots, e_n$ defining $\Lambda$ either $\deg_\Delta(\alpha(e_1))\ne 2, \deg_\Delta(\omega(e_n))=2$ or $\deg_\Delta(\alpha(e_1))=2, \deg_\Delta(\omega(e_n))\ne 2$. 

If $\Delta$ is a finite connected graph with at least one edge and such that $\Delta$ is not a simplicially subdivided circle then every edge of $\Delta$ is contained in a unique maximal arc of $\Delta$. Moreover in this case for every  arc $\Lambda$ of $\Delta$ the simple edge-path $\gamma=e_1,\dots, e_n$, whose image is equal to $\Lambda$, is unique up to inversion and the set of its endpoints $\{\alpha(e_1), \omega(e_n)\}$ is uniquely determined by $\Lambda$.

\section{A characterization of 2-sheeted covers of a rose}\label{charrose}

\begin{defn}[Standard $n$-rose]
\smallskip For an integer $n\ge 1$ we denote by $R_n$ the rose with $n$ petals, i.e. the graph with a single vertex $v_0$ and edge set $ER_n=\{e_1^{\pm 1},\ldots ,e_n^{\pm 1}\}$. For $0\le m<n$ we consider $R_m$ as a subgraph of $R_n$ in the obvious way. 
\end{defn}

In this paper we will utilize some of the basic results about Stallings folds. We refer the reader to \cite{St,KM} for the relevant background information.

It is easy to explicitly characterize all the connected 2-fold covers of $R_n$.
Namely, a morphism $p:\Gamma\to R_n$ (where $\Gamma$ is a connected
graph) is a 2-fold cover if and only if $\Gamma$ has two distinct
vertices $v$ and $w$ and  for each $i=1,\ldots ,n$ there either exist loop edges at $v$ and $w$ that map to $e_i$ or there exist two edges from $v$ to $w$ that map to $e_i$ and $e_i^{-1}$, respectively. The connectedness of $\Gamma$ implies that for at least one $i$ the second option occurs.

\begin{figure}[h]
	\centering
	\input{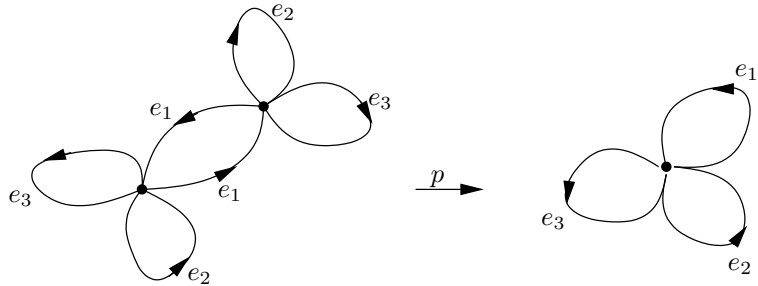}
	\caption{A 2-sheeted cover of $R_3$}
	\label{fig2sheeted}
\end{figure}

Note that if $p:\Gamma\to R_n$ is a morphism of graphs then
$p(V\Gamma)=\{v_0\}$ since $R_n$ is a graph with a single vertex
$v_0$. Thus any such morphism $p:\Gamma\to R_n$ can be uniquely
encoded by specifying for every edge $e\in E\Gamma$ its ``label''
$p(e)\in ER_n=\{e_1^{\pm 1}, \dots, e_n^{\pm 1}\}$. Moreover, a
labelling of $E\Gamma$ by elements of $ER_n$ determines a morphism if
and only if this labeling respects edge inversion. 

\smallskip The main purpose of this section is to characterize 2-fold covers of
$R_n$ in terms of path-surjective maps:

\begin{thm}\label{notallpathslift} Let $\Gamma$ be a connected finite core graph, $n\ge 2$ and $p:\Gamma\to R_n$ be a path surjective morphism such that the following hold:
\begin{enumerate}
\item $R_n$ does not lift to $\Gamma$, i.e. there exists no morphism $f:R_n\to\Gamma$ such that $p\circ f=id_{R_n}$.
\item $b(\Gamma)\le 2n-1$.
\end{enumerate}
Then $p$ is a 2-sheeted covering map.
\end{thm}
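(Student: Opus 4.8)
The conclusion splits into showing that $p$ is a covering map at all, and then that its degree is exactly $2$; the latter is just bookkeeping, so the plan is to concentrate on the former. Indeed, a connected covering of $R_n$ of degree $d$ has exactly $d$ vertices and is $2n$-regular, hence has first Betti number $nd-d+1=d(n-1)+1$; so once $p$ is known to be a covering, hypothesis (2) forces $d\le 2$ (as $n\ge 2$) and hypothesis (1) forbids $d=1$ (the degree-one cover being $R_n$ itself, which admits the identity section), leaving $d=2$. Thus it suffices to prove that $p$ is locally bijective at every vertex of $\Gamma$.

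The key input is a growth estimate. Since $p$ is path-surjective, every reduced word of length $t$ over $\{e_1^{\pm1},\dots,e_n^{\pm1}\}$ is the label of a reduced edge-path of length $t$ in $\Gamma$ (a lift of a reduced path of $R_n$ is automatically reduced, since a backtrack in the lift would project to a backtrack in $R_n$). Hence the number of reduced edge-paths of length $t$ in $\Gamma$ is at least $2n(2n-1)^{t-1}$, the number of reduced words of length $t$. On the other hand this number equals $\mathbf 1^{\mathrm T}B^{t-1}\mathbf 1$, where $B$ is the non-backtracking edge-transition matrix of $\Gamma$, whose $e$-th row sum is $\deg(\omega(e))-1$. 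When $p$ is \emph{folded} (locally injective) every vertex has degree $\le 2n$, so all row sums are $\le 2n-1$, and a Perron--Frobenius argument for the non-backtracking operator of a finite connected core graph shows that this count grows at rate $\le 2n-1$, with rate exactly $2n-1$ only when $\Gamma$ is $2n$-regular. Comparing with the lower bound as $t\to\infty$ therefore forces $\Gamma$ to be $2n$-regular, i.e.\ $p$ is a covering. This settles the folded case.

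In general, fold $\Gamma$ down: a maximal sequence of Stallings folds $\Gamma=\Gamma_0\to\cdots\to\Gamma_k=\bar\Gamma$ with $\bar\Gamma$ folded. Every $\Gamma_i$ is a finite connected core graph, $b$ never increases along the sequence, and path-surjectivity is inherited by $\bar\Gamma$ (the image under a fold of a reduced lift of a reduced path of $R_n$ is again a reduced lift). By the previous paragraph $\bar\Gamma\to R_n$ is a covering of some degree $d$, and $d(n-1)+1=b(\bar\Gamma)\le b(\Gamma)\le 2n-1$ gives $d\le 2$. It remains to show the folding map $\Gamma\to\bar\Gamma$ is an isomorphism; then $\Gamma=\bar\Gamma$ is a $2$-sheeted cover, and we are done.

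This last point is the hard part, because hypothesis (1) is not obviously preserved by folding, so one cannot simply re-apply the folded case to $\Gamma$ itself. The plan to finish: if $d=2$ then $b(\bar\Gamma)=2n-1=b(\Gamma)$, so every fold in the sequence preserves the first Betti number and hence is a vertex-identifying fold; if any fold occurred, then $\Gamma$ has at least three vertices and, since $2n-1<d'(n-1)+1$ for all $d'\ge 3$, $\Gamma$ is not itself a covering — so the growth estimate above, now applied to the (possibly non-folded) $\Gamma$, produces a vertex of degree $\ge 2n+1$, while $\sum_v(\deg v-2)=2\bigl(b(\Gamma)-1\bigr)\le 4n-4$ bounds all remaining branching; this rigidity should let one write down a reduced word readable from no vertex of $\Gamma$, contradicting path-surjectivity. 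If $d=1$, so $\bar\Gamma=R_n$, one instead shows that a path-surjective finite connected core graph folding onto $R_n$ must contain a vertex carrying loops of all $n$ labels — an "$R_n$-complete" vertex — whence $R_n$ lifts to $\Gamma$, contradicting (1); here one uses that path-surjectivity applied to the words $e_i^{N}$ yields an $e_i$-cycle in $\Gamma$ for every $i$, and the Betti budget is what forces these $n$ cycles to meet. Either way no fold is needed, $\Gamma=\bar\Gamma$, and $p$ is a $2$-sheeted covering.
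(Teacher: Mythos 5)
Your Perron--Frobenius argument is a nice and, as far as it goes, correct observation: if $p$ is \emph{folded}, then every vertex has degree $\le 2n$, the non-backtracking transfer operator $B$ of the connected core graph $\Gamma$ (which has $b(\Gamma)\ge 2$, hence is not a cycle, so $B$ is irreducible) has spectral radius $\le 2n-1$, and path-surjectivity forces $\mathbf{1}^{\mathrm T}B^{t-1}\mathbf{1}\ge 2n(2n-1)^{t-1}$, hence $\rho(B)=2n-1$, hence (Perron--Frobenius for irreducible matrices) all row sums equal $2n-1$, hence $\Gamma$ is $2n$-regular and $p$ is a covering; the Betti bound and hypothesis (1) then pin down the degree to $2$. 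This is genuinely different from the paper's route, which makes no use of growth rates at all and instead carries out an induction on the number of petals with an explicit case analysis (Proposition~\ref{prop:k=2} and the inductive step in the proof of the theorem).

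However, the theorem does not assume $p$ is folded, and that is precisely where the difficulty --- and essentially all of the paper's effort --- lies. Your plan to reduce to the folded case has a genuine gap that you yourself half-acknowledge. After folding $\Gamma\to\bar\Gamma$, you correctly note that $\bar\Gamma\to R_n$ is a $d$-sheeted cover with $d\le 2$, but you then need to rule out that any fold happened, and at this point the argument stops being a proof. In the case $d=2$ you write that the degree and Euler-characteristic constraints ``should let one write down a reduced word readable from no vertex of $\Gamma$'' --- this is exactly the kind of claim that requires a detailed combinatorial analysis, and you do not give it; it is not obvious that the rigidity you list suffices, and it is precisely the sort of thing the paper's lemmas establish case by case. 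In the case $d=1$ you assert that ``a path-surjective finite connected core graph folding onto $R_n$ must contain a vertex carrying loops of all $n$ labels''; this is the heart of the matter, not a remark, and your one-sentence justification (``the Betti budget is what forces these $n$ cycles to meet'') does not show why the cycles must meet at a \emph{single common} vertex rather than pairwise at different vertices, nor why having loops at such a vertex is forced rather than, say, a non-loop edge of the same label. Concretely, the paper's Subcases~A--D in Proposition~\ref{prop:k=2} and the subsequent induction are exactly the content you would need to supply here; until you do, the non-folded case --- which is the only case the theorem is really about --- remains unproved.
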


Note that it is easy to find a finite connected core graph $\Gamma$ and a path-surjective map $p:\Gamma\to R_n$ such that $\Gamma$ does not contain a finite-sheeted cover of $R_n$. Indeed taking $\Gamma$ to be rose with $(2n)(2n-1)$ loops of length $2$ and choosing $f$ to be the map that maps the $(2n)(2n-1)$ loops of $\Gamma$ to the $(2n)(2n-1)$ distinct reduced paths of length $2$ in $R_n$ does the trick. 
Thus the hypothesis on the Betti number in Theorem~\ref{notallpathslift} cannot be dropped. However, we do not know whether $2n-1$ is the best possible lower bound.

\begin{conv}
For the remainder of this section we assume that $n\ge 2$ and  $p:\Gamma\to R_n$ is as in Theorem~\ref{notallpathslift}.
For any nonempty subset $S\subset ER_n$ let $\Gamma_S$ be the subgraph of $\Gamma$ formed by the edges that get mapped by $p$ to some edge in $S\cup S^{-1}$. For $S=\{e\}$ we will write $\Gamma_e$ instead of $\Gamma_{\{e\}}$.  Note that for every $e\in ER_n$ we have $\Gamma_e=\Gamma_{e^{-1}}=\Gamma_{\{e, e^{-1}\}}$.
\end{conv}

\begin{lem}\label{lem:b}
The following hold:
\begin{enumerate}
\item For every $e\in \{e_1,\dots, e_n\}$ we have $b(\Gamma_e)\ge 1$.
\item There exists $e\in \{e_1,\dots, e_n\}$ such that $b(\Gamma_e)=1$.
\end{enumerate}

\end{lem}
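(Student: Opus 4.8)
The plan is to extract both statements from the two hypotheses on $p\colon\Gamma\to R_n$ together with the definition of the subgraphs $\Gamma_e$. For part (1), fix $e\in\{e_1,\dots,e_n\}$ and suppose for contradiction that $b(\Gamma_e)=0$, so every connected component of $\Gamma_e$ is a tree. Since $p$ is path-surjective, the reduced loop $e$ (of length $1$) at $v_0$ in $R_n$ lifts to a reduced edge-path $\widetilde e$ in $\Gamma$; this lift is a single edge mapping to $e$, hence an edge of $\Gamma_e$. Running the loop $e^k$ for arbitrarily large $k$ and lifting, I obtain arbitrarily long reduced edge-paths inside $\Gamma_e$ all of whose edges map to $e^{\pm1}$. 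In a finite forest there is a bound on the length of a reduced edge-path, so some lift must fail to be reduced, i.e. it must backtrack. But a backtrack $f,f^{-1}$ in a lift of $e,e,\dots,e$ forces two consecutive letters $e,e^{-1}$ in the path $e^k$ downstairs, which is absurd since $e^k$ is reduced. Hence $\Gamma_e$ contains an embedded reduced circuit, so $b(\Gamma_e)\ge 1$. (Equivalently: $\Gamma_e$ path-surjects, via $p$, onto the subrose $R_1=\langle e\rangle$, and a graph that path-surjects onto a circle cannot be a forest.)

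For part (2), I argue by counting first Betti numbers using Lemma~\ref{lem:quotient} repeatedly. Order the petals $e_1,\dots,e_n$ and set $\Gamma_{\le k}:=\Gamma_{\{e_1,\dots,e_k\}}$, so $\Gamma_{\le 0}=V\Gamma$ (just the vertex set of $\Gamma$, which has $b=0$ since $\Gamma$ is connected — more precisely I take $\Gamma_{\le 0}$ to be the spanning subgraph with no edges) and $\Gamma_{\le n}=\Gamma$. Passing from $\Gamma_{\le k-1}$ to $\Gamma_{\le k}$ adjoins exactly the edges labelled $e_k^{\pm1}$. Using Lemma~\ref{lem:quotient} with the subgraph $\Gamma_{\le k-1}\subseteq\Gamma_{\le k}$ gives
\[
b(\Gamma_{\le k}) \;=\; b(\Gamma_{\le k-1}) + b\bigl(\Gamma_{\le k}/\Gamma_{\le k-1}\bigr),
\]
and $\Gamma_{\le k}/\Gamma_{\le k-1}$ is obtained from a one-vertex-per-component graph by adding the $e_k$-edges, so $b(\Gamma_{\le k}/\Gamma_{\le k-1})\ge b(\Gamma_{e_k})$ (collapsing can only merge the endpoints of the $e_k$-edges together, never decrease the rank contributed by them; in fact one checks this quantity equals $b(\Gamma_{e_k})$ plus the number of extra independent cycles created by identifications, so it is $\ge b(\Gamma_{e_k})$). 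Summing over $k=1,\dots,n$ yields
\[
2n-1 \;\ge\; b(\Gamma) \;=\; \sum_{k=1}^{n} b\bigl(\Gamma_{\le k}/\Gamma_{\le k-1}\bigr) \;\ge\; \sum_{k=1}^{n} b(\Gamma_{e_k}).
\]
By part (1) each term $b(\Gamma_{e_k})\ge 1$, so the right-hand sum is $\ge n$; if every term were $\ge 2$ the sum would be $\ge 2n>2n-1$, a contradiction. Hence $b(\Gamma_{e_k})=1$ for at least one $k$, which is (2).

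The only genuinely delicate point is justifying the inequality $b(\Gamma_{\le k}/\Gamma_{\le k-1})\ge b(\Gamma_{e_k})$ cleanly, since $\Gamma_{e_k}$ sits inside $\Gamma_{\le k}$ but its endpoints may already be connected through $\Gamma_{\le k-1}$; the safe route is to note that $\Gamma_{\le k}/\Gamma_{\le k-1}$ is precisely a quotient of $\Gamma_{e_k}$ by an edge-free identification of some of its vertices, together with extra isolated vertices, and any vertex identification on a finite graph does not decrease the first Betti number (again an instance of Lemma~\ref{lem:quotient}, or a direct Euler-characteristic check). Alternatively, one can avoid the filtration entirely: the disjoint union $\bigsqcup_{i=1}^n \Gamma_{e_i}$ maps to $\Gamma$ by a surjective morphism (the $\Gamma_{e_i}$ cover all edges of $\Gamma$), and such a map is $\pi_1$-surjective after passing to cores, giving $b(\Gamma)\le\sum_i b(\Gamma_{e_i})$ directly; combined with part (1) and hypothesis (2) this finishes (2) as above. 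I would present the filtration version since it is the most self-contained given the lemmas already available.
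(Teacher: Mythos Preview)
Your proof is correct and follows essentially the same approach as the paper. For part~(1) both you and the paper lift a long power $e^k$ (the paper takes $k=|E\Gamma|+1$) and use pigeonhole to find a repeated edge, hence a circuit in $\Gamma_e$; for part~(2) both arguments boil down to the inequality $b(\Gamma)\ge\sum_{i=1}^n b(\Gamma_{e_i})$ combined with the bound $b(\Gamma)\le 2n-1$, the only difference being that the paper simply asserts this inequality from $\Gamma=\bigcup_i\Gamma_{e_i}$ (with pairwise disjoint edge sets) while you supply a careful justification via the filtration $\Gamma_{\le k}$ and Lemma~\ref{lem:quotient}---your extra care here is warranted and the argument is clean.
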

\begin{proof}
To see that (1) holds let $e\in ER_n$ be arbitrary. Put $k=|E\Gamma|+1$ and consider the path $\gamma:=e^k=\underbrace{e,\dots ,e}_{k \text{ times}}$ in $R_n$. Since $p$ is  path-surjective,  there exists a lift $\widetilde\gamma$ of $\gamma$ to $\Gamma$, and, by definition of $\Gamma_e$, this path $\widetilde\gamma$ is contained in $\Gamma_e$. By the choice of $k$ the path  $\widetilde\gamma$ contains at least two occurrences of the same oriented edge, and hence $\widetilde\gamma$ contains a simple circuit embedded in $\Gamma_e$. Therefore $b(\Gamma_e)\ge 1$, as claimed. Thus part (1) of the lemma is established.

Suppose now that part (2) fails. By part (1)  this means that $b(\Gamma_e)\ge 2$ for every $e\in \{e_1,\dots, e_n\}$. Since $\Gamma=\cup_{i=1}^n \Gamma_{e_i}$, this implies that $b(\Gamma)\ge 2n$, contrary to the assumptions about $\Gamma$ in Theorem~\ref{notallpathslift}. Thus part (2) of the lemma also holds.
\end{proof}

In view of Lemma~\ref{lem:b}, after possibly permuting the indexing of the edges of $R_n$ we may assume that $b(\Gamma_{e_1})=1$.

\begin{lem}
There exists a permutation of $\{e_1,\dots, e_n\}$ such that, after this permutation, for all $k\in\{1,\ldots,n\}$ we have $$b(\Gamma_{S_k})\le 2k-1$$ where $S_k=\{e_1,\ldots ,e_k\}$.
\end{lem}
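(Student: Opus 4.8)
The plan is to build the permutation greedily, one index at a time, using Lemma~\ref{lem:b} and Lemma~\ref{lem:quotient} as the main tools. We already know, after relabelling, that $b(\Gamma_{e_1})=1$, so the case $k=1$ is settled and we take $S_1=\{e_1\}$. Suppose inductively that we have reindexed $e_1,\dots,e_k$ so that $b(\Gamma_{S_j})\le 2j-1$ for all $j\le k$, where $S_j=\{e_1,\dots,e_j\}$. We want to choose the next edge $e_{k+1}$ from among the remaining edges $\{e_{k+1},\dots,e_n\}$ so that $b(\Gamma_{S_{k+1}})\le 2(k+1)-1=2k+1$.

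The key step is a counting argument. For each remaining edge $e\in\{e_{k+1},\dots,e_n\}$, consider the subgraph $\Gamma_{S_k\cup\{e\}}=\Gamma_{S_k}\cup\Gamma_e$. Applying Lemma~\ref{lem:quotient} with the ambient graph $\Gamma_{S_k\cup\{e\}}$ and the subgraph $\Gamma_{S_k}$, we get
\[
b(\Gamma_{S_k\cup\{e\}})=b(\Gamma_{S_k})+b\big(\Gamma_{S_k\cup\{e\}}/\Gamma_{S_k}\big).
\]
Now $\Gamma_{S_k\cup\{e\}}/\Gamma_{S_k}$ is obtained from $\Gamma_e$ by collapsing each component of $\Gamma_e\cap\Gamma_{S_k}$; collapsing a subgraph can only decrease the first Betti number, so $b(\Gamma_{S_k\cup\{e\}}/\Gamma_{S_k})\le b(\Gamma_e)$. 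Hence for \emph{every} remaining $e$ we have $b(\Gamma_{S_k\cup\{e\}})\le b(\Gamma_{S_k})+b(\Gamma_e)\le (2k-1)+b(\Gamma_e)$. So it suffices to find a remaining edge $e$ with $b(\Gamma_e)\le 2$. Suppose not: then $b(\Gamma_e)\ge 3$ for all $e\in\{e_{k+1},\dots,e_n\}$. Combining this with $b(\Gamma_{e_j})\ge 1$ for $j\le k$ (part (1) of Lemma~\ref{lem:b}) and the fact that $\Gamma=\bigcup_{i=1}^n\Gamma_{e_i}$, an inductive application of Lemma~\ref{lem:quotient} — or simply the general inequality $b(A\cup B)\le b(A)+b(B)$ used repeatedly — would give $b(\Gamma)\ge k\cdot 1+(n-k)\cdot 3$, and since $n-k\ge 1$ and $k\ge 1$ this is at least $n-k+k+2(n-k)=n+2(n-k)\ge n+2$; more crudely, $k+3(n-k)=3n-2k\ge 3n-2(n-1)=n+2$ once $k\le n-1$. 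For $n\ge 2$ this already exceeds $2n-1$ as soon as $n\ge 3$, and a slightly more careful bookkeeping (using that at least one $\Gamma_{e_j}$ with $j\le k$ may be reused along shared edges) handles the remaining small cases; in any event it contradicts $b(\Gamma)\le 2n-1$. Therefore a remaining edge $e$ with $b(\Gamma_e)\le 2$ exists; relabel it $e_{k+1}$ and set $S_{k+1}=S_k\cup\{e_{k+1}\}$. Then $b(\Gamma_{S_{k+1}})\le (2k-1)+2=2k+1$, completing the inductive step.

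I expect the main obstacle to be making the "leftover edges have large Betti number forces $b(\Gamma)$ too big" counting completely tight, because the subgraphs $\Gamma_{e_i}$ overlap and naive additivity over-counts; one must instead iterate Lemma~\ref{lem:quotient} along the chain $\Gamma_{S_1}\subset\Gamma_{S_2}\subset\dots\subset\Gamma_{S_n}=\Gamma$, noting at each stage that the quotient $\Gamma_{S_{j+1}}/\Gamma_{S_j}$ contributes at least as much Betti number as would a fresh copy of $\Gamma_{e_{j+1}}$ minus its intersection with $\Gamma_{S_j}$, and then observing that the intersection can absorb at most finitely much. The cleanest route is probably to run the greedy selection and the contradiction simultaneously: at each stage pick the remaining edge $e$ minimizing $b(\Gamma_e)$; if this minimum is $\ge 3$ then all remaining edges have Betti number $\ge 3$, and telescoping $b(\Gamma)=\sum_{j=1}^{n} b\big(\Gamma_{S_j}/\Gamma_{S_{j-1}}\big)$ together with the elementary observation that $b(\Gamma_{S_j}/\Gamma_{S_{j-1}})\ge b(\Gamma_{e_j})-(\text{number of components of }\Gamma_{e_j}\cap\Gamma_{S_{j-1}})+1$ is awkward; it is easier to note directly that each newly added edge-set $\Gamma_{e_j}$ with $b(\Gamma_{e_j})\ge 3$ forces $b(\Gamma_{S_j})-b(\Gamma_{S_{j-1}})$ to be at least... — here the safe and sufficient claim is just $b(\Gamma_{S_j})\ge b(\Gamma_{S_{j-1}})$, which is immediate, so instead one argues: if every remaining edge had $b(\Gamma_e)\ge 3$ we could have chosen the \emph{first} edge to be one of them, but $b(\Gamma_{e_1})=1<3$, and symmetrically at no stage can all remaining ones be $\ge 3$ without the total $\sum b(\Gamma_{e_i})\ge 1+3(n-1)=3n-2>2n-1$ for $n\ge 2$ contradicting that $\sum_i b(\Gamma_{e_i})\ge b(\Gamma)$ is false in general — so the correct bound to use is $b(\Gamma)\le 2n-1$ against the lower bound coming from disjointifying the cycles, which needs the observation that $\sum_{i=1}^n b(\Gamma_{e_i})\le b(\Gamma)+(\text{overlap correction})$; getting this correction term right (it is governed by how the $\Gamma_{e_i}$ share vertices, not edges, since the $\Gamma_{e_i}$ are edge-disjoint) is the real content. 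Once the edge-disjointness of the $\Gamma_{e_i}$ is used — they partition $E\Gamma$ — a clean version of Lemma~\ref{lem:quotient} applied to the partition yields $b(\Gamma)\ge \sum_{i=1}^n b(\Gamma_{e_i})-(n-1)$ (losing at most one for each merge of components), whence $\sum_i b(\Gamma_{e_i})\le b(\Gamma)+(n-1)\le 3n-2$, and if all but possibly $e_1$ had Betti number $\ge 3$ while $b(\Gamma_{e_1})\ge 1$ we would need $3(n-1)+1\le 3n-2$, i.e. $3n-2\le 3n-2$, which is tight and does not yet contradict — so one must squeeze a little harder, e.g. using part (2) of Lemma~\ref{lem:b} applied not just once but to each residual graph $\Gamma_{S_k^c}$, which is exactly the inductive mechanism above. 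This bookkeeping subtlety is the step I would be most careful about; everything else is a routine application of Lemmas~\ref{lem:quotient} and~\ref{lem:b}.
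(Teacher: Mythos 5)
The central step in your proposal is the inequality
\[
b\bigl(\Gamma_{S_k\cup\{e\}}\bigr)\;\le\; b(\Gamma_{S_k})+b(\Gamma_e),
\]
which you justify by saying that $\Gamma_{S_k\cup\{e\}}/\Gamma_{S_k}$ is obtained from $\Gamma_e$ by ``collapsing a subgraph,'' and collapsing only decreases Betti number. That justification is wrong, and the inequality is false. The quotient $\Gamma_{S_k\cup\{e\}}/\Gamma_{S_k}$ is not obtained by collapsing a connected subgraph \emph{of} $\Gamma_e$: since $\Gamma_e\cap\Gamma_{S_k}$ consists only of isolated vertices, the operation actually performed on $\Gamma_e$ is to \emph{identify} any two vertices of $\Gamma_e$ that happen to lie in the same component of $\Gamma_{S_k}$, where that component may run far outside $\Gamma_e$. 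Identifying two vertices of a connected graph raises the Betti number by one. Concretely, let $\Gamma_e$ be a circuit of length four with vertices $A,B,C,D$, and let $\Gamma_{S_k}$ be a single edge (in some other letter) joining $A$ to $C$; then $b(\Gamma_e)=1$ and $b(\Gamma_{S_k})=0$, but $b(\Gamma_{S_k}\cup\Gamma_e)=2$. So choosing $e$ to minimize $b(\Gamma_e)$ controls nothing, and the greedy step does not go through.

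The quantity you should be minimizing is precisely the one the paper minimizes: $b\bigl(\Gamma_{S_{k-1}\cup\{e\}}/\Gamma_{S_{k-1}}\bigr)$, the actual increment guaranteed by Lemma~\ref{lem:quotient}. After choosing $e_k$ to realize this minimum, either the minimum is $\le 2$ (done), or for \emph{every} remaining index $i$ the subgraph $\Gamma_{S_{k-1}^i}/\Gamma_{S_{k-1}}$ of $\Gamma/\Gamma_{S_{k-1}}$ has Betti number $\ge 3$. These subgraphs are pairwise edge-disjoint inside $\Gamma/\Gamma_{S_{k-1}}$, and for edge-disjoint subgraphs $H,K$ one does have the \emph{correct} direction $b(H\cup K)\ge b(H)+b(K)$ (identifying vertices in the quotient only helps this inequality). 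Applying that to the $n-k+1$ blocks and using $b(\Gamma/\Gamma_{S_{k-1}})=b(\Gamma)-b(\Gamma_{S_{k-1}})$ gives $b(\Gamma_{S_k})\le b(\Gamma)-3(n-k)\le 2k-1$, which is the bound you want. Your later attempt to patch the counting via $\sum_i b(\Gamma_{e_i})\le b(\Gamma)+(n-1)$ also has the inequality the wrong way around (in fact $\sum_i b(\Gamma_{e_i})\le b(\Gamma)$ with no correction, by the edge-disjointness observation just mentioned), and in any case that global count, which is fixed under reindexing, cannot detect which stage $k$ of a greedy process is the one that gets stuck.
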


\begin{proof}

The proof is by induction on $k$. As noted above, in view of Lemma~\ref{lem:b} we may assume that $b(\Gamma_{e_1})=1$.

Suppose now that $2\le k\le n$ and that we have already re-indexed the edges of $R_n$ so that $b(\Gamma_{S_i})\le 2i-1$ for $i=1,\dots, k-1$.
In particular,  $b(\Gamma_{S_{k-1}})\le 2k-3$.  For $1\le i<j\le n$ put $S_i^j:=\{1,\ldots ,i\}\cup\{j\}$.

 Choose $j\in \{k,\dots, n\}$ so that $b(\Gamma_{S_{k-1}^j})=\min \{ b(\Gamma_{S_{k-1}^i})  | k\le i\le n\}$.
Then re-index the edges $e_k,\dots, e_n$ so that $j=k$. Thus now $b(\Gamma_{S_k})=b(\Gamma_{S_{k-1}^k})\le b(\Gamma_{S_{k-1}^i})$ for $i=k,\dots, n$.
 
By Lemma~\ref{lem:quotient},  we have $$b(\Gamma_{S_k})-b(\Gamma_{S_{k-1}})=b(\Gamma_{S_k}/\Gamma_{S_{k-1}}). $$ 

If $b(\Gamma_{S_k}/\Gamma_{S_{k-1}})\le 2$ then $b(S_k)\le (2k-3)+2=2k-1$ and $S_k$ has the properties required by the lemma, completing the inductive step.

Thus suppose that $b(\Gamma_{S_k}/\Gamma_{S_{k-1}})\ge 3$. The minimality assumption in the choice of $e_k$ implies that for all $i=k,\dots, n$ we have $b(\Gamma_{S_{k-1}^i}/\Gamma_{S_{k-1}})\ge 3$. Thus for $i=k,\dots, n$ the graph $\Gamma/\Gamma_{S_{k-1}}$ contains a graph of Betti number at least $3$ all of whose edges map to $e_i^{\pm 1}$ in $R_n$. Therefore $$ b(\Gamma_{S_k}/\Gamma_{S_{k-1}})\le b(\Gamma/\Gamma_{S_{k-1}})-3(n-k).$$
Recall that by assumptions on $\Gamma$ in Theorem~\ref{notallpathslift} we have $b(\Gamma)\le 2n-1$. 
Hence 
\begin{gather*}
b(\Gamma_{S_k})= b(\Gamma_{S_k}/\Gamma_{S_{k-1}})+ b(\Gamma_{S_{k-1}})
\le b(\Gamma/\Gamma_{S_{k-1}})-3(n-k)+ b(\Gamma_{S_{k-1}})= \\
=b(\Gamma)-3(n-k)\le (2n-1)-3(n-k)\le 2k-1,
\end{gather*}
as required. This completed the inductive step and the proof of the lemma.
\end{proof}

\begin{conv}
For the remainder of the section we assume that the edges of $R_n$ are indexed so that $b(\Gamma_{e_1})=1$ and that for every $k=1,\dots, n$ we have $1\le b(\Gamma_{S_k})\le 2k-1$.  Denote by $C_{e_1}$ the unique embedded circuit in $\Gamma_{e_1}$ and let $\Gamma_{e_1}^0$ be the component of $\Gamma_{e_1}$ containing $C_{e_1}$.

For $k=1,\dots, n$ let $\Delta_k$ be  the core of the component of $\Gamma_{S_k}$ containing $C_{e_1}$.
Thus $b(\Delta_k)\le b(\Gamma_{S_k})\le 2k-1$.

\end{conv}

\begin{lem}\label{lem:lifting}
For every $k=1,\dots, n$ every reduced path $\gamma$ in $R_k$ lifts to a path in $\Delta_k$.
\end{lem}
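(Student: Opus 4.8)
The plan is to prove the statement for each $k$ directly, with no induction on $k$. The idea is: given a reduced path $\gamma$ in $R_k$, pad it on both sides by a high power of $e_1$, lift the padded path by path-surjectivity of $p$, observe that each padding block forces its lift to wrap once around the circuit $C_{e_1}$, and then use the fact that $C_{e_1}$ lies in the core $\Delta_k$ to conclude that the portion of the lift reading $\gamma$ is trapped inside $\Delta_k$.

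First I would treat the powers of $e_1$ separately, noting that this already covers \emph{every} reduced path in $R_k$ when $k=1$. Since $b(\Gamma_{e_1})=1$, the component of $\Gamma_{e_1}$ containing $C_{e_1}$ has first Betti number $1$, so its core is the embedded circle $C_{e_1}$ itself; lifting the path $e_1^{m}$ with $m:=|E\Gamma|+1$ by path-surjectivity and repeating the pigeonhole argument from the proof of Lemma~\ref{lem:b} produces an embedded circuit inside $\Gamma_{e_1}$, which must be $C_{e_1}$ by uniqueness. Hence $C_{e_1}$, coherently oriented, reads a power of $e_1$, so every power of $e_1$ lifts to a path running around $C_{e_1}$; as $C_{e_1}\subseteq\Delta_k$ (a circle always lies in the core of its component), this handles all powers of $e_1$, and the trivial path lifts to a vertex of $C_{e_1}$.

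For the remaining case, fix a reduced path $\gamma$ in $R_k$ that is not a power of $e_1$ (so in particular $k\ge 2$), with first letter $\ell_1$ and last letter $\ell_2$, and put $N:=|E\Gamma|+1$. I would attach reduced pads $\pi_L$ and $\pi_R$, each containing a block $e_1^{N}$, chosen so that $\widehat\gamma:=\pi_L\,\gamma\,\pi_R$ is still reduced: take $\pi_L:=e_1^{N}$ if $\ell_1\ne e_1^{-1}$ and $\pi_L:=e_1^{N}e_2$ otherwise, and symmetrically $\pi_R:=e_1^{N}$ if $\ell_2\ne e_1^{-1}$ and $\pi_R:=e_2e_1^{N}$ otherwise. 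Since every edge of $\widehat\gamma$ lies in $\{e_1^{\pm1},\dots,e_k^{\pm1}\}$, any lift of $\widehat\gamma$ furnished by path-surjectivity lies in $\Gamma_{S_k}$; write it as $L\cdot M\cdot P$, the concatenation of the lifts of $\pi_L$, $\gamma$, $\pi_R$. The subpath of $L$ lifting the block $e_1^{N}$ is a reduced edge-path of length $N=|E\Gamma|+1$ inside $\Gamma_{e_1}$, hence contains an embedded circuit, which is $C_{e_1}$ because $b(\Gamma_{e_1})=1$; thus $L$ meets $C_{e_1}$, and likewise $P$ meets $C_{e_1}$. As the whole lift is connected, lies in $\Gamma_{S_k}$, and meets $C_{e_1}$, it lies in the component $X_k$ of $\Gamma_{S_k}$ whose core is $\Delta_k$.

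Finally I would invoke the elementary fact that a reduced edge-path in $X_k$ cannot both enter and leave a pendant tree of $\Delta_k$ — such an excursion would be a nontrivial reduced loop in a tree — so the lift decomposes as $\xi_1\,\xi_2\,\xi_3$ with $\xi_1,\xi_3$ supported in pendant trees and $\xi_2\subseteq\Delta_k$. Since $C_{e_1}\subseteq\Delta_k$ already occurs inside $L$ and inside $P$, the initial pendant-tree segment $\xi_1$ must terminate within $L$ and the terminal segment $\xi_3$ must begin within $P$; therefore the middle block $M$ is contained in $\xi_2\subseteq\Delta_k$, so $M$ is the desired lift of $\gamma$ in $\Delta_k$. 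The step I expect to require the most care is exactly this last one: arranging the pads so that one can genuinely certify that the lift of $\gamma$ lies strictly between two visits to $C_{e_1}$, together with nailing down the pendant-tree structure of reduced paths; the remaining ingredients (reducedness of lifts, the pigeonhole circuit argument, and the fact that an embedded circle lies in the core) are all routine.
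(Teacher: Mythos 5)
Your proposal is correct and follows essentially the same route as the paper: pad $\gamma$ on both sides by a long block of $e_1$'s so that path-surjectivity gives a lift whose two padding segments each run over $C_{e_1}\subseteq\Delta_k$, and then argue (via the core/pendant-tree structure of the relevant component of $\Gamma_{S_k}$) that the middle portion lifting $\gamma$ is trapped in $\Delta_k$. The only cosmetic differences are that the paper pads with $e_1^{\pm m}$ (choosing the sign to keep the concatenation reduced) rather than inserting an $e_2$, which also makes the separate treatment of powers of $e_1$ unnecessary, and that the paper leaves the final pendant-tree step implicit whereas you spell it out.
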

\begin{proof}
Let $1\le k\le n$ and let $\gamma$ be any reduced path in $R_k$. Choose $\epsilon,\delta\in \{-1, 1\}$ so that the path $e_1^\epsilon \gamma e_1^\delta$ is reduced.
Put $m=|E\Gamma|+1$.  Thus $\gamma_m=e_1^{m\epsilon} \gamma e_1^{m\delta}$ is a reduced path in $R_k$. Since $p:\Gamma\to R_n$ is path-surjective, the path $\gamma_m$ lifts to some path $\widetilde{\gamma}_m$ in $\Gamma$. The choice of $m$ implies that $\widetilde{\gamma}_m$ is contained in the component of $\Gamma_{S_k}$ containing $C_{e_1}$, that is, the component $Y$ of $\Gamma_{S_k}$ containing $\Delta_k$. Note that $\Delta_k=core(Y)$ and $C_{e_1}\subseteq \Delta_k$.
Each of the two subpaths of $\widetilde{\gamma}_m$ corresponding to the lifts of $e_1^{m\epsilon}$ and  $e_1^{m\delta}$ overlaps $C_{e_1}$ and hence overlaps $core(Y)$. 
Therefore the portion of $\widetilde{\gamma}_m$  corresponding to the lift of $\gamma$ is contained in $core(Y)=\Delta_k$. Thus $\gamma$ lifts to $\Delta_k$, as claimed.
\end{proof}

We now examine the properties of the graph $\Delta_2$.

\begin{prop}\label{prop:k=2}
One of the following holds:
\begin{enumerate}
\item The restriction of $p$ to $\Delta_2$ is a 2-sheeted cover onto $R_2$.
\item $R_2$ lifts to $\Delta_2$. 
\end{enumerate} 
\end{prop}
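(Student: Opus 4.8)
The plan is to analyze the graph $\Delta_2$ using the structural information accumulated so far: $\Delta_2$ is the core of a connected graph, it carries only the two labels $e_1^{\pm 1}, e_2^{\pm 1}$, it contains the embedded circuit $C_{e_1}$, its Betti number satisfies $b(\Delta_2)\le b(\Gamma_{S_2})\le 3$, and by the previous lemma every reduced path in $R_2$ lifts to $\Delta_2$. I would first record that $b(\Gamma_{e_1})=1$ means $\Gamma_{e_1}^0$ is a simplicially subdivided circle $C_{e_1}$ together with (possibly) some trees hanging off it, but after passing to the core of the relevant component these trees are irrelevant. The key point is that $C_{e_1}$ is a circle whose edges all map to $e_1$; since $p$ is a morphism and hence locally injective around the interior of each edge, walking around $C_{e_1}$ reads a reduced cyclic word in $e_1$ alone, so $C_{e_1}$ is a single $e_1$-loop subdivided into, say, $\ell$ edges, and $p$ restricted to $C_{e_1}$ is an $\ell$-fold cover of the $e_1$-petal $S_{e_1}$ of $R_2$ — or rather, after collapsing the subdivision, we may as well think of $C_{e_1}$ as a single loop and $p|_{C_{e_1}}$ a $1$-fold cover. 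Actually the cleaner statement: because $p$ is a morphism, $C_{e_1}$ maps onto the $e_1$-loop of $R_n$ as an $\ell$-fold cyclic cover for some $\ell\ge 1$; and because we are inside a core graph of Betti number $\le 3$ we will be able to bound $\ell$.

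Next I would bring in the $e_2$-edges. The core graph $\Delta_2$ is obtained from $C_{e_1}$ by attaching some $e_2$-labelled arcs (maximal arcs all of whose edges map to $e_2^{\pm 1}$). Since $b(\Delta_2)\le 3$ and $b(C_{e_1})=1$, Lemma~\ref{lem:quotient} applied to $C_{e_1}\subseteq \Delta_2$ gives $b(\Delta_2/C_{e_1})\le 2$, so after collapsing $C_{e_1}$ to a point the $e_2$-edges form a graph of Betti number at most $2$ on a single vertex — i.e. there are at most a handful of maximal $e_2$-arcs, and this bounds how complicated $\Delta_2$ can be. Then I would use the lifting hypothesis: the reduced paths $e_1^m e_2 e_1^{-m}$, $e_2^m e_1 e_2^{-m}$, $e_1 e_2 e_1^{-1} e_2^{-1}$, etc. (for $m=|E\Gamma|+1$) all lift to $\Delta_2$, which forces the existence of $e_2$-labelled edges issuing from vertices on $C_{e_1}$ in both orientations, and by the folded-ness (i.e. $p$ being a morphism, so at each vertex distinct oriented edges carry distinct labels) there can be at most one $e_2$-edge and one $e_2^{-1}$-edge issuing from each vertex. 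A short case analysis on the number of subdivision points of $C_{e_1}$ that are endpoints of $e_2$-arcs, combined with the Betti-number bound, should leave only two possibilities: either every vertex of $\Delta_2$ has exactly the local picture of a $2$-sheeted cover of $R_2$ (two vertices, matched $e_1$- and $e_2$-edges between them), giving conclusion (1); or there is a vertex at which both an $e_1$-loop (or in-and-out $e_1$-edge) and an $e_2$-loop are available, which by a local-to-global argument (lift the standard generators $e_1,e_2$ of $R_2$ starting at that vertex) yields a section $R_2\to\Delta_2$, i.e. conclusion (2).

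The main obstacle I anticipate is the bookkeeping in the case analysis: I must rule out the "intermediate" configurations where $\Delta_2$ is a genuine higher-sheeted (e.g. $3$-sheeted) cover of $R_2$, or a graph that is neither a cover nor admits a section. Here the hypotheses pull in opposite directions and must be balanced carefully: path-surjectivity (every reduced word in $e_1,e_2$ lifts) pushes toward $\Delta_2$ being "rich" — a cover — while $b(\Delta_2)\le 3$ caps the size. The honest work is checking that a connected, folded, core $\{e_1,e_2\}$-graph in which all reduced words lift and $b\le 3$ but which is not a $2$-sheeted cover must have a vertex of the section-admitting type; I expect this to come down to: if $p|_{\Delta_2}$ is not locally bijective at some vertex $v$ (i.e. not a cover), then some label, say $e_1$, has two like-oriented edges at $v$ or is missing at $v$; unlifting this against path-surjectivity forces $\Delta_2$ to contain, off of $C_{e_1}$, an $e_1$-subgraph of Betti number $\ge 2$ (as in the proof of the re-indexing lemma), which together with $b(C_{e_1})=1$ and an $e_2$-loop somewhere would exceed $b(\Delta_2)\le 3$ unless the extra $e_1$- and $e_2$-loops coincide at a common vertex — and a common vertex carrying an $e_1$-loop and an $e_2$-loop is exactly a point from which $R_2$ lifts. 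I would organize this as: (a) reduce to $\Delta_2$ folded and core; (b) show $p|_{\Delta_2}$ either is locally bijective everywhere, handled by connectedness to give (1), or fails local bijectivity at some $v$; (c) in the latter case produce the lift of $R_2$ at or near $v$, giving (2).
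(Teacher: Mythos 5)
Your outline correctly identifies the two forces at play (path-surjectivity makes $\Delta_2$ rich, the Betti-number cap limits it) and correctly sees that the dichotomy should come from a case analysis, but there is a genuine gap at the heart of the execution. You write ``by the folded-ness (i.e.\ $p$ being a morphism, so at each vertex distinct oriented edges carry distinct labels).'' In the paper's conventions a \emph{morphism} is merely a cellular map taking open edges homeomorphically onto open edges; it is emphatically \emph{not} an immersion. No folding hypothesis appears in Theorem~\ref{notallpathslift}, and none can be added without destroying its intended use: when the theorem is invoked later, the graph it is applied to is the \emph{last unfolded stage} of a Stallings folding sequence before a lift of $R_n$ appears, which is precisely not a folded graph. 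Consequently your step~(a), ``reduce to $\Delta_2$ folded and core,'' is not a harmless normalization. Replacing $\Delta_2$ by its fold can create a lift of $R_2$ where none existed, and a $2$-sheeted-cover structure on the fold does not pull back to $\Delta_2$; you do not explain how either conclusion would transfer. Once the ``at most one $e_2$-edge and one $e_2^{-1}$-edge at each vertex'' premise is removed, the short local case analysis you anticipate in steps~(b)--(c) no longer closes. (A smaller slip: you justify ``$C_{e_1}$ is an $\ell$-fold cyclic cover of the $e_1$-petal'' by local injectivity on edge interiors, but that alone does not force the exponents around the circuit to all have the same sign; the correct reason, implicit in Lemma~\ref{lem:b}, is that $C_{e_1}$ is extracted from a lift of $e_1^k$, so its cyclic label is literally $e_1^{\ell}$.)

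The paper's proof sidesteps local bijectivity entirely. It fixes circuits $C_{e_1}\subseteq\Gamma_{e_1}^0$ and $C_{e_2}\subseteq\Gamma_{e_2}^0$ and splits on the global data $b(\Gamma_{e_2})$, $b(\Gamma_{e_2}^0)$, and the intersection pattern of $\Gamma_{e_1}^0$ with the components of $\Gamma_{e_2}$ (Subcases A--D). In each subcase it lifts carefully chosen long reduced words such as $e_1^n e_2^n e_1^n e_2^n$, $e_1 e_2 e_1^{-1} e_2^{-1} e_1 e_2$, and $e_1^{\varepsilon n} e_2 e_1^{-\varepsilon}$; the constraint that these lifts live in components of bounded Betti number then forces either matching loop edges at a common vertex (giving~(2)) or a pair of vertices joined by alternating $e_1$- and $e_2$-edges (giving~(1)), with one subcase shown to be impossible outright. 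That global Betti-and-lifting argument is exactly what replaces the local injectivity you were implicitly relying on. To rescue your route you would need to prove an auxiliary lemma --- either that $\Delta_2$ may be taken folded (which appears false in the intended applications), or that a repeated label at a vertex of $\Delta_2$ already produces a lift of $R_2$ --- and neither is immediate.
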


\begin{proof}

Recall that  $C_{e_1}$ is the unique embedded circuit in $\Gamma_{e_1}$ and that $\Gamma_{e_1}^0$ is the component of $\Gamma_{e_1}$ containing $C_{e_1}$.
Thus $C_{e_1}$ is labelled by a power of $e_1$.

 Note that there must exist a circuit $C_{e_2}$ labeled by a power of $e_2$ that lies in a component $\Gamma_{e_2}^0$ of $\Gamma_{e_2}$ that intersects $\Gamma_{e_1}^0$ in some (not necessarily unique) vertex $v$ as any path of type $e_1^ ne_2^ n$ is assumed to lift to $\Gamma_{S_2}\subset \Gamma$. We distinguish a number of subcases, which cover all possibilities:

\medskip \noindent{\bf Subcase A: } $b(\Gamma_{e_2})=b(\Gamma_{e_2}^0)=2$. In this case we have $\Delta_2\subset\Gamma_{e_1}^ 0\cup \Gamma_{e_2}^0$ and $\Gamma_{e_1}^ 0\cap \Gamma_{e_2}^0=\{v\}$ as $\Delta_2$ is a core graph with $b(\Delta_2)\le 3$. It follows that $R_2$ lifts to $\Delta_2$  as by Lemma~\ref{lem:lifting} the path $e_1,e_2,e_1,e_2$ must have a lift which implies that for $i=1,2$ there must be a loop edge based at $v$ that is mapped to $e_i$.

\medskip \noindent{\bf Subcase B: } $b(\Gamma_{e_2})=2$ and $b(\Gamma_{e_2}^0)=1$. Again we must have $\Gamma_{e_1}^ 0\cap \Gamma_{e_2}^0=\{v\}$ for Betti number reasons. Moreover there exists a component $\Gamma_{e_2}^1$ of $\Gamma_{e_2}$ with $\Gamma_{e_2}^1\neq\Gamma_{e_2}^0$ and  $b(\Gamma_{e_2}^ 1)=1$. 

\smallskip If $\Gamma_{e_1}^0\cap \Gamma_{e_2}^1=\emptyset$ then any path of type $e_1^ n,e_2^n,e_1^ n,e_2^n$ with $n\ge |E\Gamma|$ must lift to $\Gamma_{e_1}^0\cup \Gamma_{e_2}^0$ as lifts are connected and any subpath $e_i^n$ must lift to a component of $\Gamma_{e_i}$ that contains a non-trivial circuit. Thus any path $e_i^n$ must lift to a closed path in $\Gamma_{e_i}^0$ based at $v$ for all $n\ge |E\Gamma|$.  As $b(\Gamma_{e_1}^0)=b(\Gamma_{e_2}^0)=1$ this  implies that $\Gamma_{ e_i}^0$ contains a loop edge based at $v$ that is mapped to $e_i$ for $i=1,2$, thus $R_2$ lifts to $\Delta_2$.

\smallskip If $\Gamma_{e_1}^0\cap \Gamma_{e_2}^1\neq \emptyset$ then  $\Gamma_{e_1}^0\cap \Gamma_{e_2}^1$ must consist of a single vertex $w$ for Betti number reasons. Moreover we have $\Delta_2\subset \Gamma_{e_1}^ 0\cup \Gamma_{e_2}^ 0\cup \Gamma_{e_2}^ 1$. By Lemma~\ref{lem:lifting} the path $e_1,e_2,e_1,e_2,e_1,e_2,e_1$ lifts to $\Delta_2$ and the lifts of the occurrences of $e_2$ must be loop edges based at $v$ or $w$. If the lifts of two consecutive occurrence of $e_2$ are identical then there must also be a loop edge mapped to $e_1$ at the same base vertex and it follows that $R_2$ lifts. Otherwise there must be an edge from $v$ to $w$ and also an edge from $w$ to $v$ that is mapped to $e_1$ in which case the restriction of $p$ to $\Delta_2$ is a $2$-sheeted cover onto $R_2$.

\medskip \noindent{\bf Subcase C: } $b(\Gamma_{e_2})=b(\Gamma_{e_2}^0)=1$ and $\Gamma_{e_1}^0\cap \Gamma_{e_2}^0=\{v\}$. In this case any path of type $e_1^ n,e_2^n,e_1^ n,e_2^n$ with $n\ge |E\Gamma|$ must lift to $\Gamma_{e_1}^0\cup \Gamma_{e_2}^0$ and we argue as in Subcase B that $R_2$ lifts.

\medskip \noindent{\bf Subcase D: } $b(\Gamma_{e_2})=b(\Gamma_{e_2}^0)=1$ and $\Gamma_{e_1}^0\cap \Gamma_{e_2}^0=\{v,w\}$. 

If $\Delta_2$ contains no loop edge at $v$ or $w$ then there must be edges from $v$ to $w$ that map to $e_1$, $e_2$, $e_1^{-1}$ and $e_2^ {-1}$  as, by Lemma~\ref{lem:lifting},   the path $e_1,e_2,e_1^ {-1}, e_2^ {-1},e_1,e_2$ lifts to $\Delta_2$, and the lift must alternate between $v$ and $w$. It follows that the restriction of $p$ to $\Delta_2$ is a 2-sheeted cover of $R_2$.

In the remaining case we assume by symmetry that $\Gamma_{e_1}^0$ has a loop edge based at $v$. We may assume that $\Gamma_{e_2}^ 0$ does not have a loop edge based at $v$, otherwise $R_2$ lifts to $\Delta_2$. Let now $f$ be the unique edge of  $\Gamma_{e_1}^ 0\cap\Delta_2$ with $\alpha(f)=w$. Choose $\varepsilon\in\{-1,1\}$ such that $p(f)=e_1^{\varepsilon}$. It follows that any lift of $e_1^{\varepsilon n}$ for $n\ge |E\Gamma|$  to $\Delta_2$ terminates  at $v$. As $e_1^{\varepsilon n}e_2e_1$ 
lifts to $\Delta_2$ and as there is no loop edge in $\Gamma_{e_2}^0$ based at $v$ it follows that any lift of $e_1^{\varepsilon n}e_2$ to $\Delta_2$ 
must terminate at $w$. It follows that the path $e_1^{\varepsilon n}e_2e_1^{-\varepsilon}$ does not lift to $\Delta_2$, thus this case does not occur.
\end{proof}

\begin{proof}[Proof of Theorem~\ref{notallpathslift}]

We claim that for $2\le k\le n$ one of the following holds:
\begin{enumerate}
\item The restriction of $p$ to $\Delta_k$ is a 2-sheeted cover onto $R_k$.
\item $R_k$ lifts to $\Delta_k$.
\end{enumerate} 

For $k=n$ the above claim implies the statement of Theorem~\ref{notallpathslift} since $\Gamma=core(\Gamma)=\Delta_n$.
Note that the claim above is false for $k=1$.

We establish the claim by induction on $k$. The base case, $k=2$, is exactly the conclusion of Proposition~\ref{prop:k=2}.

Suppose now that $3\le k\le n$ and that the claim has been verified for $k-1$.  Thus  we know that $\Delta_{k-1}$ is either a 2-sheeted cover of $R_{k-1}$ or that $\Delta_{k-1}$ contains a lift of $R_{k-1}$. In the first case we may moreover assume that there is an edge with label $e_1^{-1}$ and an edge with label $e_1$ connecting the two vertices of $\Delta_{k-1}$.

\medskip \noindent{\bf Subcase A: } $\Delta_{k-1}$ contains a lift $\tilde R_{k-1}$ of $R_{k-1}$. If $\Delta_k$ contains a loop edge at the base vertex of $\tilde R_{k-1}$ that is mapped to $e_k$ then  $\Delta_k$ contains a lift of $R_k$ and there is nothing to show. Thus we may assume that such a loop edge does not exist. We will show that this yields a contradiction.

Let $\Gamma_{1k}$ be the core of the component of the subgraph $\Gamma_{\{e_1,e_k\}}$ containing the unique loop edge mapped to $e_1$. Moreover let $R^{1k}$ be the subgraph  of $R_n$ consisting of the edge pairs $e_1^{\pm 1}$ and $e_k^{\pm 1}$.  By an argument similar to the proof of Lemma~\ref{lem:lifting}, any path $\gamma$ in $R^{1k}$  must lift to $\Gamma_{1k}$.  It now follows from the case $k=2$ and the fact that  $R^{1k}$  does not lift to $\Gamma$ that $\Gamma_{1k}$ is either a 2-sheeted cover of $R^{1k}$  or that $b( \Gamma_{1k})\ge 4$. As $\Gamma_{1k}$ does not contain a second loop edge mapped to $e_1$ it cannot be a 2-sheeted cover,  thus $b( \Gamma_{1k})\ge 4$. Let now $\Gamma':=\Gamma_{1k}\cup \tilde R_{k-1} $. It follows that $$b(\Gamma')\ge 4+ (k-2)=k+2.$$

\smallskip\noindent{\bf Claim: } For $2\le i\le k-1$  either $\Delta_k$ contains a circuit that maps to $e_i$ and that is distinct from the loop in $\tilde R_{k-1}$ or there exist two edges in $E\Delta_k-E\Gamma'$ originating at $\Gamma'$ that map to $e_i$ and $e_i^{-1}$, respectively.

\smallskip\noindent{\em Proof of Claim: }Suppose that no such circuit exists for some $i\in\{2,\ldots k-1\}$. We need to show that for $\varepsilon\in\{-1,1\}$ there exists an edge in $E\Delta_k-E\Gamma'$ originating at $\Gamma'$ that maps to $e_i^{\varepsilon}$. Suppose that  $\varepsilon\in\{-1,1\}$ and that no such edge exists. 
It follows that any lift of the path $e_i^{\varepsilon\cdot |E\Gamma|}$ must terminate at the base vertex $v_0$ of $\tilde R_{k-1}$ and any lift of $e_i^{\varepsilon\cdot |E\Gamma|}e_k$ must terminate at a vertex distinct from $v_0$. This implies  that the path $e_i^{\varepsilon\cdot |E\Gamma|}e_ke_i^{\varepsilon}$ does not lift to $\Delta_k$, a contradiction. The claim follows.\hfill$\Box$

\smallskip Now each additional circuit gives a contribution of at least 1 to the Betti number and two edges originating at $\Gamma'$ do the same as we are looking at a core graph, thus the loose ends must close up. It follows that $$b(\Gamma_k)\ge b(\Gamma')+(k-2)=(k+2)+(k-2)=2k$$ which gives the desired contradiction.

\medskip \noindent{\bf Subcase B: } $\Delta_{k-1}$ is a 2-sheeted cover of $R_{k-1}$ and the edges with label $e_1$ span a circuit of length $2$. It follows in particular that $b(\Gamma_{k-1})=2k-3$.

Define $\Gamma_{1k}$ as in Subcase A, again we apply the case $k=2$ to this graph. As $\Gamma$ contains no loop edge with label $e_1$ and as for Betti number reasons $b(\Gamma_{1k})\le 3$ it follows that $\Gamma_{1k}$ is a 2-sheeted cover of $R_{1k}$. It follows that $\Gamma_{1k}$ consists either of a circuit of length two with label $e_1,e_1$ and two loop edges with label $e_k$ or of two circuits of length $2$ with label $e_1,e_1$ and $e_k,e_k$ with common vertices. In both cases it is immediate that $\Delta_{k-1}\cup \Gamma_{1k}$ is a 2-sheeted cover of $R_k$.

\end{proof}

\section{Lifting random paths in $R_n$}\label{liftingrandompaths}

In this section all edges are geometric edges, i.e. edge pairs. Theorem~\ref{notallpathslift} is only used in this section, specifically in the proof of Theorem~\ref{readingranndomwordsingraphs}.

\begin{defn}
Let $\alpha\in [0,1]$. We say that a path $\gamma$ in some graph $\Gamma$ is $\alpha$-injective if $\gamma$ crosses at least $\alpha\cdot|\gamma|$ distinct topological edges. Thus $\gamma$ is $1$-injective if and only if $\gamma$ travels no topological edge twice. In other words, $\gamma$ is $\alpha$-injective if its image is of volume at least $\alpha\cdot |\gamma|$.
\end{defn}

\begin{defn}\label{def:q-injective}
Let $\Gamma$ be a graph and let $\gamma=e_1,\dots,e_k$ be an edge-path in $\Gamma$. We say that the edge $e_i$ is \emph{$\gamma$-injective} if the topological edge of $\Gamma$ corresponding to $e_i$ is traversed by $\gamma$ exactly once. Similarly, a subpath $\gamma'=e_q\dots e_r$  (where $1\le q\le r\le k$) of $\gamma$ is \emph{$\gamma$-injective} if for every $i=q,\dots, r$ the edge $e_i$ is \emph{$\gamma$-injective}.

Similarly, let $\Upsilon$ be a graph which is a topological segment subdivided into finitely many edges, let $\Gamma$ be a graph and let $f:\Upsilon\to \Gamma$ be a graph map. 
We say that an edge $e$ of $\Upsilon$ is \emph{$\Upsilon$-injective} if for the edge-path $\gamma$ determined by $\Upsilon$ the edge $e$ is $\gamma$-injective.  An arc of $\Upsilon$ is \emph{$\Upsilon$-injective} if every edge of this arc is $\Upsilon$-injective. 
\end{defn}

\medskip In the following we denote the set of all reduced paths in
$R_n$ by $\Omega$ and the set of all reduced paths of length $N$ by
$\Omega_N$. We further call a subset $S\subset \Omega$ \emph{generic}
if $\underset{N\to\infty}{\lim}\frac{|S\cap
  \Omega_N|}{|\Omega_N|}=1$. This definition of genericity agrees with
the more detailed notions of genericity defined in
Section~\ref{S:generic} below.

 Throughout this section we assume that $n\ge 2$.

\medskip The purpose of this section is to establish the following theorem:

\begin{thm}\label{readingranndomwordsingraphs} Let $\alpha\in[0,1)$. The set $\Omega$ of all reduced paths in $R_n$ contains a generic subset $S$ such that the following holds:

Let $s\in S$ and $(\Gamma,v_0)$ be a connected core graph with  $b(\Gamma)\le 2n-1$. Suppose that $f:\Gamma\to R_n$ is a morphism such that $\Gamma$ does not contain a finite subgraph $\Gamma'$ such that $p|_{\Gamma'}:\Gamma'\to R_n$ is a covering (of degree $1$ or $2$). Then any lift $\tilde s$ of $s$ is $\alpha$-injective.
\end{thm}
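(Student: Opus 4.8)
The plan is to build $S$ as the complement of the set of "bad" reduced paths and to show this bad set is negligible. A reduced path $s\in\Omega_N$ is \emph{bad} if there exists some $(\Gamma,v_0)$ as in the statement (connected core graph, $b(\Gamma)\le 2n-1$, morphism $f:\Gamma\to R_n$ with no subgraph on which $f$ is a degree $1$ or $2$ cover) and a lift $\tilde s$ of $s$ to $\Gamma$ which fails to be $\alpha$-injective, i.e. $\tilde s$ crosses fewer than $\alpha N$ distinct topological edges. The key finiteness observation is that although there are infinitely many candidate graphs $\Gamma$, a lift $\tilde s$ that is \emph{not} $\alpha$-injective lives inside the subgraph $\Gamma_0\subseteq\Gamma$ swept out by $\tilde s$, which has at most $\alpha N<N$ topological edges; moreover $b(\Gamma_0)\le b(\Gamma)\le 2n-1$. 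So up to isomorphism there are only finitely many "relevant cores": for each integer $N$ the bad paths of length $N$ are exactly those that lift to some connected graph $\Delta$ with at most $\alpha N$ edges, $b(\Delta)\le 2n-1$, equipped with an edge-labelling by $ER_n$ respecting inversion, such that no subgraph of $\Delta$ is a degree $1$ or $2$ cover of $R_n$ — and such a lift uses every edge of (the core of) $\Delta$.

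The main step is then a counting estimate. First I would reduce to the case where $\Delta$ is \emph{rigid} in a suitable sense: since no subgraph of $\Delta$ covers $R_n$ with degree $1$ or $2$, Theorem~\ref{notallpathslift} (applied to the core of $\Delta$, using $b\le 2n-1$) tells us that $f$ restricted to $\mathrm{Core}(\Delta)$ is \emph{not} path-surjective; equivalently, there is a "forbidden turn" or short forbidden reduced word $w$ in $R_n$ of bounded length $L=L(n)$ that does not lift to $\Delta$ starting from any vertex — or, more precisely, from each vertex of $\Delta$ of degree $<2n$ (in the unoriented, folded sense) some short reduced continuation is blocked. The point is that $\Delta$, being a folded graph that is not a cover of $R_n$, has "missing" edge-directions at its vertices, and these impose that the label of $\tilde s$, read along $s$, avoids certain patterns at each vertex visit. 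Since $\tilde s$ must traverse every edge of $\mathrm{Core}(\Delta)$ and $\Delta$ has $\le \alpha N$ edges, the word $s$ of length $N$ is forced to repeatedly re-traverse a bounded graph while obeying these local constraints; a standard transfer-matrix / subadditivity argument shows the number of reduced words of length $N$ arising this way from a \emph{fixed} such $\Delta$ is at most $C\cdot\mu^N$ with $\mu<2n-1$ strictly (the growth rate of reduced words avoiding a fixed nonempty set of forbidden subwords, or of walks in a graph with $\le \alpha N$ edges whose adjacency spectral radius is bounded away from that of $R_n$'s Cayley graph). Summing over the finitely many $\Delta$ with $\le \alpha N$ edges and over $N$, and comparing with $|\Omega_N|\sim c\,(2n-1)^N$, gives
\[
\frac{|\{\text{bad }s\}\cap\Omega_N|}{|\Omega_N|}\ \le\ \frac{\text{poly}(N)\cdot \mu^N}{c\,(2n-1)^N}\ \xrightarrow[N\to\infty]{}\ 0,
\]
so $S:=\Omega\setminus\{\text{bad paths}\}$ is generic, and by construction every lift of every $s\in S$ to any admissible $(\Gamma,v_0)$ is $\alpha$-injective.

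The hard part will be making the "forbidden pattern" argument quantitative and uniform. Specifically: (i) extracting from "no subgraph is a degree-$\le 2$ cover of $R_n$" — via Theorem~\ref{notallpathslift} — a concrete bounded-length obstruction that constrains the \emph{label sequence} of $\tilde s$, not just its combinatorics in $\Delta$, and doing so uniformly over all such $\Delta$ with boundedly many edges; and (ii) converting a non-$\alpha$-injective lift into one that genuinely re-uses edges enough to trigger the constraint — one must be careful that $\tilde s$ might spend a long $\alpha$-injective initial segment and then a short repetitive tail, so the "negligible" conclusion should really be phrased as: for each fixed $\Delta$ and each designated embedded core $\Delta_0\subseteq\Delta$ with $e(\Delta_0)=m$, the set of reduced $N$-words that lift to a walk in $\Delta$ crossing $\le m$ distinct edges total has exponentially small density, and $m\le\alpha N$ by hypothesis; then summing over the finitely many pairs $(\Delta,\Delta_0)$ with $m\le\alpha N$, together with the polynomial-in-$N$ bound on how many such pairs there are, closes the estimate. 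A subadditivity argument over the decomposition of $s$ into subwords of length $\approx L$ handles the uniformity in $N$ cleanly.
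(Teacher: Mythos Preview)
Your approach has a real gap in the counting. You claim that the relevant labelled graphs $\Delta$ with $\le \alpha N$ edges and $b(\Delta)\le 2n-1$ can be enumerated with only polynomial-in-$N$ cost, and that Theorem~\ref{notallpathslift} provides a forbidden reduced word of length bounded by some $L=L(n)$ depending only on $n$. Both claims fail. For the second: take $\Delta$ to be a single circle of length $m$ labelled by a generic cyclically reduced word $c$. This is a core graph with $b(\Delta)=1\le 2n-1$ containing no degree-$1$ or degree-$2$ cover of $R_n$, and the reduced words that lift to it are exactly the subwords of $c^{\pm\infty}$; for generic $c$ every reduced word of length up to roughly $\log_{2n-1} m$ occurs, so the shortest non-lifting word has length growing with $m$, not bounded in terms of $n$. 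Consequently the spectral gap you invoke is not uniform: for $\Delta$ of size $\sim\alpha N$ one only gets $\mu(\Delta)=2n-1-O(1/N)$, and $(\mu/(2n-1))^N$ does not tend to $0$. For the first claim: while the topological type of $\Delta$ is indeed bounded by the Betti-number constraint, the labels on its arcs contribute on the order of $(2n-1)^m$ choices, which for $m\sim\alpha N$ is exponential in $N$, so the union bound over $\Delta$ swamps any per-$\Delta$ savings you might obtain.

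The paper's proof avoids this entirely by never working with the full image of $\tilde s$. Instead it uses two probabilistic lemmas (Lemmas~\ref{nonperiodic} and~\ref{manysubpaths}) to build, inside any non-$\alpha$-injective lift, an increasing chain $\Gamma_0\subset\Gamma_1\subset\cdots\subset\Gamma_{2n}$ of subgraphs whose volumes are bounded by constants $\ell_0,\dots,\ell_{2n}$ \emph{independent of $N$}, with $b(\Gamma_k)\ge k$. The base $\Gamma_0$ is a short arc (length $\le L(\varepsilon,n)$) traversed a definite fraction of the time by $\tilde s$; this uses that, generically, no fixed long subword can cover a positive fraction of $s$. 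The inductive step applies Theorem~\ref{notallpathslift} to the \emph{bounded-volume} graph $\Gamma_{k-1}$: since there are only finitely many isomorphism types of pairs $(\Gamma_{k-1},f|_{\Gamma_{k-1}})$, one gets finitely many forbidden words $\gamma_P$ of bounded length, and Lemma~\ref{manysubpaths} then forces a short excursion of $\tilde s$ out of $\Gamma_{k-1}$ and back, raising the Betti number. Reaching $b(\Gamma_{2n})\ge 2n$ contradicts $b(\Gamma)\le 2n-1$. The essential idea you are missing is this reduction to finitely many bounded-volume obstruction graphs, which is precisely what makes the generic set uniform over all $(\Gamma,v_0)$.
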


Before we give the proof Theorem~\ref{readingranndomwordsingraphs} we establish Proposition~\ref{nonperiodic} and Proposition~\ref{manysubpaths} which follow from elementary probabilistic considerations. 

\begin{prop}\label{nonperiodic} For any $\varepsilon>0$ there exists a constant $L=L(\varepsilon,n)$ and a generic subset $S\subset \Omega$ such that the following holds:

Suppose that $s\in S$ and that $\gamma$ is a path in $R_n$ of length at least $L$ such that $\gamma$ occurs as least twice as a subpath of $s$ such that these two subpaths do not overlap.  Then the total length of any collection of non-overlapping subpaths of $s$ that coincide with $\gamma$ is bounded from above by $\varepsilon|s|$.
\end{prop}

For the remainder of this section, unless specified otherwise,  we fix the following conventions and notations.
\begin{conv} $\, $

\smallskip (1) Denote by $\Omega_N$ the set of of all reduced paths of length $N$, endowed with the uniform distribution. Let $\gamma$ be a fixed path in $R_n$ and put $k:=|\gamma|$. Note that in $R_n$ there are  exactly $2n(2n-1)^{k-1}$ reduced paths of length $k$.

Now consider the random variable $Y=Y_{\gamma, N}:\Omega_N\to\mathbb R$ defined by $$Y_{\gamma, N}(s):=|\gamma|\cdot \max\{l\mid s\hbox{ has }l\hbox{ disjoint  subpaths that coincide with }\gamma\}.$$ Thus $Y$ assigns to any path $s\in \Omega_N$ the total length of a maximal portion of $s$ that is covered by a collection of disjoint copies of $\gamma$. To prove Proposition~\ref{nonperiodic} we will need to estimate $P(Y\ge \epsilon N)$ from above. Our estimate will not depend on $\gamma$ but only on $k=|\gamma|$.

\smallskip (2) Put $p:= \frac{1}{(2n-1)^{k}}$. Note that for $n\ge 2$ and $k\ge 1$ we have $0<2p<1$.
We will also consider a random variable $X=X_k$ that is binomially distributed with parameters $N$ and $2p$.

In addition, let $\Omega_N'=\{0,1\}^N$ with the distribution corresponding to performing $N$ independent tosses of a coin such that for a single toss the probability of the coin landing "heads" is $2p$ and of it landing "tails" is $1-2p$. Thus for a binary string $w\in \Omega_N'$ we have $P(w)=(2p)^t(1-2p)^{N-t}$ where $t$ is the number of $1$'s in the string $w$. Hence for every integer $t\in [0,N]$  $P(X_k=t)$ is equal to the probability of the event that a randomly chosen string $w\in \Omega_N'$ contains exactly $t$  entries $1$.

\smallskip (3) Consider the finite state Markov chain $M$ generating freely reduced
words in $F_n=F(a_1,\dots, a_n)$. Thus the state set of $M$ is
$Q=\{a_1,\dots, a_n\}^{\pm 1}$ with transition probabilities
$P_M(a,b)=\frac{1}{2n-1}$ if $a,b\in Q, b\ne a^{-1}$ and
$P_M(a,b)=0$ if $a,b\in Q$ and $b= a^{-1}$. Note that $M$ is an irreducible finite state Markov chain with the stationary distribution being the uniform distribution on $Q$.  Since $n\ge 2$, whenever $a,b\in Q$ satisfy $P_M(a,b)>0$ then
\[ 
\frac{3}{4}\cdot  \frac{1}{2n} \le P_M(a,b) =\frac{1}{2n-1}\le  \frac{4}{3} \cdot  \frac{1}{2n}.\tag{$\clubsuit$}
\]
\end{conv}

\begin{lem}\label{Claim1}  There exists a constant $C_1=C_1(n,\epsilon)\ge 1$ such that if $k=|\gamma|\ge C_1$ then for every $N\ge 1$ and every integer $t\in [0,N]$ satisfying $t\ge \epsilon N/k$ we have
$(4/3)^t p^t \le (2p)^t (1-2p)^{N}$.
\end{lem}

\begin{proof} Let $n\ge 2$, $N\ge 1$, $k\ge 1$ be arbitrary integers and let $\epsilon>0$. Recall that $p=1/(2n-1)^k$ and that  $0<1-2p<1$.

Let $t\in [0,N]$ be an integer satisfying $t\ge \epsilon N/k$, clearly $t\ge 1$. As $t>0$ the  inequality $(4/3)^t p^t\le (2p)^t(1-2p)^N$ is equivalent to $(4/3)p\le 2p(1-2p)^{N/t}$ which in turn is equivalent to \begin{equation}2/3\le (1-2p)^{N/t}.\tag{$\ast$}\end{equation}

Thus we need to show that ($\ast$) holds provided $k$ is sufficiently large. Since $t\ge \epsilon N/k$, we have $N/t\le k/\epsilon$. As $0<1-2p<1$, it follows that $(1-2p)^{k/\epsilon} \le (1-2p)^{N/t}$.

 For fixed $n$ and $\epsilon$ we have  
 \[
 (1-2p)^{k/\epsilon}=\left(1-\frac{2}{(2n-1)^k}\right)^{k/\epsilon} \xrightarrow{k\to\infty} 1.
 \]
 Hence there exists a constant $C_1=C_1(n,\epsilon)\ge 1$ such that for every $k\ge C_1$ we have $(1-2p)^{k/\epsilon}\ge 2/3$. Then for any $k\ge C_1$
 \[
 (1-2p)^{N/t}\ge (1-2p)^{k/\epsilon} \ge 2/3,
 \]
as required. Thus the lemma is proven.
\end{proof}

\begin{lem}\label{Claim2} Let $C_1=C_1(n,\epsilon)\ge 1$ be the constant provided by Lemma~\ref{Claim1}. Suppose that $k=|\gamma|\ge C_1$.

Then for every integer $t\in [0,N]$ such that $t\ge \epsilon N/k$ we have \[P(Y_{\gamma,N}\ge  kt)\le P(X_k\ge t).\]
\end{lem}

\begin{proof} 

For any integers $1\le i_1<\dots <i_t\le N$ let $V(i_1,\dots,i_t)\subseteq \Omega_N$ be the event that for $s\in \Omega_N$ for each $j=1,\dots t$ the subpaths of $s$ of length $k$ starting 
in positions $i_1,\dots, i_t$ are equal to $\gamma$ and that these subpaths do not overlap in $s$.

We first claim that for every tuple $1\le i_1<\dots <i_t\le N$ we have \[P(V(i_1,\dots,i_t))\le (4/3)^t p^t. \tag{$\heartsuit$}\]

Note that the uniform distribution on $\Omega_N$ is the same as the distribution on $\Omega_N$ obtained by taking the uniform distribution on $Q$, considered as giving the initial letter of a word of length $N$, followed by applying the Markov chain $M$ exactly $N-1$ times.

Choose a specific $t$-tuple $1\le i_1<\dots <i_t\le N$.

If $N-i_t<k-1$, then the terminal segment of $s$ starting with the letter in position $i_t$ has length $<k$ and hence $P(V(i_1,\dots,i_t))=0\le (4/3)^t p^t$. Similarly, if there exists $j$  such that $i_{j}-i_{j-1}\le k-1$  then the subpaths of $s$ of length $k$ starting in positions $i_{j-1}$ and  $i_j$ overlap, and hence $P(V(i_1,\dots,i_t))=0\le (4/3)^t p^t$. Thus we assume that $N-i_t\ge k-1$ and that  $i_{j}-i_{j-1}\ge k$.  For similar reasons, we may assume that whenever $i_{j}-i_{j-1}=k$  then the first letter of $\gamma$ is not the inverse of the last letter of $\gamma$ since otherwise we again have $P(V(i_1,\dots,i_t))=0$.

By symmetry, the probability that the $i_1$-th letter of a random $s\in \Omega_N$ is the same as the first letter of $\gamma$ is $\frac{1}{2n}$. Then, applying the Markov chain $M$ defining above we see that the probability of reading $\gamma$ starting at letter number $i_1$ in $s$ is equal to $\frac{1}{2n(2n-1)^{k-1}}$, so that $P(V(i_1))=\frac{1}{2n(2n-1)^{k-1}}\le \frac{1}{(2n-1)^k}=p$. 

Given that the event $V(i_1)$ occurred, consider the conditional distribution $\nu_{1}$ on $Q$ corresponding to the letter in $s$ in the position $i_1+k$, i.e. the first letter in $s$ immediately after the last letter of the $\gamma$-subword that started in position $i_1$. By $(\clubsuit )$ it follows that for every $a\in Q$ we have $\nu_1(a)\le (4/3) \frac{1}{2n}$.  
For each $a\in Q$ let $H_{a, i_1+k}$ be the event that for an element of $\Omega_N$ the letter in position $i_1+k$ is $a$. Let $P(V(i_2)|H_{a,i_1+k})$ be the conditional probability of $V(i_2)$ given $H_{a,i_1+k}$. Then the conditional probability of $V(i_1,i_2)$ given that $V(i_1)$ occurred can be computed as $$P(V(i_1,i_2)| V(i_1))=\sum_{a\in Q} \nu_1(a) P(V(i_2)|H_{a,i_1+k}).$$
Also, the unconditional probability $P(V(i_2))$ can be computed as  $P(V(i_2))=\sum_{a\in Q} \frac{1}{2n} P(V(i_2)|H_{a,i_1+k})$. The same argument as the argument above for computing $P(V(i_1))$ shows that $P(V(i_2))=\frac{1}{2n(2n-1)^{k-1}}\le \frac{1}{(2n-1)^k}=p$.  Since for every $a\in Q$ we have $\nu_1(a)\le (4/3) \frac{1}{2n}$, it follows that 
\[
P(V(i_1,i_2)| V(i_1))= \sum_{a\in Q} \nu_1(a) P(V(i_2)|H_{a,i_1+k}) \le \] \[\le (4/3) \sum_{a\in Q} \frac{1}{2n} P(V(i_2)|H_{a,i_1+k}) = (4/3)P(V(i_2))\le (4/3)p.
\]

Similarly, $P(V(i_1,i_2,i_3)| V(i_1,i_2))\le (4/3)p$ and, so on, up to \[P(V(i_1,i_2,i_3,\dots, i_t)| V(i_1,i_2,\dots, i_{t-1}))\le (4/3)p.\]

Hence
\begin{gather*}
P(V(i_1,\dots,i_t))=\\
P(V(i_1))P(V(i_1,i_2)| V(i_1))\dots P(V(i_1,i_2,i_3,\dots, i_t)| V(i_1,i_2,\dots, i_{t-1}))\\ \le (4/3)^{t-1}p^t\le (4/3)^t p^t
\end{gather*}
as required. Thus $(\heartsuit)$ is verified.

For $1\le i_1<\dots <i_t\le N$ let $W(i_1,\dots, i_t)\subseteq \Omega_N'$ be the event that for a binary string $w\in \Omega_N'$ the digits in the positions $i_1,\dots, i_t$ are equal to $1$ and for all $j<i_t$, $j\ne i_1,\dots, i_t$ the digit in position $j$ in $w$ is $0$.  Then, by independence, $P(W(i_1,\dots, i_t))=(2p)^t(1-2p)^{i_t-t}$.  Hence, by Lemma~\ref{Claim1} and by $(\heartsuit)$, we have
\[
P(V(i_1,\dots,i_t))  \le (4/3)^tp^t \le  (2p)^t(1-2p)^{N} \le (2p)^t(1-2p)^{i_t-t} =P(W(i_1,\dots, i_t)).
\]

The event  "$Y_{\gamma,N}\ge  kt$" is the (non-disjoint) union of events $V(i_1,\dots,i_t)$ taken over all $t$-tuples $1\le i_1<\dots <i_t\le N$. Also, the event that $w\in \Omega_N'$ has at least $t$ digits $1$ is the \emph{disjoint} union of $W(i_1,\dots, i_t)$, again taken over all $t$-tuples $1\le i_1<\dots <i_t\le N$.
Therefore
\begin{gather*}
P(Y_{\gamma,N}\ge  kt)\le \sum_{1\le i_1<\dots <i_t\le N} P(V(i_1,\dots,i_t))\le \\ \sum_{1\le i_1<\dots <i_t\le N} P(W(i_1,\dots,i_t))=P(X_k\ge t),
\end{gather*}
as required. This completes the proof of Lemma~\ref{Claim2}.
\end{proof}

Having established Lemma~\ref{Claim1} and Lemma~\ref{Claim2}, we can now prove Proposition~\ref{nonperiodic}.

\begin{proof}[Proof of Proposition~\ref{nonperiodic}] Mean and variance for $kX_k$ are given by $$\mu(kX_k)=k\cdot\mu(X_k)=k\cdot N\cdot 2p=k\cdot N\cdot \frac{2}{(2n-1)^{k}}$$ and   $$var(kX_k)=k^2\cdot var(X_k)=k^2\cdot N\cdot 2p\cdot(1-2p)=$$ $$=k^2\cdot N\cdot \frac{2}{(2n-1)^{k}}\cdot (1-\frac{2}{(2n-1)^{k}})<k^2\cdot N\cdot \frac{2}{(2n-1)^{k}}.$$ 

Choose $L>C_1(n,\epsilon)$ such that $\mu(kX_k)\le \frac{\epsilon}{2}\cdot N$ for all $k\ge L$. It now follows from Lemma~\ref{Claim2} and the Chebyshev inequality (as $\epsilon N>\frac{\epsilon}{2} N\ge \mu(kX_k)$) that for any reduced path $\gamma$ with  $|\gamma|=k\ge L$ we have  $$P(Y_{\gamma,N}\ge \epsilon N)\le P(X_k\ge \frac{\epsilon N}{k})=P(kX_k\ge \epsilon N)\le \frac{var(kX_k)}{(\epsilon N-\mu(kX_k))^2}\le $$  $$\le \frac{var(kX_k)}{(\epsilon N-\frac{\epsilon}{2} N)^2}=\frac{4\cdot var(kX_k)}{(\epsilon N)^2}\le \frac{k^2\cdot N\cdot \frac{8}{(2n-1)^{k}}}{\epsilon^2N^2}=\frac{k^2}{\epsilon^2N}\cdot \frac{8}{(2n-1)^{k}}.$$

By Proposition 2.2 of \cite{M} we know that we can assume that no path $\gamma$ of length greater than $C_0\ln(N)$ with $C_0=\frac{11}{\ln(2n-1)}$ occurs twice in a random word of length $N$. Thus we only need to sum up the probabilities for all paths $\gamma$ of length up to $C_0\ln (N)$. Thus, taking into account that for each $k$ there are only $2n(2n-1)^{k-1}$ words to consider, the probability that a path of length at least $L$ and at most $C_0\ln (N)$  covers the portion $\epsilon\cdot N$ of a  path of $\Omega_N$ is bounded from above by $$\sum\limits_{k=L}^{C_0\ln(N)}2n(2n-1)^{k-1}\cdot \frac{k^2}{\epsilon^2N}\cdot \frac{8}{(2n-1)^{k}}=\frac{16n}{(2n-1)\epsilon^2N}\sum\limits_{k=L}^{C_0\ln(N)}k^2\le $$ $$\le \frac{16n}{(2n-1)\epsilon^2N}\cdot (C_0\ln(N))^3.$$

Now this number converges to $0$ as $N$ tends to infinity which proves the proposition.
\end{proof}

\begin{prop}\label{manysubpaths} Let $\alpha\in(0,1]$ and $\gamma$ be a reduced path in $R_n$. There exists a constant $\delta>0$ and a generic subset $S_{\gamma}\subset\Omega$ such that the following hold:

If $s\in S_{\gamma}$ can be written as a reduced product $s=s_0t_1s_1\cdot\ldots\cdot t_qs_q$ such that
\begin{enumerate}
\item $t_i$ does not contain $\gamma$ as a subpath for $1\le i\le q$ and 
\item $\sum\limits_{i=1}^q|t_i|\ge \alpha\cdot|s|$.
\end{enumerate}
Then $q\ge \delta\cdot |s|$
\end{prop}

\begin{proof}

The idea of the proof is simple: As an average subpath of a generic path that doesn't contain $\gamma$ as a subpath must be short, i.e. bounded in terms of $n$ and $\gamma$, and as the $t_i$ cover a definite portion of $s$ it follows that $q$ must be large is $|s|$ is large.

\smallskip We sketch a proof but omit some of the details for brevity. Suppose that $\gamma$ is a reduced path of length $k$ in $R_n$. Let $p=\frac{1}{2n(2n-1)^{k-1}}$. For any $j\in \mathbb N_{\ge 1}$ let  $f(j)$ be the probability that in a random semi-infinite reduced path in $R_n$, a subpath between two consecutive non-overlapping occurrences of $\gamma$ is of length~$j$.

 This defines a probability distribution $f$ on $\mathbb N_{\ge 1}$. This distribution is essentially geometric as the probability $f(l+k-1)$ is approximately $(1-p)^l$. Note that this calculation ignores the fact that the events that distinct subpaths are of type $\gamma$ are (slightly) correlated.  It follows in particular that the probability distribution $f$ has finite mean, i.e that the series $\sum\limits_{j=1}^\infty j\cdot f(j)$ converges. Put $\mu:=\sum\limits_{j=1}^\infty j\cdot f(j)$ and choose $j_0\in\mathbb N$ with $j_0>100k$ such that $$\sum\limits_{j=j_0}^\infty j\cdot f(j)\le \frac{\alpha}{10}\mu.$$

\smallskip Let now $s\in \Omega$. We consider the collection of subpaths that make up the complement of the union of all subpath of $s$ that are of type $\gamma$; we call these subpaths \emph{$\gamma$-complementary}. Then there exists a generic subset $\Omega'\subseteq \Omega$ such that for any $s\in\Omega'$ the collection of $\gamma$-complementary subsets is non-empty. For each $s\in\Omega'$ we obtain a probability distribution $f_s$ on $\mathbb N_{\ge 1}$ where $f_s(j)$ is  the number of $\gamma$-complementary subpaths of length $j$ in $s$ divided by the total number of $\gamma$-complementary subpaths $n_s$, i.e. $f_s$ is the relative frequency of $\gamma$-complementary subpaths of length $j$ in $s$. Then we have $$n_s\cdot \sum\limits_{j=1}^\infty j\cdot f_s(j)\le |s|$$ as the sum on the left is just the number of edges of $s$ that lie in the  $\gamma$-complementary subpaths of $s$.

\noindent{\bf Claim:} There exists some generic subset $S\subseteq \Omega'\subseteq \Omega$ such that for every $s\in S$ we have $$n_s\cdot \sum\limits_{j=j_0}^\infty j\cdot f_s(j)\le n_s\cdot \frac{\alpha}{5}\cdot \sum\limits_{j=1}^\infty j\cdot f_s(j).$$

To verify the claim note first that it can be shown using the law of large numbers, that there exists $\beta\in [0,1)$ such that for any $\epsilon>0$ there exists a generic set $S_\epsilon$ such that the following hold:

\begin{enumerate}
\item For any $s\in S_\epsilon$ $$(\beta-\epsilon) |s|\le n_s\cdot \sum\limits_{j=1}^\infty j\cdot f_s(j)\le (\beta+\epsilon) |s|.$$ 
\item For any $j\in\{1,\ldots j_0-1\}$ and $s\in S_\epsilon$ we have $$(\beta-\varepsilon)\cdot \frac{j\cdot f(j)}{\mu}\cdot |s|\le n_s\cdot j\cdot f_s(j)\le (\beta+\varepsilon)\cdot \frac{j\cdot f(j)}{\mu}\cdot |s|.$$ 
\end{enumerate}

 Indeed, the first assertion states for a generic  path  $s$ the $\gamma$-complementary paths of $s$ cover roughly a fixed portion $\beta$ of $s$, i.e. their total length is roughly $\beta|s|$. The second assertion states that out of this $\gamma$-complementary portion of total length $\beta|s|$,  the portion that consists of $\gamma$-complementary path of length $j$ is roughly $\frac{j\cdot f(j)}{\mu}$, i.e. that total length of the $\gamma$-complementary paths of length $j$ is roughly $\frac{j\cdot f(j)}{\mu}\cdot |s|$. This conclusion follows from the definition of $f$ and $\mu$.

\smallskip 
 Using (2) we get that for any $s\in S_\epsilon$ we have $$|s|\cdot \sum\limits_{j=1}^{j_0-1}(\beta-\varepsilon)\cdot \frac{j\cdot f(j)}{\mu}\le \sum\limits_{j=1}^{j_0-1}n_s\cdot j\cdot f_s(j)\le |s|\cdot \sum\limits_{j=1}^{j_0-1}(\beta+\varepsilon)\cdot \frac{j\cdot f(j)}{\mu}.$$

It follows that $$n_s\cdot \sum\limits_{j=j_0}^\infty j\cdot f_s(j)=n_s\cdot \sum\limits_{j=1}^\infty j\cdot f_s(j)-n_s\cdot \sum\limits_{j=1}^{j_0-1}  j\cdot f_s(j)\le $$ $$\le |s|\cdot (\beta+\epsilon)- |s|\cdot \frac{\beta-\epsilon}{\mu} \cdot \sum\limits_{j=1}^{j_0-1} j\cdot f(j)\le |s|\left(\beta+\epsilon-\frac{\beta-\epsilon}{\mu}(1-\frac{\alpha}{10})\cdot\mu\right)\le$$  $$\le |s|\cdot\left(\frac{\alpha\beta}{10}+ \epsilon\left(2-\frac{\alpha}{10}\right)\right).$$  For $\epsilon$ sufficiently small we further have $$|s|\cdot\left(\frac{\alpha\beta}{10}+\epsilon\left(2-\frac{\alpha}{10}\right)\right)\le \frac{\alpha}{5}(\beta-\epsilon)\cdot |s|\le \frac{\alpha}{5}\cdot n_s\cdot   \sum\limits_{j=1}^ \infty  j\cdot f_s(j)$$ which proves the claim.

Let now $s\in S$ and suppose that $s=s_0t_1s_1\cdot\ldots\cdot t_qs_q$ is as in the formulation of the lemma. Note that for $1\le i\le q$ either $|t_i|\le 2k-2$ or $t_i$ contains a subpath $t_i'$ of length at least $|t_i|-2k+2$ such that $t_i'$ is contained in a $\gamma$-complementary subpath of $s$. It follows from the claim that the total length of all $\gamma$-complementary subpaths of $s$ of length greater than $j_0$ is bounded from above by $$n_s\cdot \sum\limits_{j=j_0}^\infty j\cdot f_s(j)\le n_s\cdot \frac{\alpha}{5}\cdot \sum\limits_{j=1}^\infty j\cdot f_s(j)\le \frac{\alpha}{5}\cdot |s|.$$ Thus the sum of the lengths of those $t_i$ that contain a subpath of a $\gamma$-complementary subpath of length at least $j_0$ is bounded by $$\frac{j_0+2k}{j_0}\cdot \frac{\alpha}{5}\cdot |s|\le \frac{102k}{100k}\cdot \frac{\alpha}{5}\cdot |s|\le \frac{\alpha}{4}\cdot |s|.$$

It follows the sum of the length of those $t_i$ that are of length at most $j_0+2k$ must be at least $\frac{3\alpha}{4}\cdot |s|$ thus there must be at least $$\frac{1}{j_0+2k}\cdot \frac{3\alpha}{4}\cdot |s|=\frac{3\alpha}{4j_0+8k}\cdot |s|$$ such edges. Thus the conclusion of the lemma holds for $\delta:=\frac{3\alpha}{4j_0+8k}$.\end{proof}

We can now give the proof of the main result of this section.

\begin{proof}[Proof of Theorem~\ref{readingranndomwordsingraphs}] Put $\beta:=1-\alpha$. Recall that in Subsection~\ref{subsect:paths} we introduced the notion of arcs, maximal arcs and semi-maximal arcs in a finite graph. Note that it suffices to consider core pairs $(\Gamma,v_0)$ with $b(\Gamma)=2n-1$ as we can otherwise embed $(\Gamma,v_0)$ into a core pair $(\Gamma',v_0)$ with $b(\Gamma')=2n-1$ that still satisfies the hypothesis of Theorem~\ref{readingranndomwordsingraphs} and the conclusion for $(\Gamma',v_0)$ then implies the conclusion for $(\Gamma,v_0)$. 

Note that if $\Delta$ is a finite connected core graph with first Betti  number $m\ge 2$, then $\Delta$ has no degree-1 vertices and $\Delta$ is the union of at most $3m-3$ maximal arcs (which are obtained but cutting $\Delta$ open at all vertices of degree $\ge 3$). Also, if $(\Delta,v_0)$ is a finite connected core pair  with first Betti number $m\ge 2$, then $\Delta$ is the union of at most $3m-1$  maximal arcs.

Now let $(\Gamma,v_0)$ be as in Theorem~\ref{readingranndomwordsingraphs} with $b(\Gamma)=2n-1\ge 3$. As $(\Gamma,v_0)$ is a finite connected core graph the above remark implies that $\Gamma$ decomposes as the union of  at most $3(2n-1)-1=6n-4$ distinct maximal arcs. 
If $\gamma$ is a non-degenerate reduced edge-path in $\Gamma$ then after possibly subdividing the maximal arcs of $\Gamma$ containing endpoints of $\gamma$ along those endpoints,  we obtain a decomposition of $\Gamma$ as the union of distinct pairwise non-overlapping arcs $\Lambda_1,\dots \Lambda_t$ such that $t\le 6n-2<6n$ and such that $\gamma$ 
admits a unique decomposition as a concatenation of simple (and hence reduced) edge-paths
\[
\gamma=\gamma_0 \gamma_1\dots \gamma_r
\]
such that  for each $i=0,\dots, r$ there exists $y(i)\le t$ such that the image of  $\gamma_i$ is equal to the arc $\Lambda_{y(i)}$. 
Note that since the arcs  $\Lambda_1,\dots \Lambda_t$ are non-overlapping, for every $i,j\in \{0,\dots, r\}$ either $\gamma_i=\gamma_j^{\pm 1}$ or the paths $\gamma_i$ and $\gamma_i$ have no to topological edges in common.
We call the above factorization of $\gamma$ the \emph{canonical arc decomposition} of $\gamma$. We also call the arcs $\Lambda_1,\dots \Lambda_t$ the \emph{arc components of $\Gamma$ adapted to $\gamma$}.

{\bf Claim A.} We will show that  there exist constants $\ell_0,\ell_1,\ldots ,\ell_{2n}$ and generic sets $$S_{2n} \subseteq  S_{2n-1}\dots \subseteq S_1\subseteq S_0\subseteq \Omega$$   that only depend on $n$ and $\alpha$ such that the following hold:

\smallskip If $k\in\{0,1,\ldots ,2n\}$, $s\in S_k$, $(\Gamma,v_0)$ and  $f:\Gamma\to R_n$ as in the statement of the theorem with $b(\Gamma)=2n-1$ then for any non-$\alpha$-injective lift $\tilde s$ of $s$ there exist connected subgraphs $\Gamma_0\subset\Gamma_1\subset \ldots \subset \Gamma_{k}$ of $\Gamma$ with the following properties:
\begin{enumerate}
\item The volume of $\Gamma_i$ is bounded by above by $\ell_i$ for $0\le i\le k$.
\item  $b(\Gamma_i)\ge i$ for $0\le i\le k$.
\item If $\tilde s=e_1\dots e_N$ then $\#\{i | e_i\in E\Gamma_0\}\ge \frac{\beta\cdot N}{6n}=\frac{\beta\cdot |s|}{6n}$. 
\end{enumerate}

 \medskip Note that for $k=2n$ this claim implies the theorem. Indeed it implies that the generic set $S:=S_{2n}$ has the property that for any $s\in S$ any lift $\tilde s$ is $\alpha$-injective as a non-$\alpha$-injective lift would imply the existence of a subgraphs of $\Gamma$ of Betti number $2n$ which is impossible.
 
 \medskip The proof of Claim~A is by induction on $k$.

\medskip {\bf The case $k=0$.} Let $\varepsilon=\frac{\beta}{6n}$ and let $S$ and $L=L(\varepsilon,n)=L(\frac{\beta}{6n},n)$ be as in the conclusion of Proposition~\ref{nonperiodic}. We will show that the claim holds for $\ell_0:=L$ and $S_0:=S$. Let $s\in S_0$, $f:\Gamma\to R_n$ as in the statement of Theorem~\ref{readingranndomwordsingraphs}, and let $\tilde s$  a non-$\alpha$-injective lift of $s$. As $\tilde s$ is not $\alpha$-injective, it traverses fewer than $\alpha\cdot |s|$ distinct topological edges. This implies that there is a collection of subpaths of $\tilde s$ of total length at least $\beta\cdot |s|$ such that the image of these subpaths consists of all the topological edges visited at least twice by $\tilde s$.

\smallskip  Since $s$ is reduced and $\tilde s$ is a lift of $s$, the path $\tilde s$ is reduced as well. 
Let $\Lambda_1,\dots, \Lambda_t$, where $t\le 6n$, the the arc components of $\Gamma$ adapted to $\tilde s$ and consider the canonical arc decomposition of $\tilde s$.
Then the union of topological edges visited at least twice by $\tilde s$ is the union of some of the arcs $\Lambda_i$. Since $t\le 6n$, there exists $i_0\le t$ such that  the subpaths of $\tilde s$ that map to $\Lambda_{i_0}$ are of total length at least $\frac{\beta\cdot |s|}{6n}$ and that the number of such subpaths is at least $2$.  It follows from the conclusion of Proposition~\ref{nonperiodic} that $\Lambda_{i_0}$  has length at most $L$. Thus Claim~A follows by choosing  $\Gamma_0=\Lambda_{i_0}$ and putting $\ell_0=L$. Note that this choice of $\ell_0$ is independent of the map $f:\Gamma\to R_n$ and $s$.

\medskip {\bf The case $k\ge 1$.} By induction we already have the constants $\ell_0,\ldots ,\ell_{k-1}$ and a generic set $S_{k-1}$ such that for any $s\in S_{k-1}$ and any non-$\alpha$-injective lift $\tilde s$ of $s$ there exist subgraphs $\Gamma_0\subset\Gamma_1\subset\ldots\subset \Gamma_{k-1}$ with the properties specified in Claim~A. Note that since the volume of $\Gamma_{k-1}$ is $\le \ell_{k-1}$,  there are only finitely many possibilities for $\Gamma_{k-1}$ and for the map $f|_{\Gamma_{k-1}}:\Gamma_{k-1}\to R_n$.

\smallskip

{\bf Claim B.}  We will now show that for any pair $P:=(\Theta,p_{\Theta})$ consisting of a graph $\Theta$ and a morphism $p_{\Theta}:\Theta\to R_n$  
there exists a constant $\ell_{P}$ and a generic set $S_{P}\subset S_{k-1}$ such that the following hold: 

\smallskip Suppose that $f:\Gamma\to R_n$ is as above, $s\in S_{P}$, $\tilde s$ is a non-$\alpha$-injective lift. Let $\Gamma_{k-1}$ be the subgraph of $\Gamma$ whose existence is guaranteed by the induction hypothesis. If  $P=(\Gamma_{k-1},f|_{\Gamma_{k-1}})$ then there exists  a subgraph $\Gamma_k$ of $\Gamma$ containing $\Gamma_{k-1}$ such that $b(\Gamma_k)>b(\Gamma_{k-1})$ and that the volume of $\Gamma_k$ is bounded by $\ell_{P}$. 

\smallskip As there are only finitely many possibilities for $P$ and as the intersection of finitely many generic sets is generic, the conclusion of the inductive step for Claim~A follows from Claim B by taking $\ell_k$ to be the maximum of all occurring $\ell_{P}$ and taking $S_k$ to be the intersection of all $S_{P}$. 

\smallskip To establish Claim B we need to show the existence of $S_{P}$ and $\ell_{P}$ for a fixed pair $P$. Thus assume that $f:\Gamma\to R_n$ is as above, $s\in S_{k-1}$, $\tilde s$ is a non-$\alpha$-injective lift and  $\Gamma_{k-1}$ is as in the conclusion of the claim in the case $k-1$ for $s$ such that $P=(\Gamma_{k-1},f|_{\Gamma_{k-1}})$.  By Theorem~\ref{notallpathslift} there is a path $\gamma_P$ in $R_n$ that does not lift to $\Gamma_{k-1}$. Now write $\tilde s$ as a product $$s_0t_1s_1\cdot\ldots\cdot t_qs_q$$ where the $t_i$ are the path travelled in $\Gamma_{k-1}$ and where for $0<i< q$ $s_i$ is a nontrivial (reduced) edge-path which does not pass through any edges of $\Gamma_{k-1}$.  Then the paths $t_i$ do not contain a subpath that maps to $\gamma_P$  as we assume that $\gamma_P$ does not lift to $\Gamma_{k-1}$. The choice of $\Gamma_0$ (see condition (3) above) implies  that $$\sum\limits_{i=1}^q|t_i|\ge \frac{\beta}{6n}\cdot |s|.$$ 

Proposition~\ref{manysubpaths} implies that if $s$ lies in the generic set $S_{\gamma_P}$ then $q\ge \delta\cdot |s|$ where $\delta$ only depends on $\gamma_P$, $n$ and $\alpha$.
By removing a finite subset from $S_{\gamma_P}$ (which does not affect genericity of $S_{\gamma_P}$) we may assume that for every $s\in S_{\gamma_P}$ we have $\delta |s|-1\ge 1$. 

Then it follows that for the above decomposition of $\tilde s$ there exists $i\in \{1,\dots, q-1\}$ such that $|s_i|\le |s|/(\delta |s| -1)$. Indeed, if for all $i=1,\dots, q-1$ $|s_i|>|s|/(\delta |s| -1)>0$ then 
\[
|s|\ge \sum_{i=1}^{q-1} |s_i| > (q-1) |s|/(\delta |s| -1)\ge (\delta |s| -1)|s|/(\delta |s| -1)=|s|
\]
which is a contradiction.

Thus we can find $i_0\in \{1,\dots, q-1\}$ such that $|s_{i_0}|\le |s|/(\delta |s| -1)$.  Since $\lim_{N\to\infty} N/(\delta N -1)=1/\delta>0$, by removing a finite set from $S_{\gamma_P}$ we may assume that for every $s\in S_{\gamma_P}$ we have $|s|/(\delta |s| -1)\le 2/\delta$.  Thus $|s_{i_0}|\le |s|/(\delta |s| -1)\le 2/\delta$.

 By construction, the initial and terminal vertices of $s_{i_0}$ are in $\Gamma_{k-1}=\Theta$, but $s_{i_0}$ does not traverse any edges of $\Gamma_{k-1}$.  Therefore  for $\Gamma_k:=\Gamma_{k-1}\cup s_{i_0}$ we have $b(\Gamma_k)\ge b(\Gamma_{k-1})+1\ge (k-1)+1=k$. Thus the claim follows by taking $\ell_{P}:=\ell_{k-1}+\frac{2}{\delta}$ and $S_P=S_{k-1}\cap S_{\gamma_P}$. Thus Claim~B is verified, which completes the proof of Theorem~\ref{readingranndomwordsingraphs}.
\end{proof}

\section{Words representing $b_i$.}\label{S:generic}

Recall that we consider groups given by presentations of type 
\begin{equation*}
G=\langle a_1,\ldots ,a_n,b_1,\ldots ,b_n|a_i=u_i(\underline{b}),b_i=v_i(\underline{a})\rangle\tag{*}\end{equation*} where $n\ge 2$ and the $u_i$ and $v_i$ are freely reduced words of length $N$ for some (large) $N$.
If we now take the word/relator $a_i^{-1}u_i(\underline{b})$ and replace all occurrences of $b_i^{\pm 1}$ by $v_i(\underline{a})^{\pm 1}$ we obtain a relator in the $a_i$. We denote by $U_i$ the word obtained from this word by free and cyclic cancellation. A simple application of Tietze transformations shows that \begin{equation*}G= \langle a_1,\ldots ,a_n\mid U_1,\ldots ,U_n\rangle.\tag{**}\end{equation*}

We need an appropriate notion of genericity when working with groups given by presentation (*).  For $A=\{a_1,\dots, a_n\}$, with $n\ge 2$, we think of $F(A)$ as the set of all freely reduced words over $A^{\pm 1}$ and we use the same convention for $F(B)$ where $B=\{b_1,\dots, b_n\}$.  Thus for $N\ge 1$ there are exactly $2n(2n-1)^{N-1}$ elements of length $N$ in $F(A)$ (and same for  $F(B)$).  For $N\ge 1$ we denote by $T(A,N)$  the set of all $n$-tuples $(v_1,\dots, v_n)\in F(A)$ such that $|v_1|=\dots =|v_n|=N$. Similarly, $T(B,N)$ denotes the set of all $n$-tuples $(u_1,\dots, u_n)\in F(B)$ such that $|u_1|=\dots =|u_n|=N$. Thus for every $N\ge 1$ we have 
\[
\# T(A,N)=\#T(B,N)=\left(2n(2n-1)^{N-1}\right)^n=(2n)^n(2n-1)^{n(N-1)}.
\]

We denote by $T(A,B,N)$ the set of all $2n$-tuples $(v_1,\dots, v_n, u_1,\dots, u_n)$ such that $(v_1,\dots, v_n)\in T(A,N)$ and $(u_1,\dots, u_n)\in T(B,N)$. 
We also set $T(A)=\cup_{N=1}^\infty T(A,N)$, $T(B)=\cup_{N=1}^\infty T(B,N)$ and $T(A,B)=\cup_{N=1}^\infty T(A,B,N)$.  

\begin{defn}[Generic sets]

A subset $S\subseteq F(A)$ is \emph{generic} in $F(A)$ if
\[
\lim_{N\to\infty} \frac{\#\{v\in S : |v|=N \}  }{\#\{v\in F(A) : |v|=N \}  }=\lim_{N\to\infty} \frac{\#\{v\in S : |v|=N \}  }{2n(2n-1)^{N-1} }=1
\]
A property of elements of $F(A)$ is said to \emph{hold generically} if the set of all $v\in F(A)$ satisfying this property is a generic subset of $F(A)$.

A subset $Y\subseteq T(A)$ is called \emph{generic} in $T(A)$ if
\[
\lim_{N\to\infty} \frac{\#(Y\cap T(A,N)) }{\# T(A,N)}=\lim_{N\to\infty} \frac{\#(Y\cap T(A,N))     }{(2n)^n(2n-1)^{n(N-1)} }=1
\]

The notions of genericity for subsets of $F(B)$ and of $T(B)$ are defined similarly.

A subset $\mathcal T\subseteq T(A,B)$ is called \emph{generic} if
\[
\lim_{N\to \infty} \frac{\#(\mathcal T \cap T(A,B,N))}{\# T(A,B,N)}=\lim_{N\to\infty} \frac{\#(\mathcal T \cap T(A,B,N))}{(2n)^{2n}(2n-1)^{2n(N-1)}} = 1.
\]
\end{defn}

\begin{defn}[Genericity for presentations]$ $

We say that some property $\mathcal P$ for groups given by presentation (*) \emph{holds generically} if the set 
$\mathcal T_\mathcal P$ of all $(v_1,\dots, v_n,u_1,\dots, u_n)\in T(A,B)$, such that the group $G=\langle a_1,\ldots ,a_n,b_1,\ldots ,b_n|a_i=u_i,b_i=v_i, \text{ for } i=1,\dots, n\rangle$ satisfies $\mathcal P$, is a generic subset of $T(A,B)$. 
\end{defn}

The following fact is a straightforward consequence of known results
about genericity.  Lemma~\ref{wordsoccuratmostonce} below is
essentially a reformulation of the fact that a generic presentation
with a fixed number of generators and a fixed number of defining
relations satisfies arbitrarily strong small $C'(\lambda)$ cancellation condition, see~\cite{AO,A,KS,M}.

\begin{lem}\label{wordsoccuratmostonce} 
Let $0<\alpha<1$ be arbitrary. Then there is a generic subset
$\mathcal T\subseteq T(A,B)$ such that for every $\tau=(v_1,\dots, v_n, u_1,\dots, u_n)\in \mathcal T$
  with $|v_1|=\dots =|v_n|=|u_1|=\dots =|v_n|=N$ the following hold:

For every freely reduced $z\in F(A)$ with $|z|\ge \alpha N$
  there exists at most one $i\in \{1,\dots, n\}$ and at most one $\epsilon\in \{1,-1\}$ such that $z$ is a
  subword of $v_i^{\epsilon}$. Moreover, in this case there  is at most one occurrence of $z$ inside
  $v_i^{\epsilon}$. Also, the same property holds for every  freely reduced $z'\in
  F(B)$ with $|z'|\ge \alpha N$ with respect to the words $u_1,\dots,
  u_n$.

\end{lem}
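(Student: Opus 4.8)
The plan is to prove the $F(A)$-half of the statement and to deduce the full claim by symmetry: the two assertions concern the two independent coordinates of the uniform measure on $T(A,B,N)=T(A,N)\times T(B,N)$, so once the set of ``good'' $A$-tuples and the set of ``good'' $B$-tuples are each shown to be generic, their intersection $\mathcal T\subseteq T(A,B)$ is automatically generic. Fix $\alpha\in(0,1)$ and, for each $N$, put $m:=\lceil\alpha N\rceil\le N$. I would call $(v_1,\dots,v_n)\in T(A,N)$ \emph{bad} if some freely reduced $z\in F(A)$ with $|z|\ge\alpha N$ occurs in two distinct places among $v_1^{\pm1},\dots,v_n^{\pm1}$ --- either as a subword of two different words $v_i^{\epsilon}$ and $v_j^{\epsilon'}$ with $(i,\epsilon)\ne(j,\epsilon')$, or as a subword of one $v_i^{\epsilon}$ with two distinct occurrences; this is exactly the negation of the three conjuncts in the lemma. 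Since $|z|\ge m$, in either case the length-$m$ prefix of $z$ still has two distinct occurrences, so it suffices to bound the proportion of tuples for which \emph{some} reduced word of length exactly $m$ has two distinct occurrences among the $v_i^{\pm1}$. I would record such a repetition by a \emph{collision datum}: indices $i,j\in\{1,\dots,n\}$, starting positions $p,q\in\{1,\dots,N-m+1\}$ and a sign $\eta\in\{1,-1\}$ for which the length-$m$ subword of $v_i$ at position $p$ equals the $\eta$-th power of the length-$m$ subword of $v_j$ at position $q$, with $(i,p)\ne(j,q)$; there are at most $2n^2N^2$ such data, which is polynomial in $N$.

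The next step is to estimate, for a fixed collision datum, the probability that a uniformly random $(v_1,\dots,v_n)\in T(A,N)$ realizes it, using the finite-state Markov chain $M$ from the proof of Lemma~\ref{nonperiodic} that generates a uniformly random freely reduced word one letter at a time (each new letter uniform among the $2n-1$ admissible choices, so any prescribed letter has conditional probability at most $\tfrac{1}{2n-1}$ given the past). This yields the basic bound: for any fixed reduced word $w$ of length $m$, the probability that a random word of length $N$ carries $w$ at a fixed admissible position is at most $(2n-1)^{-m}$. When $i\ne j$ the words $v_i,v_j$ are independent, so conditioning on $v_j$ fixes the right-hand window and the datum has probability at most $(2n-1)^{-m}$. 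When $i=j$ the two windows lie inside one random word; revealing its letters in order and conditioning on everything strictly before the later of the two occurrences, all $m$ letters of that occurrence are forced to prescribed values if the windows are disjoint, and at least $\lfloor m/2\rfloor$ letters of the affected range are forced if they overlap (an overlap imposes a periodicity constraint when $\eta=1$ and a reflection constraint when $\eta=-1$; a reflection with an integer fixed point makes the probability $0$ outright). Hence a fixed collision datum has probability at most $(2n-1)^{-\lfloor m/2\rfloor}$ in all cases.

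A union bound then shows that the proportion of bad tuples in $T(A,N)$ is at most $2n^2N^2\,(2n-1)^{-\lfloor m/2\rfloor}\le 2n^2N^2\,(2n-1)^{-\alpha N/4}$ for $N$ large, which tends to $0$ as $N\to\infty$; thus the good $A$-tuples form a generic subset of $T(A)$, symmetrically the good $B$-tuples form a generic subset of $T(B)$, and, as explained at the outset, the set $\mathcal T\subseteq T(A,B)$ of tuples good in both coordinates is generic in $T(A,B)$, which is the assertion of the lemma. The step I expect to be the only real nuisance is the overlapping case $i=j$: one has to check that the periodicity (resp.\ reflection) equations leave a linear-in-$m$ set of letters determined by strictly earlier letters along the Markov chain, so that conditioning really contributes an exponentially small factor --- but this is routine and the exact exponent is irrelevant. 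As an alternative to this hands-on computation, the lemma also follows directly from the standard genericity statements for small-cancellation presentations recorded in \cite{M,AO,A,KS}.
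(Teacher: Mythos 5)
Your proof is correct. The reduction to a fixed window length $m=\lceil\alpha N\rceil$, the enumeration of the polynomially many collision data, the conditioning via the Markov chain to get $(2n-1)^{-1}$ per forced letter, and the final union bound and product-of-generic-sets observation all hold up; in the one mildly delicate case ($i=j$, overlapping windows, $\eta=-1$), a look at the reflection's (half-)integer fixed point shows the event actually has probability $0$ outright, so your weaker $(2n-1)^{-\lfloor m/2\rfloor}$ bound is more than safe. This is, however, a genuinely different route from the paper's: the paper gives no proof of this lemma at all, stating instead that it is a ``straightforward consequence of known results about genericity'' and pointing to Proposition~2.2 of \cite{M} for the no-repeated-long-subword property and to \cite{AO,A,KS} for the $C'(\lambda)$ small cancellation phenomenon for generic presentations with a fixed number of generators and relators. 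What your argument buys is a short, self-contained first-moment computation that does not require the reader to unpack those references or their probabilistic models; what the paper's citation buys is brevity and consistency with the genericity framework it already invokes elsewhere (indeed you note exactly this alternative in your closing sentence, which is precisely the route the authors take).
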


Lemma~\ref{wordsoccuratmostonce} implies in particular that generically there is very little cancellation between the $v_i(\underline{a})^{\pm 1}$, and thus $U_i$ is essentially just the concatenation of the $v_i(\underline{a})^{\pm 1}$ introduced. The following statement is a straightforward consequence of standard small cancellation arguments; see~\cite{AO,A,KS,M} for similar arguments.

\begin{lem}\label{lemU_i} Generically the following hold:
\begin{enumerate}
\item $|U_i|\ge N^2(1-\frac{1}{10^{10}n})$ for $1\le i\le n$.
\item If $W$ is a subword of length $N^2\cdot \frac{1}{10^{10}n}$ of both $U_i^ {2\varepsilon_1}$ and  $U_j^ {2\varepsilon_2}$ whose initial letter lies in the first half of $U_i^ {2\varepsilon_1}$, respectively $U_j^ {2\varepsilon_2}$, then $i=j$, $\varepsilon_1=\varepsilon_2$ and the two occurrences of $W$ in $U_i^ {\varepsilon_1}$ coincide, i.e. start at the same letter of $U_i^ {\varepsilon_1}$. 
\end{enumerate}
\end{lem}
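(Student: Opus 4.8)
The proof has two ingredients: a structural analysis showing that, generically, assembling $U_i$ out of the blocks $v_j(\underline a)^{\pm 1}$ involves almost no cancellation (this yields (1)), and a reduction of (2) to standard genericity statements about the tuples $(v_1,\dots,v_n)$ over $F(A)$ and $(u_1,\dots,u_n)$ over $F(B)$. Fix once and for all a constant $\alpha=\alpha(n)$ small compared with $\frac1{10^{10}n}$, say $\alpha=\frac1{10^{12}n}$, and a constant $C=C(n)$, and let $\mathcal T$ be the intersection of the following generic sets: the set furnished by Lemma~\ref{wordsoccuratmostonce} for $\alpha$ (and for $\alpha/2$); the set of tuples for which no $v_i$ or $u_i$ has a constant run of length $\ge C\log N$ at either end; the set for which no $v_i$ has a subword of length $\ge N/2$ with a period smaller than its own length by more than $C\log N$; and the set --- a standard genericity statement of the kind proved in \cite{M,AO,A,KS} and already reflected in Lemma~\ref{wordsoccuratmostonce} --- for which no reduced word over $B^{\pm1}$ of length $\ge\alpha N/2$ occurs as a cyclic subword of two distinct $u_i^{\pm1}$, or twice as a cyclic subword of a single $u_i^{\pm1}$ at distinct positions. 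Since this is a finite intersection of generic sets, $\mathcal T$ is generic, and because genericity only concerns $N\to\infty$ we may freely assume $N$ exceeds any fixed bound needed below.

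\emph{Part (1).} Write $u_i=b_{j_1}^{\epsilon_1}\cdots b_{j_N}^{\epsilon_N}$, so $U_i$ is the free and cyclic reduction of the length-$(N^2+1)$ word $v_{j_1}^{\epsilon_1}\cdots v_{j_N}^{\epsilon_N}a_i^{-1}$. At an internal junction $v_{j_s}^{\epsilon_s}v_{j_{s+1}}^{\epsilon_{s+1}}$, if $z$ is the cancelled word then $z$ is a suffix of $v_{j_s}^{\epsilon_s}$ and, since $z^{-1}$ is a prefix of $v_{j_{s+1}}^{\epsilon_{s+1}}$, also a suffix of $v_{j_{s+1}}^{-\epsilon_{s+1}}$; were $|z|\ge\alpha N$, Lemma~\ref{wordsoccuratmostonce} would force $(j_s,\epsilon_s)=(j_{s+1},-\epsilon_{s+1})$, contradicting that $u_i$ is freely reduced. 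The junction with the trailing $a_i^{-1}$ removes at most one letter, and the same argument bounds the cyclic cancellation by $\alpha N$ unless $j_1=j_N$ and $\epsilon_1=-\epsilon_N$, in which case it is at most the length of the initial constant run of $v_{j_1}^{\epsilon_1}$, hence $<C\log N$. Summing, free and cyclic reduction delete fewer than $2(N-1)\alpha N+C\log N+2<3\alpha N^2$ letters for $N$ large, so $|U_i|>N^2(1-3\alpha)>N^2\bigl(1-\tfrac1{10^{10}n}\bigr)$, which is (1).

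\emph{Part (2).} Set $M=\frac{N^2}{10^{10}n}$; by (1) we have $N\ll M<|U_i|$, so a length-$M$ subword of $U_i^{2\varepsilon_1}$ beginning in its first half is a cyclic subword $W$ of $U_i^{\varepsilon_1}$, and by the junction bounds from Part~(1) the cyclic word $U_i^{\varepsilon_1}$ is a concatenation of $N$ blocks --- copies of $v_{j_1}^{\pm1},\dots,v_{j_N}^{\pm1}$, each truncated by $<\alpha N$ at each end --- separated at one place by a defect of length $\le C\log N$ coming from $a_i^{-1}$. Since $M/N\to\infty$, such a $W$ contains $r\ge M/N-3\ge\alpha N$ complete blocks $B_1,\dots,B_r$ in order; by Lemma~\ref{wordsoccuratmostonce} the content of $B_t$ determines a label $\ell_t=b_{k_t}^{\delta_t}\in B^{\pm1}$, and the label word $\mathcal L=\ell_1\cdots\ell_r$ is intrinsic to $W$ and is realized as a cyclic subword of $u_i^{\varepsilon_1}$. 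Suppose $W$ also occurs in $U_j^{2\varepsilon_2}$, hence is a cyclic subword of $U_j^{\varepsilon_2}$. For each $t$ let $\beta_t\subseteq W$ be the central length-$\lceil N/2\rceil$ subword of $B_t$; then $\beta_t$ is a subword of $v_{k_t}^{\delta_t}$ lying at distance $\ge N/5$ from each end of $v_{k_t}^{\delta_t}$. If $\beta_t$ were not contained in a single block of $U_j^{\varepsilon_2}$, then, being too short to contain a full block, it would possess a subword $S$ of length $\ge N/5$ that is a genuine prefix or suffix of some block $v_c^\mu$ of $U_j^{\varepsilon_2}$ (a straddle of the defect forces the same); but $S\subseteq\beta_t\subseteq v_{k_t}^{\delta_t}$, so Lemma~\ref{wordsoccuratmostonce} would give $v_c^\mu=v_{k_t}^{\delta_t}$ with a unique occurrence of $S$, contradicting that $S$ sits simultaneously within $O(\alpha N)+O(\log N)$ of an end of $v_{k_t}^{\delta_t}$ and in its central region. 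Hence each $B_t$ lies in a single block of $U_j^{\varepsilon_2}$, with matching label $\ell_t$; comparing positions inside $W$ (consecutive $\beta_t$ are $\approx N$ apart, blocks have length $\le N$, and $W$ meets the defect at most once) shows $B_1,\dots,B_r$ match consecutive blocks of $U_j^{\varepsilon_2}$ in order, so $\mathcal L$ is also a cyclic subword of $u_j^{\varepsilon_2}$. The genericity of $(u_1,\dots,u_n)$ built into $\mathcal T$ --- applied to the at-most-one non-wrapping half of $\mathcal L$, of length $\ge\alpha N/2$ --- now forces $i=j$, $\varepsilon_1=\varepsilon_2$ and the two occurrences of $\mathcal L$ in $u_i$ to coincide. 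Finally, if the two occurrences of $W$ in $U_i^{2\varepsilon_1}$ began at different letters $p_1\ne p_2$ of $U_i^{\varepsilon_1}$, then $U_i^{\varepsilon_1}$ would have a nonzero period $|p_1-p_2|$ along an interval of length $M$; coincidence of the $\mathcal L$-occurrences forces this period to be $<N$, whence some full block of $U_i^{\varepsilon_1}$ together with its neighbour would have a period $<N$, contradicting Lemma~\ref{wordsoccuratmostonce} and the no-long-period condition. Thus $p_1=p_2$, completing the proof of (2) (and hence of the lemma, with the generic set $\mathcal T$ above).

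Conceptually the argument is routine; the work lies entirely in the bookkeeping of Part~(2) --- ruling out the various straddling configurations (including straddling the $a_i^{-1}$-defect and the cyclic seam), tracking the orientations $\varepsilon_1,\varepsilon_2$ when passing between $W$, the $v$-blocks and the words $u_i$, and excluding near-periodic behaviour. I expect this to be the only real obstacle, and the point that keeps it manageable is to have chosen $\alpha$ so small relative to $\frac1{10^{10}n}$ that every source of slack (at most $\alpha N$ per block junction, at most $C\log N$ per defect) remains comfortably below the thresholds appearing in the statement.
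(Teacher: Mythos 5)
The paper does not actually give a proof of this lemma: it declares it ``a straightforward consequence of standard small cancellation arguments'' following the observation that Lemma~\ref{wordsoccuratmostonce} forces very little cancellation when the $v_j(\underline{a})^{\pm1}$ are concatenated to build $U_i$. Your argument carries out precisely that programme and is, as far as I can tell, correct: Part~(1) bounds each internal junction cancellation by $\alpha N$ via Lemma~\ref{wordsoccuratmostonce} (long overlap would force $j_s=j_{s+1}$, $\epsilon_s=-\epsilon_{s+1}$, contradicting reducedness of $u_i$), and your careful treatment of the cyclic seam --- including the subtle $j_1=j_N$, $\epsilon_1=-\epsilon_N$ case where the cancellation reduces to a constant-run bound rather than to Lemma~\ref{wordsoccuratmostonce} --- is exactly the point that makes this less ``straightforward'' than the paper lets on. Part~(2) is also sound: the block/label bookkeeping via the central subwords $\beta_t$ together with Lemma~\ref{wordsoccuratmostonce} pins down the label word $\mathcal L$, and the periodicity argument correctly rules out a nontrivial shift. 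One thing worth flagging is that you correctly recognised that Lemma~\ref{wordsoccuratmostonce} alone is not quite enough for Part~(2) --- since $W$ and hence $\mathcal L$ live as \emph{cyclic} subwords, one needs a cyclic uniqueness statement for the $u_i$, which you added to your generic set; this is a genuine (if minor and equally standard) supplement to what the paper records. Your extra ``no-long-period'' hypothesis on the $v_i$ is harmless but redundant, since Lemma~\ref{wordsoccuratmostonce} (uniqueness of a long occurrence) already forbids a long nearly-periodic stretch of $v_i$.
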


We refer the reader to \cite{LS,O,Stre} for basic results of small cancellation theory and basic properties of small cancellation groups.

Lemma~\ref{wordsoccuratmostonce}  implies that for any fixed $L\ge 1$, generically, the above presentation (*) is a $C'(\frac{1}{L})$-small cancellation presentation (note that the defining relations in (*) are already cyclically reduced).  This implies that for any $\alpha\in [0,1)$ we may assume that for  any word in the $a_i^{\pm 1}$ and $b_i^{\pm 1}$ that represents the trivial element in $G$ there exists a subword $R$ such that  $|R|\ge \alpha |RT^{-1}|$ where $RT^{-1}$ is a cyclic conjugate of a defining relator. In this case we call the process that replaces the subword $R$ by (its complementary word) $T$ an \emph{$\alpha$-small cancellation move} (relative to the relator $RT^{-1}$). We will also consider $\alpha$-small cancellation moves relative to relators that are not cyclic conjugates of defining relators of (*); in particular relative to the $U_i$, the defining relators of (**).

\begin{lem}\label{wordsrepresentingb_i} Let $\alpha\in [0,1)$. Let $w$ be a reduced word in the $a_i^{\pm 1}$ such that $w=_Gb_i$. Then for generic $G$ either $w=v_i$ or $w$ admits a $\alpha$-small cancellation move relative to some cyclic conjugate of some $U_i^{\pm 1}$.

In particular $w$ can be transformed into $v_i$ by applying finitely many $\alpha$-small cancellation moves relative to cyclic conjugates of the relators $U_i^{\pm 1}$ and by free cancellation.
\end{lem}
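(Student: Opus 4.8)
The plan is to argue by a descent on the length $|w|$, using the small cancellation structure of presentation (**). First I would invoke the genericity setup: by Lemma~\ref{lemU_i} we may assume (**) is a $C'(1/L)$ presentation for $L$ as large as needed (in particular large enough that $\alpha < 1 - 3/L$, say). Suppose $w$ is a reduced word in the $a_i^{\pm 1}$ with $w =_G b_i$. Since $b_i =_G v_i(\underline a)$, the word $w v_i^{-1}$ represents the trivial element in $G$. If $w v_i^{-1}$ freely reduces to the empty word we are done, so assume $w \ne v_i$ in $F(A)$; then $w v_i^{-1}$ is a nonempty word representing $1$ in $G$, and Greendlinger's lemma for $C'(1/6)$ presentations produces a cyclic conjugate $R T^{-1}$ of some $U_j^{\pm 1}$ and a subword $R$ of the cyclically reduced form of $w v_i^{-1}$ with $|R| > (1 - 3/L)|R T^{-1}|$, hence $|R| \ge \alpha |R T^{-1}|$.

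The main technical point is to ensure this Greendlinger subword $R$ can be realized \emph{inside $w$ itself}, rather than straddling the $v_i^{-1}$ portion or wrapping around the cyclic seam. Here is where I would use genericity more carefully. The word $v_i$ has length $N$, while $|U_j| \ge N^2(1 - 10^{-10}/n)$ by Lemma~\ref{lemU_i}(1), so $v_i^{-1}$ is vastly shorter than a large piece of $R$; concretely, $|R| \ge \alpha|U_j| \gg 2N \ge 2|v_i|$. Thus $R$ cannot be contained in the $v_i^{-1}$ part, and the portion of $R$ lying in $w$ still has length $\ge |R| - 2N \ge \alpha'|RT^{-1}|$ for a slightly smaller $\alpha'$; by choosing $L$ even larger at the outset we arrange $\alpha' \ge \alpha$. (If $w v_i^{-1}$ required cyclic reduction, the part cancelled is a suffix of $w$ matched against a prefix coming from $v_i^{-1}$, which again costs at most $N$ letters, absorbed the same way.) Hence $R$, up to trimming $\le 2N$ letters at its two ends, is an honest subword of the reduced word $w$, and is more than a $(1-3/L)$-fraction of a cyclic conjugate of some $U_j^{\pm 1}$. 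Replacing $R$ by its complement $T$ is then, by definition, an $\alpha$-small cancellation move on $w$ relative to a cyclic conjugate of $U_j^{\pm 1}$.

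This proves the first assertion. For the ``in particular'' clause, perform the move: let $w'$ be the freely reduced form of the result. Because $|R| \ge \alpha|RT^{-1}| > \tfrac12|RT^{-1}|$ we have $|T| < |R|$, so the surgery strictly decreases length even after free cancellation; moreover $w' =_G w =_G b_i$ and $w'$ is again a reduced word in the $a_i^{\pm 1}$, so the same hypotheses apply to $w'$. Iterating, we get a strictly length-decreasing sequence $w = w_0, w_1, w_2, \dots$ of words all equal to $b_i$ in $G$, each obtained from the previous by an $\alpha$-small cancellation move relative to a cyclic conjugate of some $U_j^{\pm 1}$ together with free cancellation. Since lengths are positive integers the process terminates, and the only way it can terminate is $w_m = v_i$ (any $w_m \ne v_i$ admits a further move by the first part). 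This yields the claimed transformation of $w$ into $v_i$.

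\textbf{Anticipated obstacle.} The delicate step is the length bookkeeping that guarantees the Greendlinger piece genuinely lives inside $w$ with a fraction still at least $\alpha$ of the relator. This is where one must use that $N$ (the length of the $v_i,u_i$) is negligible compared to $N^2 \sim |U_j|$, so the ``boundary effects'' of the $v_i^{-1}$ tail and the cyclic seam are a vanishing proportion of any long Greendlinger subword; one should fix $L$ (equivalently the $C'(1/L)$ strength) large enough up front — depending only on $\alpha$ — that after subtracting a $2N$-letter margin from a piece of length $\ge (1-3/L)|U_j| \ge (1-3/L)N^2(1-10^{-10}/n)$ one still has a fraction $\ge \alpha$ of $|U_j|$. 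The rest is routine small cancellation theory and an integer-descent argument.
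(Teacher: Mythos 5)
Your proposal takes the route the paper explicitly acknowledges but declines to pursue: it applies Greendlinger's lemma to the relator set $\{U_1,\dots,U_n\}$ of presentation (**) and to the trivial word $wv_i^{-1}$, rather than (as the paper does) running Dehn's algorithm on $b_i^{-1}w$ in presentation (*) and tracking when moves relative to the relators $b_j^{-1}v_j(\underline a)$ introduce $b_j^{\pm 1}$-letters that cannot later be removed. Your route sidesteps that $b$-letter bookkeeping and replaces the paper's somewhat informal "perform moves as long as possible'' narrative with a clean integer length descent; what it buys is a more transparent argument, at the cost of having to engage directly with the long relators $U_j$ of length $\approx N^2$ and the fact that only a small tail of $wv_i^{-1}$ comes from $v_i^{-1}$. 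Both routes ultimately rest on the same Greendlinger-type input, so this is a genuine but modest variation.

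There is one step that is not quite right as written. Greendlinger's lemma, in its standard form, produces a subword of \emph{some cyclic conjugate} of the cyclically reduced form $z'$ of $wv_i^{-1}$, i.e.\ an arc of the cyclic word. Your "$|R|-2N$'' bound correctly absorbs the case where this arc merely sticks into the short $v_i^{-1}$-portion at one end, but it does not handle the case where the arc wraps all the way across the (at most $N$-letter) $v_i^{-1}$-arc: then $R$ minus the $v_i^{-1}$-letters is \emph{two non-contiguous} subwords of $w$ (a terminal and an initial segment of the surviving $w$-arc of $z'$), and all you can conclude is that one of them has length at least $(|R|-N)/2$, which is roughly $|U_j|/2$ and well short of $\alpha|U_j|$ for $\alpha$ near $1$. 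This is a standard point and is fixable with an explicit case split: in a reduced $C'(\lambda)$ diagram with at least two faces there are at least two disjoint Greendlinger arcs, at most one of which can contain the single short $v_i^{-1}$-arc of $\partial D$, so one of them already lies (after trimming at most $N$ letters) contiguously inside $w$; and if the diagram has a single face then $z'$ is itself a cyclic conjugate of some $U_j^{\pm 1}$, so the $w$-arc of $z'$, of length at least $|U_j|-N\ge\alpha|U_j|$, is the required subword. With that insertion your argument goes through, and the remaining choices ($\lambda$ small relative to $1-\alpha$, $N$ large so that $N\ll(1-\alpha)|U_j|$, and using $\alpha>\tfrac12$ for the descent in the "in particular'' clause) are as you describe.
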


\begin{proof} This lemma can also be proven using the small cancellation properties of (**); however we argue using the presentation (*). 

The above discussion implies that the word $b_i^{-1}w$ can be transformed into the trivial word by replacing subwords that are almost whole relators by the complementary word of the relator. If $w\neq v_i$ then this process can initially only involve small cancellation moves relative to the relators $b_i^{-1}v_i(\underline{a})$ as $w$ is a word in the $a_i$ and therefore $b_i^{-1}w$ does not contain long subwords of the relators $a_i^{-1}u_i(\underline{b})$. 

These initial moves introduce  letters of type $b_j^{\pm 1}$ and possible remove the initial letter $b_i^{-1}$. Subsequent move of that type cannot  remove the $b_j^{\pm 1}$ that were introduced as otherwise the original word $w$ would have been non-reduced contradicting the hypothesis.

We perform these moves as long as possible, in the end we must have a long word in the $b_i$ with at most one $a_i$ in between that allows for a small cancellation move with respect to some relation $a_i^{-1}u_i(\underline{b})$ but then before the original moves almost all of a cyclic conjugate of $U_i$ must have occurred as a subword, indeed the origianl subword can be obtainded by resubstituing the $v_i(\underline{a})$ for the $b_i$, which gives almost all of some $U_i$ by definition of the $U_i$.
\end{proof}

We also need to establish several basic algebraic properties of groups
$G$ given by generic presentation (*). The proof of the following theorem follows the argument of Arzhantseva
and Ol'shanskii~\cite{AO,A} (see also \cite{KS}).

\begin{thm}\label{thm:free}
Let $n\ge 2$. Then for the group $G$ given by a generic presentation (*)
the following hold:
\begin{enumerate}
\item Every subgroup of $G$, generated by $\le n-1$ elements of $G$, is
  free.
\item $rank(G)=n$.

\item $G$ is torsion-free, word-hyperbolic and one-ended.

\end{enumerate}

\end{thm}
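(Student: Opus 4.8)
The plan is to work throughout with the streamlined presentation (**), which by Lemma~\ref{lemU_i} is, for any prescribed $\lambda>0$, generically a $C'(\lambda)$ presentation; we fix such a $\lambda\le 1/6$. Classical small cancellation theory (see \cite{LS,Stre}) then gives at once that $G$ is word-hyperbolic, that the presentation $2$-complex $X$ of (**) is aspherical, and that $G$ is torsion-free (a generic $U_i$ is not a proper power, and a $C'(1/6)$ presentation none of whose relators is a proper power has torsion-free fundamental group). Greendlinger's lemma also shows that no freely reduced nontrivial word of length less than $\tfrac12\min_i|U_i|$ equals $1$ in $G$, so $a_1\ne_G 1$ and $G$ is nontrivial; since $X$ is aspherical, $\chi(G)=\chi(X)=1-n+n=1$, which for a nontrivial group is incompatible with being free ($\chi(F_r)=1-r$), so $G$ is not free; and a non-free group with a $2$-dimensional $K(G,1)$ is infinite. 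This settles word-hyperbolicity and torsion-freeness; one-endedness is deferred until parts (1) and (2) are in hand.

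The substance of the theorem is part (1), which I would prove following Arzhantseva and Ol'shanskii \cite{AO,A} (compare \cite{KS}). It suffices to show that for every finite connected folded core graph $\Gamma$ with $b(\Gamma)\le n-1$ equipped with a morphism $p\colon\Gamma\to R_n$, the induced homomorphism $\pi_1(\Gamma)\to G$ is injective: a subgroup $H\le G$ generated by $m\le n-1$ elements is represented, by folding a wedge of $m$ labelled loops, by such a $\Gamma$ with $b(\Gamma)\le m\le n-1$, and then $H\cong\pi_1(\Gamma)\cong F_{b(\Gamma)}$ is free. The point of the Betti bound is that $\Gamma$ can contain no subgraph which is a degree-$1$ or degree-$2$ cover of $R_n$ (these have first Betti number $n$ and $2n-1$), so that Theorem~\ref{readingranndomwordsingraphs} (and the structural result Theorem~\ref{notallpathslift} underlying it) apply to $\Gamma$. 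Suppose some such $\pi_1(\Gamma)\to G$ is not injective, and choose a counterexample $(\Gamma,\gamma)$ of minimal complexity, where $\gamma$ is a reduced loop in $\Gamma$ whose label $w\in F(A)$ (freely reduced, since $\Gamma$ is folded) is nontrivial with $w=_G 1$. Then $w$ lies in the normal closure of the $U_i$, so there is a reduced van Kampen diagram $D$ over (**) with $\partial D\equiv w$; by Greendlinger's lemma $w$ contains a subword $z$ that, up to an $O(\lambda)$-fraction of its length, is a cyclic conjugate $U_*$ of some $U_j^{\pm1}$, say $U_*\equiv zT$ with $|T|$ a tiny fraction of $|U_j|$. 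As $z$ is essentially a random word, Lemma~\ref{wordsoccuratmostonce} (unique occurrences of long subwords) together with Theorem~\ref{readingranndomwordsingraphs} forces the subpath of $\gamma$ that reads $z$ to traverse, up to an $O(\lambda)$-error, a single embedded arc $Q_z$ of $\Gamma$ that is disjoint from the rest of $\gamma$. Replacing $Q_z$ by a fresh arc reading $T^{-1}$ with the same endpoints, then refolding and freely reducing, yields a pair $(\Gamma',w')$ with $b(\Gamma')=b(\Gamma)\le n-1$, with $w'$ freely reduced, nontrivial and $w'=_G 1$, and of strictly smaller complexity since $|T|\ll|z|$ --- contradicting minimality. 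This proves part (1).

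Parts (2) and one-endedness are then formal. The presentation (**) gives $rank(G)\le n$; if $rank(G)\le n-1$ then $G$ is a subgroup of itself generated by $\le n-1$ elements, hence free by part (1), contradicting that $G$ is not free, so $rank(G)=n$. For one-endedness: $G$ is infinite and torsion-free, and $rank(G)=n\ge 2$ excludes $G\cong\mathbb Z$, so by Stallings' theorem $G$ is one-ended unless $G=A*B$ with $A,B$ nontrivial; but then Grushko's theorem gives $rank(A)+rank(B)=rank(G)=n$, so $rank(A),rank(B)\le n-1$, whence $A$ and $B$ are free by part (1), making $G$ free --- a contradiction. Hence $G$ is one-ended, completing part (3).

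The main obstacle is the genericity-controlled surgery in part (1): showing that a Greendlinger arc of a random relator, when read inside a folded core graph $\Gamma$ of first Betti number at most $n-1$, must be traced along a single embedded arc disjoint from the rest of the loop, so that pushing $\partial D$ across the corresponding $2$-cell can be mirrored by an arc-replacement on $\Gamma$ that lowers its combinatorial complexity without raising $b(\Gamma)$ and without destroying non-injectivity. Here the bound $b(\Gamma)\le n-1$ is decisive: it forces $\Gamma$ to contain no degree-$1$ or degree-$2$ cover of $R_n$ (whose Betti numbers are $n$ and $2n-1$), which is exactly the hypothesis under which Theorems~\ref{notallpathslift} and~\ref{readingranndomwordsingraphs} guarantee that lifts of generic words are nearly injective; combined with the uniqueness-of-long-subwords input of Lemma~\ref{wordsoccuratmostonce}, this yields the embeddedness and disjointness of the Greendlinger arc. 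The bookkeeping runs closely parallel to the surgery argument underlying Theorem~\ref{thm:A}.
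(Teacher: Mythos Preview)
Your handling of parts (2) and (3) is valid and arguably cleaner than the paper's: the Euler-characteristic computation $\chi(G)=1$ from asphericity of the $C'(1/6)$ complex for (**) shows at once that $G$ is not free, so $rank(G)=n$ becomes an immediate corollary of part (1), and one-endedness via Stallings--Grushko is identical to the paper's argument. The paper instead proves $rank(G)=n$ by a second surgery argument parallel to that of (1), and only afterwards deduces non-freeness from Hopfianity of $F_n$; your reordering buys a shorter proof of (2).

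There is, however, a genuine gap in your argument for (1). You work over (**), so the Greendlinger subword $z$ has length close to $|U_j|\approx N^2$, and you invoke Theorem~\ref{readingranndomwordsingraphs} to conclude that the path reading $z$ in $\Gamma$ is nearly an embedded arc. But that theorem produces a \emph{generic} set $S\subset\Omega$ of reduced words with the stated lifting property; it says nothing about any particular word unless one knows it lies in $S$. The relators $U_j$ are not uniformly random words of length $\approx N^2$: each is a concatenation of the random $v_i^{\pm1}$ in the pattern dictated by $u_j$, a highly constrained block structure. Your assertion that ``$z$ is essentially a random word'' is exactly what would need proof, and establishing that long $U$-subwords read $\alpha$-injectively in a low-Betti graph requires the two-level analysis carried out later in Lemmas~\ref{L:Claim} and~\ref{wecanshorten} (apply Theorem~\ref{readingranndomwordsingraphs} first to each $v$-syllable, then to the induced $B$-word in a collapsed graph), which is far more than you sketch. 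The paper avoids this entirely by running the proof of (1) over presentation (*) with labels in $A\cup B$: the relators there have length $N+1$, and a Greendlinger subword contains a piece of length $\ge\tfrac{9}{20}|r|$ of a single genuinely random $v_i$, to which Theorem~\ref{readingranndomwordsingraphs} applies directly. The paper also takes minimality on the number of edges of $\Gamma$ alone rather than on a pair $(\Gamma,\gamma)$, which sidesteps the need to argue that the surgered loop label $w'$ stays nontrivial in $F(A)$.
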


\begin{proof}

By genericity of (*), given any $0<\lambda<1$ we may assume that $(*)$
satisfies the $C'(\lambda)$ small cancellation condition.

(1) Choose a subgroup $H\le G$ of rank $\le n-1$.
Thus $H=\langle h_1,\dots, h_k\rangle\le
G$ for some $h_1,\dots, h_k\in G$ such that $k\le n-1$. 

Therefore there exists a finite connected graph $\Gamma$ (e.g. a
wedge of $k$ circles labelled by reduced words in $A\cup B$ representing
$h_1,\dots, h_k$)  with
positively oriented edges
labelled by elements of $A\cup B$ such that the first Betti number of
$\Gamma$ is $\le k$ and such that for the natural ``labeling
homomorphism'' $\mu: \pi_1(\Gamma)\to G$ the subgroup
$\mu(\pi_1(\Gamma))\le G$ is conjugate to $H$ in $G$. Among all 
such graphs choose the graph $\Gamma$ with the smallest number of
edges. Then, by minimality $\Gamma$ is a folded core graph.

Since $b(\Gamma)\le n-1$, Theorem~\ref{readingranndomwordsingraphs} is
applicable to $\Gamma$.

If $\mu:\pi_1(\Gamma)\to G$ is injective, then $H\cong\pi_1(\Gamma)$
is free, as required. Suppose therefore that $\mu$ is
non-injective. Then there exists a nontrivial closed circuit in
$\Gamma$ whose label is a cyclically reduced word equal to $1$ in
$G$. Therefore there exists a path $\hat\gamma$ in $\Gamma$ such that the
label $\hat z$ of $\hat\gamma$  is a subword of some a cyclic permutation of
some defining relation $r$ of (*) and that $|\hat z|\ge (1-3\lambda)|r|\ge
(1-3\lambda)N$. 

Without loss of generality, and up to a possible re-indexing, we may assume that $r$ is a cyclic
permutation of $b_1^{-1}v_1(\underline{a})$. Then $\hat z$ contains a
subword $z$ such that $z$ is also a subword of $v_1$ and that
$|z|\ge (\frac{1}{2}-\frac{3}{2}\lambda)|r| -1$. 
Thus we may assume that $|z|\ge \frac{9}{20}|r|$.

Let $\gamma$ be the
subpath of $\hat \gamma$ corresponding to the occurrence of $z$ in
$\hat z$. This implies that $|\gamma|=|z|\ge  \frac{9}{20}|r|$, so
that $|r|\le \frac{20}{9}|\gamma|$.

By genericity of (*) and by
Theorem~\ref{readingranndomwordsingraphs} we may assume, given an
arbitrary $0< \alpha_0<1$, that $\hat\gamma$ is
$\alpha_0$-injective. Since $\gamma$ comprises almost a half of $\hat
\gamma$, it follows that almost all edges of $\gamma$ are
$\hat\gamma$-injective.  Therefore, for any fixed $0<\alpha<1$, no
matter how close to $1$, we may
assume that the portion of $\gamma$ corresponding to
$\hat\gamma$-injective edges of $\gamma$ covers the length $\ge \alpha|\gamma|$.
We choose $0<\alpha<1$ and $0<\lambda<1$ so that $(9n-12)\lambda \frac{20}{9}<\alpha$.

There are at most $3(n-1)-3=3n-6$ maximal arcs of
$\Gamma$. These maximal arcs subdivide $\gamma$ as a concatenation of
non-degenerate arcs $\gamma=\gamma_0'\gamma_1'\dots \gamma_m'$ where for $i\ne 0,m$
$\gamma_i'$ is a maximal arc of $\Gamma$ and for $i=0,m$ $\gamma_i'$ is
either maximal or semi-maximal arc in $\Gamma$. The paths
$\gamma_0'$ and $\gamma_m'$ may, a priori, overlap in $\Gamma$ which
happens if the initial vertex of $\gamma$ is contained in the
interior of $\gamma_m'$ and the terminal vertex of $\gamma$ is
contained in the interior of $\gamma_0'$. By subdividing $\gamma_0'$ and
$\gamma_m'$ along the initial and terminal vertices of $\gamma$ if
needed, we obtain a decomposition of $\gamma$ as a concatenation of
nondegenerate arcs of $\Gamma$
\[
\gamma=\gamma_0\dots \gamma_k
\]
such that for $2\le i\le k-3$ the path $\gamma_i$ is a maximal arc of $\Gamma$
and such that for $i\ne j$ either $\gamma_i=\gamma_j^{\pm 1}$ or
$\gamma_i$ and $\gamma_j$ have no topological edges in common.
Let $\tau_1,\dots, \tau_q$ be all the distinct arcs in $\Gamma$ given
by the paths $\gamma_0\dots \gamma_k$
Since $\Gamma$ has $\le 3n-6$ maximal arcs, we have $q\le 3n-4$. 
Recall that the sum of the lengths of $\gamma_i$ such that the arc
$\gamma_i$ is $\hat\gamma$-injective, is $\ge \alpha
|\gamma|$. 

Suppose first that there exists $i_0$ such that $\gamma_{i_0}$ is
$\hat\gamma$-injective and that $|\gamma_{i_0}|> 3\lambda |r|$. Then we
remove the arc corresponding to $\gamma_{i_0}$ from $\Gamma$ and add
an arc $\tau$ from the terminal vertex of $\hat\gamma$ to the initial
vertex of $\hat\gamma$ with label $\hat y$ such that $\hat z\hat y$ is
a cyclic permutation of $r$.  Thus $|\hat y|=|\tau|<
3\lambda|r|$. Denote the new graph by $\Gamma'$. Then the image of the
labelling homomorphism $\pi_1(\Gamma')\to G$ is still conjugate to
$H$, but the number of edges in $\Gamma'$ is smaller than the number
of edges in $\Gamma$. This contradicts the minimal choice of
$\Gamma$. Therefore no $i_0$ as above exists.

Thus for every $i$ such that $\gamma_i$ is a $\hat\gamma$-injective
arc we have $|\gamma_i|\le 3\lambda |r|$. Hence the total length
covered by the $\hat\gamma$-injective $\gamma_i$ is 
\[
\le (3n-4)\cdot 3\lambda |r| \le (9n-12)\lambda \frac{20}{9}|\gamma| <
\alpha |\gamma|,
\]
yielding a contradiction.

Therefore $\mu:\pi_1(\Gamma)\to G$ is injective, and $H\cong
\pi_1(\Gamma)$ is free, as required.

(2) From presentation(*) we see that $G=\langle a_1,\dots, a_n\rangle$. Thus $rank(G)\le n$. Suppose that $k:=rank(G)<n$. Then, by part (1), $G$ is free
of rank $k<n$.

Hence there exists a graph $\Gamma$ with edges labelled by elements of
$A\cup B$ such that the first betti number of $\Gamma$ is $\le k$ and
such that the image of the labelling homomorphism
$\mu:\pi_1(\Gamma)\to G$ is equal to $G$. Among all such graphs choose
$\Gamma$ with the smallest number of edges. By minimality, $\Gamma$ is
a folded core graph.

Since the first betti
number of $\Gamma$ is $\le k<n$, there exists a letter $x\in A\cup B$
such that $\Gamma$ has no loop-edge labelled by $x$. Without loss of
generality we may assume that $x=a_1$. Since the image of $\mu$ is
equal to $G$, there exists a reduced path $\tilde\gamma$ in $\Gamma$ of length $>2$
whose label $\tilde z$ is a freely reduced word equal to $a_1$ in $G$.  Thus $\tilde za_1^{-1}=_G
1$. Whether or not the word $\tilde za_1^{-1}$ is freely reduced, it follows
that $\tilde\gamma$ contains a subpath $\gamma$ with label $z$ such
that $z$ is also a subpath of a cyclic permutation of some defining
relation $r$ of (*) with $|\gamma|=|z|\ge (1-3\lambda)|r|-1$. Then
exactly the same argument as in the proof of (1)
yields a contradiction with the minimal choice of $\Gamma$. 

Thus $rank(G)<n$ is impossible, so that $rank(G)\ge n$. Since we have
already seen that $rank(G)\le n$, it follows that $rank(G)=n$, as required.

(3) The group $G$ is given by presentation (**) as a proper quotient of $F_n$. Since $F_n$ is Hopfian, it follows that $G$ is not isomorphic to $F_n$.  The fact that $rank(G)=n$ implies that $G$ is not isomorphic to $F_r$ for any $1\le r\le n-1$. Thus $G$ is not free. 

Also, since (*) is a $C'(1/6)$ small cancellation presentation for $G$ where the defining relators are not proper powers, basic small cancellation theory~\cite{LS} implies that $G$ is a torsion-free non-elementary word-hyperbolic group. 

We claim that $G$ is one-ended. Indeed, suppose not. Since $G$ is torsion free and not cyclic, Stalling's theorem then implies that $G$ is freely decomposable, that is $G=G_1\ast G_2$ where both $G_1$ and $G_2$ are nontrivial. Therefore by Grushko's Theorem, $n=rank(G)=rank(G_1)+rank(G_2)$ and hence $1\le rank(G_1), rank(G_2) \le n-1$. Part (1) of the theorem then implies that $G_1$ and $G_2$ are free. Hence $G=G_1\ast G_2$ is also free, which contradicts part (2).
Thus $G$ is indeed one-ended, as claimed.\end{proof}

\begin{rem}
The proof of Theorem~\ref{thm:free} can be elaborated further,
following a similar argument to that of \cite{KS}, to prove the
following: For $G$ given by a generic presentation (*), 
if $g_1,\dots, g_n\in G$ are such that $\langle g_1,\dots,
g_n\rangle =G$ then $(g_1,\dots, g_n)\sim_{NE} (a_1,\dots, a_n)$ or
$(g_1,\dots, g_n)\sim_{NE} (b_1,\dots, b_n)$ in $G$.
\end{rem}

\section{Reduction moves}\label{reductionmoves}

Recall that for $A=\{a_1,\dots, a_n\}$ we think of $F(A)$ as the set of all freely reduced words over $A^{\pm 1}$.

We say that a word $z\in F(A)$ is a \emph{$U$-word} if $z$ is a subword of of a power of some $U_i^{\pm 1}$ (where $U_i$ are the defining relators for the presentation (**)).
Recall that by Lemma~\ref{lemU_i}  any $U$-word of length at least $N^2\cdot \frac{1}{10^{10}n}$ is a subword of a power of only one $U_i^ {\pm 1}$. We say that a word $V$ is \emph{$U$-complementary} to a $U$-word $W$ if $WV^{-1}$ is a power of a cyclic conjugate of some $U_i^{\pm 1}$. Note that $V$ is then also a $U$-word. Note also that for a given $U$-word $W$, its $U$-complementary word is not uniquely defined. Thus if $W$ is an initial segment of $U_*^k$, where $k\ne 0$ and $U_*$ is a cyclic permutation of $U_i^{\pm 1}$, and $V$ is $U$-complimentary to $W$ then for every integer $m$ such that $mk<0$ the word $U_*^mV$ is also $U$-complimentary to $W$.

\smallskip We now introduce a partial order on $F(A)$.  First, for $w\in F(A)$ we define $c_1(w)$ to be the minimum $k$ such that $w$ can be written as a reduced product of type $w_1\cdot \ldots \cdot w_k$ where each $w_i$ is a $U$-word. We call any such decomposition of $w$ as a product of $c_1(w)$ $U$-words an \emph{admissible decomposition} and we call the words $w_1,\dots, w_k$ the \emph{$U$-factors} of this admissible decomposition.

\smallskip The number $c_1(w)$ can be interpreted geometrically in the Cayley complex $CC$ of the presentation (**): If $k=c_1(w)$, $w_1\cdot \ldots \cdot w_k$ is as above and  $\gamma_w$ is a path reading $w$ then each $w_i$ is read by a subpath of $\gamma_w$ that travels (possibly with multiplicity) along the boundary of a 2-cell of $CC$.

\smallskip Note that admissible decompositions are not unique. However the following statement shows that the degree of non-uniqueness for admissible decompositions can be controlled:

\begin{lem}\label{uniquenessofwi}
Let $w\in F(A)$, $k=c_1(w)$ and let $w=w_1\cdot\ldots\cdot w_k=w_1'\cdot\ldots\cdot w_k'$ be admissible decompositions of $w$.  Then the following hold:

\begin{enumerate}
\item The subwords $w_i$ and $w_i'$ overlap non-trivially in $w$ for all $i\in\{1,\ldots ,k\}$.
\item If $V$ is the minimal subword of $w$ that contains both $w_i$ and $w_i'$ then both $w_i$ and $w_i'$ contain the subword $V'$ obtained from $V$ by removing the initial and terminal subword of length $\frac{1}{10^{10}n}N^2$. 
\item If $w_i$ is subword of a power of $U_j^{\varepsilon}$ of length at least $\frac{1}{10^5n}N^2$ then $w_i'$ is also a subword of a power of $U_j^{\varepsilon}$ and the common subword of $w_i$ and $w_i'$ differs from $w_i$ and $w_i'$ by at most an initial and terminal segment of length $\frac{1}{10^{10}n}N^2$.
\end{enumerate}
\end{lem}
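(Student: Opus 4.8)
The plan is to reduce everything to the structural constraints on admissible decompositions coming from the small cancellation Lemma~\ref{lemU_i}. The three statements are really one statement applied with progressively finer thresholds, so I would prove them together by a single comparison argument. Fix the two admissible decompositions $w=w_1\cdots w_k=w_1'\cdots w_k'$ of $w$, both of minimal length $k=c_1(w)$. Write $w_i$ as occupying positions $[p_{i-1}+1,p_i]$ in $w$ and $w_i'$ occupying $[q_{i-1}+1,q_i]$, with $0=p_0<p_1<\dots<p_k=|w|$ and similarly for the $q_i$. First I would record the key length bound: each $U$-word $w_i$ has length at most (slightly more than) $2\max_j|U_j|$, but more importantly each has length $\le N^2+N^2/(10^{10}n)$ or so, since a $U$-word that is genuinely long — of length $\ge N^2/(10^{10}n)$ — is a subword of a power of a \emph{unique} $U_j^{\pm 1}$ by Lemma~\ref{lemU_i}(2); I will not need a precise constant, only that a $U$-word cannot be ``much longer than $N^2$''. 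I would also note the trivial lower bound that forces $k$ to be what it is.

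The heart of (1) is a ``no gap, no big jump'' argument. Suppose $w_i$ and $w_i'$ did \emph{not} overlap, say $w_i$ lies entirely to the left of $w_i'$ (i.e.\ $p_i\le q_{i-1}$). Then the prefix $w_1'\cdots w_{i-1}'$ of $w$, which has length $q_{i-1}\ge p_i$, is a reduced product of $i-1$ $U$-words covering at least the first $p_i$ letters of $w$; hence $w_1\cdots w_i$, of length $p_i$, is also expressible using $\le i-1$ $U$-words. Symmetrically the suffix $w_{i+1}\cdots w_k$ of $w$ is covered by $w_{i+1}'\cdots w_k'$ using $\le k-i-1$ $U$-words (here I use $q_i\ge p_i$ or the mirror inequality — one of the two non-overlap possibilities gives this, and the other is handled by the left–right symmetric version). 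Concatenating gives an expression of $w$ as a reduced product of $\le (i-1)+(k-i-1)=k-2$ $U$-words — actually I must be slightly careful: splicing two $U$-word decompositions at an interior point may allow a merge but certainly does not cost more than one extra factor, so we get $\le k-1<k=c_1(w)$, contradicting minimality. This establishes (1). The main obstacle here is getting the bookkeeping on the splice point exactly right so that the count genuinely drops below $k$; one has to phrase it as: ``a reduced product realizing a prefix of $w$ of length $\ell$ using $m$ $U$-words, followed by one realizing the complementary suffix using $m'$ $U$-words, yields $w$ as a product of $\le m+m'$ $U$-words after at most one free-reduction/merge at the seam.''

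For (2), given that $w_i$ (positions $[p_{i-1}+1,p_i]$) and $w_i'$ (positions $[q_{i-1}+1,q_i]$) overlap, let $V$ be the minimal subword of $w$ containing both, i.e.\ $V=w[\,\min(p_{i-1},q_{i-1})+1,\ \max(p_i,q_i)\,]$. I would argue that $V$ itself is a $U$-word. Indeed, write (WLOG) $p_{i-1}\le q_{i-1}$; by (1) applied to all indices, the overlaps cascade, and one shows the union $w_i\cup w_i'$ (as a subword of $w$) is still a $U$-word because on the overlap $w_i$ and $w_i'$ agree and — here is where Lemma~\ref{lemU_i}(2) enters — if the overlap has length $\ge N^2/(10^{10}n)$ it pins down the ambient $U_j^{\pm 1}$ and the relative position uniquely, so $w_i$ and $w_i'$ are subwords of powers of the \emph{same} cyclic $U_*$ in \emph{compatible} positions, whence their union is too; if the overlap is short, $V$ has length $\le |w_i|+N^2/(10^{10}n)$, still a short enough $U$-word. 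Either way $V$ is a $U$-word, hence could replace both $w_i$ and $w_i'$; minimality of $k$ then forbids $V$ being ``much longer than'' each of $w_i,w_i'$. Quantitatively: $V\setminus w_i$ and $V\setminus w_i'$ are the two end-overhangs; if either overhang had length $>N^2/(10^{10}n)$ then... — actually the clean way is: the overhang of $w_i'$ beyond $w_i$ on one side, together with the part of $w_i$ it does not overlap, would let us shorten; I would formalize ``both $w_i$ and $w_i'$ contain $V$ minus its initial and terminal length-$\tfrac{1}{10^{10}n}N^2$ segments'' as precisely the statement that neither end-overhang exceeds $\tfrac{1}{10^{10}n}N^2$, and derive a contradiction to minimality otherwise by the splice argument of (1). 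The subtle point — and the main obstacle in (2) — is handling the case where $w_i$ or $w_i'$ is \emph{short} (length $<N^2/(10^{10}n)$), where Lemma~\ref{lemU_i} gives no uniqueness of the ambient relator; there one simply uses that $V$ is then short, so $V'$ as defined is empty or nearly so and the claim is vacuous.

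Finally (3) is the ``long'' refinement of (2): if $|w_i|\ge \tfrac{1}{10^5n}N^2$, then since $\tfrac{1}{10^5n}>\tfrac{1}{10^{10}n}$, the overlap $w_i\cap w_i'$ has length $\ge |w_i|-2\cdot\tfrac{1}{10^{10}n}N^2\ge \tfrac{1}{10^{10}n}N^2$ by (2), so Lemma~\ref{lemU_i}(2) applies to this common subword and forces $w_i'$ to be a subword of a power of the \emph{same} $U_j^{\varepsilon}$ as $w_i$, in the \emph{same} cyclic position; and (2) already gives that $w_i$ and $w_i'$ differ by at most initial/terminal segments of length $\tfrac{1}{10^{10}n}N^2$. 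This is essentially immediate once (2) and Lemma~\ref{lemU_i} are in hand, so no new obstacle arises. I would write (3) as a two-line corollary of (1), (2), and Lemma~\ref{lemU_i}.
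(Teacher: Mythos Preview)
Your (1) and (3) are essentially the paper's argument. In (1) the ``symmetrically'' clause is wrong as written --- but you do not need it: once you have the prefix of $w$ of length $p_i$ expressed with $\le i-1$ $U$-words (as a prefix of $w_1'\cdots w_{i-1}'$), just append $w_{i+1}\cdots w_k$ itself ($k-i$ $U$-words) to get $\le k-1$ factors total. Your (3) is fine once (2) is in hand.

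Part (2) has a genuine gap. Your plan is to show that $V=w_i\cup w_i'$ is itself a $U$-word and then deduce from minimality of $k$ that the overhangs are small. Two problems. First, in the short-overlap case your claim that ``$|V|\le |w_i|+\tfrac{1}{10^{10}n}N^2$, still a short enough $U$-word'' is false on both counts: $|V|=|w_i|+|w_i'|-|w_i\cap w_i'|$ can be close to $|w_i|+|w_i'|$ when the overlap is short, and in any case being short does not make a word a $U$-word. Second, and more fundamentally, even when $V$ \emph{is} a $U$-word, substituting $V$ for $w_i$ (and switching to the primed decomposition on the far side) still yields exactly $k$ factors, e.g.\ $w=w_1\cdots w_{i-1}\cdot V\cdot w_{i+1}'\cdots w_k'$; so ``$V$ is a $U$-word'' produces no contradiction to minimality. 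Your vague ``splice argument of (1)'' does not fill this in.

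The paper's move is different: it merges \emph{across} a factor boundary of one decomposition using a factor of the other. If the left-overhang $\bar w_i$ of $w_i'$ past $w_i$ has length $\ge\tfrac{1}{10^{10}n}N^2$, then by part~(1) applied at index $i-1$ this $\bar w_i$ is entirely a suffix of $w_{i-1}$; writing $w_{i-1}=u\bar w_i$ and $w_i'=\bar w_i v$, Lemma~\ref{lemU_i} forces $z:=u\bar w_i v$ to be a single $U$-word, and then
\[
w=w_1\cdots w_{i-2}\cdot z\cdot w_{i+1}'\cdots w_k'
\]
has $(i-2)+1+(k-i)=k-1$ factors, contradicting $c_1(w)=k$. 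That cross-boundary merge (of $w_{i-1}$ with $w_i'$, not of $w_i$ with $w_i'$) is the key step your outline is missing.
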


\begin{proof} (1) Suppose that this does not hold, i.e. that for some $i\in\{1,\ldots ,k\}$ the words $w_i$ and $w_i'$ do not overlap. Without loss of generality we assume that $w_i$ comes before $w_i'$ when reading $w$. Therefore $w_i'w_{i+1}'\cdot\ldots w_k'$ is a subword of $w_{i+1}w_{i+2}\cdot\ldots\cdot w_k$,  and, in particular, $w_i' w_{i+1}'\cdot\ldots w_k'$ is a product of $k-i$ $U$-words. It then follows that $w$ is product of $(i-1)+(k-i)=k-1$ $U$-words, contradicting the definition of $c_1(w)$.

(2) Suppose that the conclusion of part (2) of the lemma does not hold. Without loss of generality we may assume that $w_1\cdot\ldots\cdot w_{i-1}$ has a suffix $\bar w_i$ of length at least $ \frac{1}{10^{10}n}N^2$ that is a prefix of $w_i'$. The subword $\bar w_i$ is in fact a suffix of $w_{i-1}$ as otherwise $w_{i-1}$ and $w'_{i-1}$ do not overlap, contradicting (1).

\smallskip Thus $w_{i-1}=u\bar w_i$ for some word $u$ and $w'_i=\bar w_iv$ for some word $v$. Now as $w_{i-1}$ and $w'_i$ are $U$-words and $\bar w_i$ is of length at least $ \frac{1}{10^{10}n}N^2$, it follows from Lemma~\ref{lemU_i}(2)  that $z=u\bar w_iv$ is a $U$-word. It follows that $$w=w_1\cdot\ldots w_{i-2}zw_{i+1}'\cdot\ldots w'_k,$$ contradicting the assumption that $k=c_1(w)$.

 (3) This part follows immediately from the arguments used to prove (1) and (2) together with Lemma ~\ref{lemU_i}(2).
\end{proof}

 Now let $w$ be a freely reduced word over $\{a_1,\dots, a_n\}^{\pm 1}$ with $c_1(w)=k$. For $1\le i\le k$ we will define a number $\ell_w(i)$ that essentially measures the length of the $i$-th factor in an admissible decomposition of $w$, provided this factor is almost of length $N^2$. The precise definition of $\ell_w(i)$ is independent of the chosen admissible decomposition of $w$ and is more robust under certain modifications of $w$ that we will introduce later on.

\medskip 

\begin{lem}\label{L:i-equiv} 
Let $w\in F(A)$ and $k=c_1(w)$. Fix  $i\in\{1,\ldots ,k\}$. Choose an admissible decomposition $w=w_1\cdot\ldots\cdot w_k$ of $w$. Suppose that for some $j\neq i$ the subword $w_j$  is a maximal $U$-subword of $w$  with $|w_j|\ge \frac{1}{10^3n}N^2$. Choose a  word $\bar w_j$ that is $U$-complementary to $w_j$  that is also of length at least $\frac{1}{10^3n}N^2$ and let $\bar w:=w_1\cdot\ldots w_{j-1}\bar w_jw_{j+1}\cdot\ldots \cdot w_k$.

Then the following hold:
\begin{enumerate}
\item $\bar w$ is freely reduced.
\item $\bar w_j$ is a maximal $U$-subword of $\bar w$.
\item $c_1(\bar w)=c_1(w)$.
\end{enumerate}
\end{lem}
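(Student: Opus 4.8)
The plan is to carry out the three assertions in order, since (1) and (2) feed into the proof of (3). First I would prove (1), that $\bar w$ is freely reduced. The only places where cancellation could occur are at the two new seams, i.e.\ between $w_{j-1}$ and $\bar w_j$ and between $\bar w_j$ and $w_{j+1}$. Since $w_j$ is a \emph{maximal} $U$-subword of $w$ of length $\ge \frac{1}{10^3 n} N^2$, and since $\bar w_j$ is $U$-complementary to $w_j$ of length $\ge \frac{1}{10^3 n} N^2$, both $w_j$ and $\bar w_j$ are long subwords of powers of a common cyclic conjugate $U_\ast$ of some $U_i^{\pm 1}$, with $W V^{-1}$ a power of $U_\ast$ when $W = w_j$, $V = \bar w_j$. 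The key point is that the boundary behaviour of $\bar w_j$ (its first and last letters) is constrained: reading past the endpoints of $w_j$ in $w$ one is forced, by Lemma~\ref{lemU_i}(2) and the maximality of $w_j$, to leave the $U_\ast$-pattern; hence the letter of $w_{j-1}$ immediately preceding $w_j$ is \emph{not} the inverse of the first letter of the $U_\ast$-pattern continued backwards, which is exactly the first letter of $\bar w_j$ read appropriately, and symmetrically at the other seam. One has to be slightly careful here about the non-uniqueness of $\bar w_j$ (one may prepend or append whole copies of $U_\ast^{\pm1}$), but after free reduction the cyclically reduced word $U_\ast$ is not a proper power, so the seams reduce to the generic case and no catastrophic cancellation propagates. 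I would phrase this as: after free reduction $\bar w$ still contains a subword that is almost all of $U_\ast$, and the total amount cancelled at each seam is $O(1)$ (in fact $0$ generically), well below the $\frac{1}{10^{10} n} N^2$ threshold.

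Next, (2): $\bar w_j$ is a maximal $U$-subword of $\bar w$. That $\bar w_j$ is a $U$-subword is immediate. Maximality means it cannot be extended on either side inside $\bar w$ to a longer $U$-subword. Suppose it could be extended, say on the right, absorbing an initial segment $p$ of $w_{j+1}$; then $\bar w_j p$ is a $U$-word of length $\ge \frac{1}{10^3 n}N^2$, hence by Lemma~\ref{lemU_i}(2) it lies in a power of the same $U_i^{\pm1}$ as $\bar w_j$, i.e.\ of the same $U_\ast$. Reading backwards through $\bar w_j$ from this extended pattern, and using that $W V^{-1}$ is a power of $U_\ast$, the pattern is forced to continue into $w_{j-1}$ as well; combined with the corresponding extension on the left this would show that in the original word $w$ the subword $w_j$ could have been extended to a longer $U$-subword of $w$, contradicting the maximality of $w_j$ in $w$. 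This is essentially a symmetry between $w$ and $\bar w$ under the surgery, and the bookkeeping is the same as in part (1).

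Finally (3), $c_1(\bar w) = c_1(w)$. One inequality, $c_1(\bar w) \le c_1(w) = k$, is free: $\bar w = w_1 \cdots w_{j-1}\,\bar w_j\, w_{j+1} \cdots w_k$ is by construction a product of $k$ $U$-words (each $w_\ell$ is a $U$-word and $\bar w_j$ is a $U$-word), so $c_1(\bar w)\le k$; strictly speaking I should note that a priori the product might not be the \emph{reduced} product, but (1) guarantees it is. For the reverse inequality $c_1(\bar w)\ge k$, I would apply the whole construction symmetrically: $w$ is obtained from $\bar w$ by exactly the same kind of surgery (replace the maximal $U$-subword $\bar w_j$ of $\bar w$, which is long by hypothesis and maximal by part (2), by the $U$-complementary word $w_j$, which is long). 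By parts (1)--(2) applied to $\bar w$ this surgery produces a freely reduced word, namely $w$, and by the already-established inequality $c_1(w)\le c_1(\bar w)$. Combining $c_1(\bar w)\le c_1(w)$ and $c_1(w)\le c_1(\bar w)$ gives equality.

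\emph{Main obstacle.} The real work is entirely in part (1) and, equivalently, in controlling the seams. The danger is that cancellation at one of the two new seams could cascade and eat into a definite fraction of $\bar w_j$ or of the neighbouring $w_\ell$, which would wreck both the ``still a $U$-word'' claim and the count in (3). Ruling this out cleanly requires invoking the maximality of $w_j$ in $w$ together with Lemma~\ref{lemU_i}(2) to pin down the local structure of $w$ near the endpoints of $w_j$, and then translating that to a statement about $\bar w$ near the endpoints of $\bar w_j$ via the relation $w_j \bar w_j^{-1} \in \langle U_\ast\rangle$. Keeping track of the ambiguity in the choice of $\bar w_j$ (the freedom to add whole periods $U_\ast^{\pm1}$) while staying within the stated length bounds is the fiddly part; the rest is formal.
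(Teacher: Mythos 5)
Your treatment of (1) and (2) matches the paper's approach and is sound: cyclic reducedness of $w_j\bar w_j^{-1}$ determines the first and last letters of $\bar w_j$ as the inverses of the letters that continue the $U_*$-pattern past the two ends of $w_j$, and maximality of $w_j$ in $w$ says precisely that the letters of $w$ adjacent to $w_j$ are \emph{not} those pattern-continuation letters; together these give no cancellation at either seam, hence (1), and the same local analysis gives (2). Your letter-level bookkeeping is slightly garbled (the first letter of $\bar w_j$ is the \emph{inverse} of the last letter of $U_*$, not that letter itself), but the logic is right, and the worry about the non-uniqueness of $\bar w_j$ is handled automatically since prepending or appending whole periods $U_*^{\pm 1}$ does not change the boundary letters, and the length lower bound rules out degenerate choices.

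Part (3) is where you understate the remaining work. The forward inequality $c_1(\bar w)\le k$ is indeed free once (1) is in hand. But ``applying the whole construction symmetrically'' requires verifying the lemma's hypotheses for $\bar w$, and the hypothesis is not merely that $\bar w_j$ be a maximal $U$-subword of $\bar w$ (which (2) gives you); it is that $\bar w_j$ occur as a $U$-factor in an \emph{admissible} decomposition of $\bar w$, that is, one of length $c_1(\bar w)$. At this stage of the argument $c_1(\bar w)$ could a priori be $<k$, in which case $w_1\cdots \bar w_j\cdots w_k$ is not admissible for $\bar w$ and you have not exhibited any admissible decomposition of $\bar w$ in which $\bar w_j$ appears as a factor; so the symmetry argument does not yet deliver $c_1(w)\le c_1(\bar w)$. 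Closing this gap needs a Lemma~\ref{uniquenessofwi}-type argument: in any admissible decomposition $\bar w=z_1\cdots z_{k'}$ some factor overlaps the long maximal $U$-subword $\bar w_j$ in more than $\frac{1}{10^{10}n}N^2$ letters, is therefore in the same $U_*$-pattern by Lemma~\ref{lemU_i}(2), is therefore contained in $\bar w_j$ by maximality, and can be enlarged to equal $\bar w_j$ at the expense of its neighbours. This is exactly the step the paper handles by an explicit citation (the printed reference ``Lemma~\ref{lemU_i}(3)'' there is evidently a slip, as that lemma has only two parts; the intended reference is Lemma~\ref{uniquenessofwi}(3)), and it is the step hidden behind your closing remark that ``the rest is formal.''
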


\begin{proof}(1) and (2) follow from the maximality of the $U$-subword $w_j$ and the fact that $w$ is  reduced and that $w_j\bar w_j^{-1}$ is cyclically reduced. (3) follows  from (1) and (2) as the assumption on the length of $w_j$ and $\bar w_j$ together with Lemma~\ref{uniquenessofwi}(3)  imply that we can find admissible decompositions of $w$ and $\bar w$ such that $w_j$, respectively $\bar w_j$, occur as factors.
\end{proof}

For every $k\ge 1$ denote by $F(A;k)$ the set of all $w\in F(A)$ with $c_1(w)=k$.

\begin{defn}[$i$-equivalence]
If $w\in F(A;k)$, $i\in \{1,\dots, k\}$  and $\bar w$ is obtained from $w$ as in Lemma~\ref{L:i-equiv}, we say that $\bar w$ is \emph{elementary $i$-equivalent} to $w$.  Lemma~\ref{L:i-equiv} implies that $\bar w\in F(A;k)$.

We further say that $w\in F(A;k)$ is \emph{$i$-equivalent} to a word $w'\in F(A)$ if $w$ can be transformed into $w'$ by applying finitely many elementary $i$-equivalences (note that in this definition the value of the index $i$ is fixed). Thus for every  $i\in\{1,\ldots ,k\}$ $i$-equivalence is equivalence relation on $F(A;k)$. 
\end{defn}

\begin{lem}\label{i-equivalenceisharmless} Suppose that $w, w'\in F(A;k)$ are  $i$-equvalent for some $1\le i \le c_1(w)=c_1(w')$ and that  $w=w_1\cdot\ldots \cdot w_k$ and $w'=w'_1\cdot\ldots \cdot w'_k$ are admissible decompositions of $w$, respectively $w'$. Then the following hold:
\begin{enumerate}
\item There exists a word $V\in F(A)$ such that $w_i=xVy$ and $w_i'=x'Vy'$ where $x,y,x',y'$ are words of length at most $\frac{1}{10^{10}}\cdot N^2$.
\item The lengths of $w_i$ and $w_i'$ differ at most by $\frac{2}{10^{10}}\cdot N^2$.
\end{enumerate}
\end{lem}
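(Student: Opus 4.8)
The plan is to induct on the length of a chain of elementary $i$-equivalences joining $w$ to $w'$, using Lemma~\ref{uniquenessofwi} as the device for passing between the \emph{arbitrary} admissible decompositions in the statement and the ones that arise naturally along such a chain. Write $k:=c_1(w)=c_1(w')$. The first step is to extract from Lemma~\ref{uniquenessofwi} a decomposition-independent ``core''. Fix $v\in F(A;k)$: by Lemma~\ref{uniquenessofwi}(1) the $i$-th factors of any two admissible decompositions of $v$ overlap non-trivially, and by Lemma~\ref{uniquenessofwi}(2) their left endpoints, regarded as positions in $v$, lie within $\frac{1}{10^{10}n}N^2$ of one another, and likewise their right endpoints. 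Hence the intersection $V_v(i)$ of the $i$-th factors taken over all admissible decompositions of $v$ is a single (possibly empty, if all these factors are short) subword of $v$, and every $i$-th factor $v_i$ of $v$ satisfies $v_i=pV_v(i)q$ with $|p|,|q|\le\frac{1}{10^{10}n}N^2$. Thus it suffices to produce a word $V$ with $V_w(i)=aVb$ and $V_{w'}(i)=a'Vb'$ for $|a|,|b|,|a'|,|b'|$ bounded by a fixed constant multiple of $\frac{1}{10^{10}n}N^2$; composing with the prefixes and suffixes above and using $n\ge2$ then yields $w_i=xVy$, $w_i'=x'Vy'$ with $|x|,|y|,|x'|,|y'|\le\frac{1}{10^{10}}N^2$, which is (1), and (2) follows at once since $\bigl|\,|w_i|-|w_i'|\,\bigr|\le\max\{|x|+|y|,\,|x'|+|y'|\}\le\frac{2}{10^{10}}N^2$.

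For a \emph{single} elementary $i$-equivalence $w\to\bar w$ the comparison of cores is immediate. By definition and by Lemma~\ref{L:i-equiv} there is an admissible decomposition $w=w_1\cdot\ldots\cdot w_k$ and an index $j\ne i$ such that $\bar w=w_1\cdot\ldots\cdot w_{j-1}\bar w_jw_{j+1}\cdot\ldots\cdot w_k$ is again admissible. Since $j\ne i$, the word $w_i$ is simultaneously an $i$-th factor of $w$ and of $\bar w$; therefore both $V_w(i)$ and $V_{\bar w}(i)$ are obtained from $w_i$ by deleting at most $\frac{1}{10^{10}n}N^2$ from each end, so $V_w(i)$ and $V_{\bar w}(i)$ coincide after deleting an initial and a terminal segment of length $\le\frac{1}{10^{10}n}N^2$ from each. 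In particular the lemma holds whenever $w$ and $w'$ are joined by a chain of uniformly bounded length.

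The main obstacle is the inductive step for long chains: naively composing the previous estimate over a chain of length $m$ produces an error $\sim m\cdot\frac{1}{10^{10}n}N^2$, which is not of the required form. The remedy is to exploit that every elementary move alters only a factor of index $\ne i$, so the ambient word changes only away from the $i$-th factor; using this together with Lemma~\ref{uniquenessofwi}(1)--(3), Lemma~\ref{L:i-equiv}, and the fact that the $U$-complementary words inserted by the moves have length $\ge\frac{1}{10^3n}N^2$, which dwarfs the ambiguity term $\frac{1}{10^{10}n}N^2$, one re-chooses the admissible decomposition used at each step of the chain so that a single fixed word $W$ is the $i$-th factor of \emph{every} word occurring along the chain. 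With the chain so threaded, the only comparisons that remain are between the given decomposition of $w$ and the threaded one, and between the given decomposition of $w'$ and the threaded one; each costs a single application of Lemma~\ref{uniquenessofwi}(2), and one takes $V:=W$. The delicate case in the threading is when the index being modified is adjacent to $i$ and the maximal $U$-subword being replaced abuts the $i$-th factor: here one takes $W$ to be the canonical core $V_v(i)$, whose occurrence in $v$ is disjoint from every factor of index $\ne i$, and absorbs the resulting short leftover segments into the neighbouring factors by means of Lemma~\ref{lemU_i}. The only other point requiring routine care is the degenerate situation in which the $i$-th factors are all short, where all of the above holds trivially with $V$ empty.
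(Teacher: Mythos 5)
Your proof identifies the same underlying mechanism as the paper's: a single elementary $i$-equivalence leaves the $i$-th factor unchanged in the inherited decomposition (Lemma~\ref{L:i-equiv}), and Lemma~\ref{uniquenessofwi}(2) controls the variation between arbitrary admissible decompositions of a fixed word. You also correctly identify that the naive composition of these estimates over a chain of length $m$ yields an error of order $m\cdot\frac{1}{10^{10}n}N^2$, which is not of the required form --- this is the crux of the lemma, and it is good that you isolate it.

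However, your resolution of the accumulation issue --- the ``threading'' step --- is where the argument has a genuine gap. You assert that one can ``re-choose the admissible decomposition used at each step of the chain so that a single fixed word $W$ is the $i$-th factor of \emph{every} word occurring along the chain,'' but this is not established; in fact it is essentially equivalent to the statement being proved. The $i$-th factors of admissible decompositions of $w^{(l)}$ and $w^{(l+1)}$ are controlled only up to $\frac{1}{10^{10}n}N^2$ at each end by Lemma~\ref{uniquenessofwi}(2), so \emph{a priori} the set of words that can appear as an $i$-th factor could drift from step to step, and the existence of a single $W$ valid for all $w^{(0)},\dots,w^{(m)}$ is exactly the non-accumulation claim one needs to prove, not a device one may freely invoke. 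The paper's own one-line justification (``another such elementary $i$-equivalence applied to the $j$-th factor reverses these changes'') is gesturing at a genuine stability phenomenon: the boundary between the $i$-th factor and its neighbours sits at a transition point between powers of two relators $U_s$, $U_{s'}$, and by the small-cancellation properties of Lemma~\ref{lemU_i} that transition point is rigid (up to $\frac{1}{10^{10}n}N^2$) independently of which $U$-complementary word occupies the neighbouring slot, because replacing $w_j$ by $\bar w_j$ keeps the same cyclic start- and end-positions in the relator. That rigidity --- not mere reversibility of moves, and not an unargued threading --- is what stops the drift, and it is the missing ingredient in your write-up. To close the gap you would need to make this positional rigidity explicit, e.g.\ by fixing a long central subword $Z$ of the $i$-th factor, observing that $Z$ survives unchanged in every $w^{(l)}$, and arguing via Lemma~\ref{lemU_i}(2) that every $i$-th factor of every $w^{(l)}$ is an extension of $Z$ by at most $\frac{1}{10^{10}n}N^2$ on each side.
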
 
\begin{proof} This statement follows from the fact that an elementary $i$-equivalence that modifies a $w_j$ only affects the adjacent factors and increases or decreases their length by at most $\frac{1}{10^{10}n}N^2$ by Lemma~\ref{uniquenessofwi},  and that another such elementary $i$-equivalence applied to the $j$-th factor reverses theses changes.
\end{proof}

\begin{defn}[Complexity of a word in $F(A)$]

Let $w\in F(A)$, $k=c_1(w)$.
For each $i\in \{1,\dots, k\}$ we define $\hat\ell_w(i)$ to be the maximum over all $|w_i|$ where $w_i$ is the $i$-th factor in some admissible decomposition of a word $w'$ that is $i$-equivalent to $w$.
Note that part~(2) of Lemma~\ref{i-equivalenceisharmless} implies that $\hat\ell_w(i)<\infty$.

 We further define $\ell_w(i)=0$ if $\hat\ell_w(i)<(1-\frac{1}{10^3n})N^2$ and $\ell_w(i)=\hat\ell_w(i)$ otherwise. Note that $\ell_w(i)=\ell_{\bar w}(i)$ if $w$ and $\bar w$ are $i$-equivalent. We now define $$c_2(w):=\ell_w(1)+\ldots+\ell_w(c_1(w)).$$

\medskip For $w\in F(A)$ we define the \emph{complexity} of $w$ as $c(w)=(c_1(w),c_2(w))$ and order complexities lexicographically. This order is a well-ordering on $F(A)$
\end{defn}

\medskip It is not hard to verify that an $\alpha$-small cancellation move relative to $U_j$ for $\alpha$ close to $1$ decreases the complexity. We will need the following generalization of this fact:

\begin{lem}\label{reductionmove} Let $w\in F(A)$ be a reduced word and let $W\in F(A)$ be a $U$-word of length $\frac{1}{100n}N^2$. Suppose that $w$ has (as a word) a reduced product decomposition of type
\[
w=x_0W^{\varepsilon_1}x_1W^{\varepsilon_2}\cdot\ldots W^{\varepsilon_q}x_q
\] with $\varepsilon_t\in\{-1,1\}$ for $1\le t\le q$. Choose $V$ such that $WV^{-1}$ is a cyclic conjugate of some $U_j^{\pm 1}$ and let $w'$ be the word obtained from the word $x_0V^{\varepsilon_1}x_1V^{\varepsilon_2}\cdot\ldots V^{\varepsilon_q}x_q$ by free reduction. Then the following hold:
 \begin{enumerate}
\item  $c(w')\le c(w)$.
\item If one of the subwords $W^{\varepsilon_i}$ is contained in a $U$-subword  of $w$ of length at least $(1-\frac{1}{10^4n})N^2$, then $c(w')< c(w)$. 
\end{enumerate}
\end{lem}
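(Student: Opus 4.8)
The plan is to track how the substitution $W^{\varepsilon_t}\rightsquigarrow V^{\varepsilon_t}$ affects admissible decompositions and the two complexity components $c_1$ and $c_2$. First I would fix an admissible decomposition $w=w_1\cdots w_k$ with $k=c_1(w)$, and relate it to the given product decomposition $w=x_0W^{\varepsilon_1}x_1\cdots W^{\varepsilon_q}x_q$. The key point is that each $W^{\varepsilon_t}$ is a $U$-word of length $\frac{1}{100n}N^2$, which is well above the threshold $\frac{1}{10^{10}n}N^2$ appearing in Lemma~\ref{lemU_i} and Lemma~\ref{uniquenessofwi}; hence by Lemma~\ref{uniquenessofwi}(3) each occurrence of $W^{\varepsilon_t}$ in $w$ is, up to initial/terminal segments of length $\frac{1}{10^{10}n}N^2$, contained in a single $U$-factor $w_{j(t)}$, and in fact each $W^{\varepsilon_t}$ lies inside a maximal $U$-subword of $w$. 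Passing from $w$ to $w'$ one replaces, inside these maximal $U$-subwords, the chunk $W^{\varepsilon_t}$ (an initial segment of some $U_*^{\pm}$) by $V^{\varepsilon_t}$, which is $U$-complementary; this is exactly the operation of Lemma~\ref{L:i-equiv} carried out simultaneously at several factors, so after free reduction $w'$ still admits a decomposition into at most $k$ $U$-words. This gives $c_1(w')\le c_1(w)$.

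For part (1), the plan is to split into two cases. If $c_1(w')<c_1(w)$ we are done immediately since complexities are ordered lexicographically. If $c_1(w')=c_1(w)=k$, I would exhibit a bijection between an admissible decomposition of $w$ and one of $w'$ under which factor $i$ of $w$ maps to a word that is $i$-equivalent (in the sense of Lemma~\ref{L:i-equiv} and the subsequent definition) to factor $i$ of $w'$ — the point being that replacing $W^{\varepsilon_t}$ by $V^{\varepsilon_t}$ inside a maximal $U$-subword is precisely an elementary $i$-equivalence when $|W|,|V|\ge\frac{1}{10^3n}N^2$, and otherwise the affected factor is so short ($\hat\ell < (1-\frac{1}{10^3n})N^2$ on both sides, using Lemma~\ref{i-equivalenceisharmless}(2) and that the perturbation has size $O(N^2/n)$ coming from the $x_t$'s) that $\ell_w(i)=\ell_{w'}(i)=0$. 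In either situation $\ell_{w'}(i)\le\ell_w(i)$ for every $i$, hence $c_2(w')\le c_2(w)$, giving $c(w')\le c(w)$.

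For part (2), suppose some $W^{\varepsilon_{t_0}}$ lies in a $U$-subword of $w$ of length $\ge(1-\frac{1}{10^4n})N^2$; call $i_0$ the index of the $U$-factor of $w$ containing it (it is the unique factor with $\ell_w(i_0)=\hat\ell_w(i_0)\ge(1-\frac{1}{10^3n})N^2$, by Lemma~\ref{uniquenessofwi}(3) and the length gap). The substitution replaces an initial segment $W^{\varepsilon_{t_0}}$ of length $\frac{1}{100n}N^2$ of a near-full copy of $U_*^{\pm}$ by its $U$-complement $V^{\varepsilon_{t_0}}$, whose length is at most $|U_j| - |W| + O(1)$; combined with the bound $|U_i|\ge N^2(1-\frac{1}{10^{10}n})$ from Lemma~\ref{lemU_i}(1) and the fact that $W$ was an \emph{initial} segment (so $WV^{-1}$ is one full cyclic conjugate of $U_j^{\pm1}$, not a higher power — one can always choose $V$ this way), the new $i_0$-th factor, together with its $i_0$-equivalence class, has $\hat\ell_{w'}(i_0)\le N^2 - \frac{1}{100n}N^2 + O(N^2/n) < (1-\frac{1}{10^3n})N^2$, once the constants are checked. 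Hence $\ell_{w'}(i_0)=0 < \ell_w(i_0)$, while all other factors satisfy $\ell_{w'}(i)\le\ell_w(i)$ as in part (1); therefore $c_2(w')<c_2(w)$ and $c(w')<c(w)$.

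The main obstacle I anticipate is the bookkeeping in the case $c_1(w')=c_1(w)$: one must argue that the simultaneous substitutions at the various $W^{\varepsilon_t}$ do not cause factors to merge or split in a way that breaks the factor-by-factor comparison, and that the interaction between adjacent substitutions (when two $W^{\varepsilon_t}$'s land in the same maximal $U$-subword, or in adjacent ones separated by a short $x_t$) is still governed by Lemma~\ref{uniquenessofwi} and Lemma~\ref{i-equivalenceisharmless}. Making the constant arithmetic in part (2) airtight — verifying that $\frac{1}{100n}-O(1/n)$ exceeds $\frac{1}{10^3n}$ with the specific constants in the definitions — is routine but needs to be done carefully.
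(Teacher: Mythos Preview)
Your overall strategy mirrors the paper's: pick an admissible decomposition with each $W^{\varepsilon_t}$ inside a single $U$-factor, observe that replacing $W^{\varepsilon_t}$ by $V^{\varepsilon_t}$ inside a factor $w_j$ produces another $U$-word $w_j'$ (so $c_1(w')\le c_1(w)$), and then compare $\ell_w(i)$ to $\ell_{w'}(i)$ factor by factor. The treatment of the \emph{affected} factors is essentially right, and your part~(2) conclusion is correct (even if your length bound $|w_{i_0}'|\approx |V|$ is far from sharp---in fact $|w_{i_0}'|\approx |U_j|-|w_{i_0}|$, which is tiny).

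The gap is in your handling of the \emph{unaffected} factors $w_i$ (those containing no $W^{\varepsilon_t}$). You want to deduce $\ell_{w'}(i)\le \ell_w(i)$ by arguing that each replacement at a neighboring factor $w_j$ is an elementary $i$-equivalence, citing the condition ``$|W|,|V|\ge\frac{1}{10^3n}N^2$''. But that is the wrong condition. Lemma~\ref{L:i-equiv} requires that both the \emph{old} factor $w_j$ and the \emph{new} factor $w_j'$ have length $\ge\frac{1}{10^3n}N^2$ (and that $w_j$ be a maximal $U$-subword). The first holds automatically since $|w_j|\ge |W|=\frac{1}{100n}N^2$, but the second can fail dramatically: when $|w_j|$ is close to $|U_j|$, the reduced word $w_j'$ has length $\approx |U_j|-|w_j|$, which may be far below the $\frac{1}{10^3n}N^2$ threshold. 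In that situation the passage $w\rightsquigarrow w'$ is \emph{not} an $i$-equivalence, and your ``otherwise'' clause (which only speaks about $\ell$ vanishing at the affected index, not at $i$) does not cover this case.

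The paper resolves this with an extra idea you are missing. For each such ``too-short'' $w_j'$, it replaces $w_j$ not by $w_j'$ but by a longer $U$-complementary word (namely $w_j'$ multiplied by a full cyclic conjugate of $U_j^{\pm1}$), producing a word $\dot w$ that \emph{is} $i$-equivalent to $w$ and whose $i$-th factor admits the same admissible enlargements as that of $w'$. One then transports to $\dot w$ whatever chain of $i$-equivalences realizes $\hat\ell_{w'}(i)$, concluding $\hat\ell_w(i)\ge \hat\ell_{w'}(i)$. Your ``main obstacle'' paragraph senses a difficulty here but does not isolate it; without this padding construction, the factor-by-factor comparison for unaffected indices does not go through.
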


\begin{proof} We will first observe that we may choose an admissible decomposition $w=w_1\cdot \ldots\cdot w_k$  (with $k=c_1(w)$)  such that any subword $W^{\varepsilon_t}$ of $w$ occurs inside some factor $w_j$ of $w$. Indeed in any admissible  decomposition of $w$ the subword $W^{\varepsilon_t}$ can overlap with at most $3$ $U$-factors of $w$, as otherwise an admissible decomposition of $w$ with fewer $U$-factors can be found. Thus $W^{\varepsilon_t}$ overlaps with some factor in a subword of length at least $\frac{1}{300n}N^2$. Because of  Lemma~\ref{uniquenessofwi},  we can expand this factor to contain $W^{\varepsilon_t}$. The claim follows by making this modifications for all $W^{\varepsilon_t}$.

Let now $w=w_1\cdot\ldots\cdot w_k$ be an admissible decomposition  such that any subword $W^{\varepsilon_i}$ occurs inside some factor $w_j$ of $w$.  
Note that possibly several $W^{\varepsilon_t}$ occur inside the same $w_j$. 
Note also that if some $W^{\varepsilon_t}$ is contained in a $U$-subword $x$ of length at least $(1-\frac{1}{10^4n})N^2$ then it is a subword of some $w_j$ of length at least $(1-\frac{2}{10^4n})N^2$. This conclusion can be established by first observing that by the argument before another admissible decomposition $w=z_1\cdot\ldots \cdot z_k$ (with $U$-factors $z_i$) can be chosen such that $x$ is contained in some $z_j$, and then applying Lemma~\ref{uniquenessofwi}. 

\smallskip  Replacing the $W^{\varepsilon_t}$ by the $V^{\varepsilon_t}$ inside each $w_j$ and performing free reduction yields a new word $w_j'$, which is again a $U$-word. The word $w'$ is then obtained from $w_1'\cdot\ldots\cdot w_k'$ by free reduction. This construction implies, in particular, that $c_1(w')\le c_1(w)$.

\smallskip Thus we may assume that $c_1(w)=c_1(w')=k$ as there is nothing to show otherwise. To prove the lemma it suffices to show that  $\ell_w(i)\ge \ell_{w'}(i)$ for all $1\le i\le k$ and that $\ell_w(i)> \ell_{w'}(i)$ for some $1\le i\le k$ if condition (2) is satisfied. 

If condition (2) of the lemma is satisfied, there exists some factor $w_i$ of length at least $(1-\frac{2}{10^4n})N^2$ that contains one of the  $W^{\varepsilon_t}$.
Therefore it is sufficient to consider the following two cases:

\smallskip\noindent {\bf Case 1: } Suppose that $w_i$ contains at least one of the words $W^{\varepsilon_t}$. Note that $\hat\ell_w(i)=\ell_w(i)\ge N^2$ if $w_i$ contains more than one such word. 

Note first that $\ell_{w'}(i)=0$ if $\ell_w(i)=0$, since the word obtained from $w_i$ be replacing $W^{\pm 1}$ with $V^{\pm  1}$ followed by free cancellation is of length at most $(1-\frac{1}{100n})N^2$ and since $i$-equivalence cannot increase this length by more than $\frac{2}{10^{10}n}N^2$ by Lemma~\ref{i-equivalenceisharmless}.

 If $\ell_w(i)\ne 0$ then the replacements of  $W^{\pm 1}$ with $V^{\pm  1}$ decrease the length of $w_i$ by more than $\frac{2}{10^{10}n}N^2$. Thus it follows from Lemma~\ref{i-equivalenceisharmless}  that $\ell_{w'}(i)<\ell_w(i)$. This holds in particular if $w_i$ is  of length at least $(1-\frac{2}{10^4n})N^2$,  which (by the above discussion) is guaranteed to occur if  condition (2) is satisfied.

\smallskip\noindent {\bf Case 2: } Suppose that $w_i$ contains no subword $W^{\varepsilon_t}$. To conclude the proof, we need to show that  $\ell_w(i)\ge \ell_{w'}(i)$. Note that for any admissible decomposition $z_1\cdot\ldots \cdot z_k$ the words $w_i$ and $z_i$ have a non-trivial common subword that is not touched by the replacement and free cancellations when going from $w$ to $w'$, since otherwise $c_1(w')<k$; this follows as in the proof of Lemma~\ref{uniquenessofwi}(1).

\smallskip If $w$ is $i$-equivalent to $w'$ then there is nothing to show as $\ell_w(i)= \ell_{w'}(i)$ by the definition of $\ell(i)$. However it may happen that $w$ and $w'$ are not $i$-equivalent as some of the $U$-words $w_j$ may be replaced by $U$-words that are too short for the replacements to qualify as $i$-equivalences. This is the reason why the argument for treating Case~2 is slightly more subtle.

\smallskip Choose a word $w''$ that is $i$-equivalent to $w'$ such that $w''$ realizes $\hat\ell_{w'}(i)$, i.e. that $w''$ has an admissible decomposition $w''_1w''_2\cdot\ldots \cdot w''_k$ such that $|w''_i|=\hat\ell_{w'}(i)$. It  suffices to establish:

{\bf Claim:} The word $w$ is $i$-equivalent to a word $\tilde w$ that has an admissible  decomposition $\tilde w_1\tilde w_2\cdot\ldots \cdot \tilde w_k$  such that $|\tilde w_i|=|w''_i|$. 

Indeed,  the existence of such a word $\tilde w$ implies that $$\hat\ell_{w}(i)\ge |\tilde w_i|=|w''_i|=\hat\ell_{w'}(i)$$ which implies that $\ell_w(i)\ge \ell_{w'}(i)$.

\smallskip To verify the Claim, we first construct a word $\dot w$ that is $i$-equivalent to $w$ in the following way: Start with $w$ and perform the same replacements as when going from $w$ to $w'$, but whenever one of the replacements results in a maximal $U$-subword $v$ of length less than $\frac{1}{3}N^2$, multiply it with a cyclic conjugate of the appropriate $U_j^{\pm 1}$ to obtain a $U$-word longer than $\frac{1}{3}N^2$ that has $v$ as both initial and terminal word and such that the replacement is an elementary $i$-equivalence.

 Call the resulting word $\dot w$. Note that $\dot w$ is $i$-equivalent to $w$ and all words that occur as the $i$-th factors in any admissible decomposition of $w'$ also occur as $i$-th factors in an admissible decomposition of $\dot w$. Now all elementary $i$-equivalences that need to be applied to $w'$ to obtain $w''$ can be applied to $\dot w$. We obtain a word that is $i$-equivalent to $\dot w$ and that has $w_i''$ as the $i$-th $U$-word. This proves the Claim and completes the proof of the lemma.\end{proof}

\section{Proof of the Main Theorem}\label{proofmaintheorem}

In this section we establish Theorem~\ref{thm:A} from the Introduction:

\begin{thm}\label{thm:main}
 Let  $n\ge 2$ be arbitrary.
Then for the group $G$ given by a generic presentation (*),  the group $G$ is torsion-free, word-hyperbolic, one-ended of $rank(G)=n$, and 
the $(2n-1)$-tuples $(a_1,\ldots ,a_n,1,\ldots 1)$ and $(b_1,\ldots ,b_n,1,\ldots 1)$ are not Nielsen-equivalent in $G$.
\end{thm}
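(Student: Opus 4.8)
The strategy is to argue by contradiction. Suppose the $(2n-1)$-tuples $(a_1,\dots,a_n,1,\dots,1)$ and $(b_1,\dots,b_n,1,\dots,1)$ are Nielsen-equivalent in $G$. Passing through the free group $F(A)=F(a_1,\dots,a_n)$, this means that the tuple $(a_1,\dots,a_n,1,\dots,1)$ is Nielsen-equivalent \emph{in $F(A)$} to some $(2n-1)$-tuple of freely reduced $F(A)$-words $(x_1,\dots,x_{2n-1})$ with $x_i=_G b_i$ for $i=1,\dots,n$ (the last $n-1$ entries being arbitrary $F(A)$-words, since $1=_G x_j$ is automatic once the first $n$ generate). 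Among all such tuples realizing the hypothesized Nielsen equivalence, I would fix one for which the total complexity $\sum_{i=1}^{n} c(x_i)$ (using the well-ordering from Section~\ref{S:generic}, ordered lexicographically on the sorted multiset of the individual $c(x_i)$) is minimal. The goal is to produce from it a strictly smaller such tuple, a contradiction.

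\textbf{Setting up the combinatorial picture.} Form the graph $\Gamma$ = wedge of $2n-1$ loop-edges (subdivided) labeled by $x_1,\dots,x_{2n-1}$, with the labeling morphism $p:\Gamma\to R_n$. The fact that $(a_1,\dots,a_n,1,\dots,1)\sim_{NE}(x_1,\dots,x_{2n-1})$ in $F(A)$ translates (via Stallings' theory) into: $\Gamma$ folds onto $R_n$, i.e.\ there is a finite folding sequence $\Gamma=\Gamma_0\to\Gamma_1\to\dots\to\Gamma_m=R_n$. Since $b(\Gamma)=2n-1$, every intermediate $\Gamma_t$ has $b(\Gamma_t)\le 2n-1$. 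Take $\Delta:=\Gamma_{t_0}$ to be the last term that does \emph{not} contain a copy of $R_n$ as a subgraph; then $\Gamma$ folds onto $\Delta$, $b(\Delta)\le 2n-1$, and one further fold on a small subgraph $\Psi\subseteq\Delta$ (with $\le n+2$ edges) produces a graph with $R_n$ inside. The labeling morphism $p_\Delta:\Delta\to R_n$ is path-surjective (reduced paths in $R_n$ lift to $\Gamma$ and hence to $\Delta$), $\Delta$ is a connected core graph (or can be taken so after a harmless modification), $R_n$ does not lift to $\Delta$, and $b(\Delta)\le 2n-1$; by Theorem~\ref{notallpathslift}, either $\Delta$ contains a degree-$1$-or-$2$ cover of $R_n$ as a subgraph, or — if we arrange the hypotheses of Theorem~\ref{readingranndomwordsingraphs} — every lift of a generic path is $\alpha$-injective. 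The plan is to use genericity of $\tau$ (so that presentation (**) is $C'(\lambda)$ for $\lambda$ tiny, and Lemmas~\ref{wordsrepresentingb_i}, \ref{lemU_i}, \ref{wordsoccuratmostonce} apply) together with $x_i=_G b_i$: by Lemma~\ref{wordsrepresentingb_i}, if $x_i\ne v_i$ then $x_i$ contains a subword $Z$ which is almost all of a cyclic conjugate $U_*$ of some $U_j^{\pm1}$, so $|Z|\ge \alpha|U_j|\ge \alpha N^2(1-\tfrac{1}{10^{10}n})$. (If instead every $x_i=v_i$, then we are essentially done: $(v_1,\dots,v_n)$ together with $n-1$ extra $F(A)$-words is $\sim_{NE}$ to $(a_1,\dots,a_n,1,\dots,1)$ — but one shows this forces $R_n$ to lift into a small graph, contradicting the genericity-driven $\alpha$-injectivity; this is where the earlier paper's $n+1$ case gets upgraded, and I'd reuse that mechanism.)

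\textbf{The surgery and the contradiction.} The occurrence of $Z$ inside the $i$-th petal of $\Gamma$ maps to a path $s_Z$ with label $Z$ in $\Delta$. Since $Z$ is long and $\Psi$ has only $\le n+2$ edges, $Z$ cannot be read mostly inside $\Psi$; here one invokes genericity of $\tau$ via (a version of) Theorem~\ref{readingranndomwordsingraphs} applied to $\Delta\setminus\Psi$ or directly via Lemmas~\ref{manysubpaths} and \ref{nonperiodic}, to locate a ``long'' arc $Q'$ in $\Delta\setminus\Psi$ (of length a definite fraction of $N^2$) along which $s_Z$ runs exactly once, i.e.\ $Q'$ is $s_Z$-injective. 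Let $W$ be the label of $Q'$; it is a $U$-word (a subword of $U_*$), and let $V$ be $U$-complementary to $W$ relative to $U_j$. Perform surgery: replace $Q'$ in $\Delta$ by an arc labeled $V$ to get $\Delta'$; since $Q'\cap\Psi=\emptyset$, $\Delta'$ still contains $\Psi$, hence $\Delta'$ still folds onto $R_n$. Genericity (Lemma~\ref{wordsoccuratmostonce}: long subwords of the $v_i^{\pm1}$, hence of the $U_i^{\pm1}$ via Lemma~\ref{lemU_i}, occur essentially uniquely) forces the full preimage of $Q'$ in $\Gamma$ to be a disjoint union of arcs labeled $W$ lying inside the petals — each petal label $x_t$ is a reduced product $x_t=x_{t,0}W^{\pm1}x_{t,1}W^{\pm1}\cdots W^{\pm1}x_{t,\cdot}$. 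Replace each $W^{\pm1}$ by $V^{\pm1}$, freely reduce, to get petal labels $x_t'$ and a graph $\Gamma'$ = wedge of $2n$ (after accounting correctly — one gets $2n-1$) circles which still folds onto $\Delta'$ and hence onto $R_n$. Therefore $(x_1',\dots,x_{2n-1}')\sim_{NE}(a_1,\dots,a_n,1,\dots,1)$ in $F(A)$, and still $x_i'=_G b_i$ (replacing $W$ by $V$ is a relator substitution, so the $G$-value is unchanged). Finally, Lemma~\ref{reductionmove} gives $c(x_i')\le c(x_i)$ for all $i$, and because $Z$ is almost a whole $U_*$ — in particular some $W^{\pm1}$ sits inside a $U$-subword of $x_i$ of length $\ge(1-\tfrac{1}{10^4n})N^2$ — part (2) of Lemma~\ref{reductionmove} gives $c(x_i')<c(x_i)$ for that $i$. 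Hence the new tuple has strictly smaller total complexity while realizing the same Nielsen equivalence, contradicting minimality.

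\textbf{Main obstacle.} The routine pieces are the Tietze passage $(*)\to(**)$, the folding/$b(\Gamma)$ bookkeeping, and the complexity drop via Lemma~\ref{reductionmove}. The genuinely delicate step — the one consuming most of the work — is locating the long $s_Z$-injective arc $Q'$ \emph{disjoint from $\Psi$} and verifying that its full $\Gamma$-preimage is a disjoint union of $W$-labeled sub-arcs of the petals with no other interaction. This requires carefully combining: the small size of $\Psi$ (so most of $Z$ escapes $\Psi$); the $\alpha$-injectivity of generic lifts (Theorem~\ref{readingranndomwordsingraphs}, which is exactly why $\Delta$ cannot contain a $2$-cover — that case is excluded by genericity of $\tau$ forcing the $U_i$ to be non-periodic and pairwise nearly-disjoint); and the near-unique-occurrence property (Lemma~\ref{wordsoccuratmostonce}, Lemma~\ref{lemU_i}) to rule out $Q'$ being traversed multiply or its preimage straddling petal-junctions. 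Getting all the genericity constants ($\lambda$, $\alpha$, the lengths in Lemma~\ref{lemU_i}) to line up simultaneously is the technical heart of the argument.
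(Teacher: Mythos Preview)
Your proposal follows essentially the same architecture as the paper's proof: contradiction via a minimal-complexity tuple $(x_1,\dots,x_{2n-1})$, the folding sequence $\Gamma\to\Delta$ with the ``almost-$R_n$'' subgraph $\Psi\subseteq\Delta$, locating the long $U$-subword $Z$ in $x_{k+1}$ via Lemma~\ref{wordsrepresentingb_i}, the surgery replacing an $s_Z$-injective arc $Q'\subseteq\Delta\setminus\Psi$ labeled $W$ by its $U$-complement $V$, and the complexity drop via Lemma~\ref{reductionmove}. You also correctly identify the delicate step as isolating $Q'$ and controlling its $\Gamma$-preimage.

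Two places where your justifications diverge from the paper's and should be corrected. First, the exclusion of a $2$-sheeted cover inside $\Delta$ is \emph{not} a genericity fact about the $U_i$: it is the elementary observation that a $2$-sheeted cover of $R_n$ has Betti number $2n-1$, hence would equal $\mathrm{core}(\Delta)$, contradicting that $\Delta$ folds onto $R_n$. Second, the case ``all $x_i=v_i$'' (i.e.\ $k=n$) is handled in the paper by a direct Betti-number count (Lemma~\ref{lem:k<n}): the $\alpha$-injective images in $\Delta$ of the petals labeled $v_1,\dots,v_k$ supply $k$ disjoint long arcs outside $\Psi$, so $b(\Delta)\ge n+k$, forcing $k\le n-1$. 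Finally, the reason the preimage of $Q'$ in $\Gamma$ consists of disjoint arcs inside petals is that $p_\Delta\notin\mathrm{int}(Q')$ (Lemma~\ref{wecanshorten}), not Lemma~\ref{wordsoccuratmostonce} alone; this is exactly what prevents the preimage from straddling the wedge point.
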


Let $G$ be given by a generic presentation (*).   By Theorem~\ref{thm:free} we know that $G$ is torsion-free, word-hyperbolic, one-ended and that $rank(G)=n$.

For a $(2n-1)$-tuple $\tau=(w_1,\dots, w_{2n-1})$ of freely reduced words in $F(A)$ define the \emph{complexity} $c(\tau)$ of $\tau$
as \[c(\tau):=(c(w_1),\ldots ,c(w_n)).\]
 We order complexities lexicographically. Note that in this definition we completely disregard the words $w_{n+1}, \dots, w_{2n-1}$.

We argue by contradiction. Suppose that the conclusion of Theorem~\ref{thm:main} fails for $G$, so that the  $(2n-1)$-tuples $(a_1,\ldots ,a_n,1,\ldots 1)$ and $(b_1,\ldots ,b_n,1,\ldots 1)$ are Nielsen-equivalent in $G$.

 Then there exist elements $x_1, \ldots ,x_{2n-1}$ in $F(A)$ such that
 the $(2n-1)$-tuple $(x_1, \ldots ,x_{2n-1})$ is Nielsen equivalent in
 $F(A)$  to the $(2n-1)$-tuple $(a_1,\ldots ,a_n,1,\ldots ,1)$ and
 such that for some permutation $\sigma\in S_n$ we have $x_i=_G b_{\sigma(i)}$ for $1\le i\le n$.

 {\bf Minimality assumption:}
 Among all $(2n-1)$-tuples $(x_1, \ldots ,x_{2n-1})$ of freely reduced words in $F(A)$ with the above property choose a tuple 
 $(x_1, \ldots ,x_{2n-1})$ of minimal complexity.

This minimality assumption implies that $c(x_i)\le c(x_{i+1})$ for $i=1,\dots, n-1$. After a permutation of the $b_i$ we may moreover assume that $\sigma=id$.
We fix this minimal tuple $(x_1, \ldots ,x_{2n-1})$ for the remainder of the proof. The minimality assumption plays a crucial role in the argument and ultimately it will allow us to derive a contradiction.

\medskip We choose $k$ maximal such that $x_i=v_i$ in $F(A)$ for $1\le i\le k$. By convention if
$x_1\ne v_1$, we set $k=0$.

Note that if $k<n$ (which we will show below) then it follows from Lemma~\ref{wordsrepresentingb_i} that $x_{k+1}$ admits an $ \alpha$-small cancellation move relative to some $U_j$ for $\alpha$ close to~$1$. This implies in particular that we may assume that $x_{k+1}$ is of length at least $\frac{1}{99n}N^2$.

\medskip Let now $\Gamma$ be the wedge of $2n-1$ circles, wedged at a
vertex $p_\Gamma$, such that the $i$-the circle is labeled by the word $x_i$ for $1\le i\le 2n-1$. By assumption the $x_i$ form a generating set of $F(A)$, thus the natural morphism $f$ from $\Gamma$ to the rose $R_n$ is $\pi_1$-surjective as the loops of $\Gamma$ map to a generating set of $\pi_1(R_n)$. It follows that $\Gamma$ folds onto $R_n$ by a finite sequence of Stallings folds. Thus there exists a sequence of graphs $$\Gamma=\Gamma_0,\Gamma_1,\ldots ,\Gamma_l=R_n$$ and Stallings folds $f_i:\Gamma_ {i-1}\to \Gamma_i$ for $1\le i\le l$ such that $f=f_l\circ\ldots\circ f_1$. We may choose this folding sequence such that we do not apply a fold that produces a lift of $R_n$ in $\Gamma_i$ if another fold is possible. Choosing $\Delta$ to be the last graph in such a folding sequence that does not contain a lift of $R_n$ it follows that  there exists a graph $\Delta$ with the following properties:
\begin{enumerate}
\item There exists a map $f_{\Delta}:\Gamma\to\Delta$ such that $f:\Gamma\to R_n$  factors through $f_{\Delta}$.
\item $\Delta$ does not contain a lift of $R_n$.
\item Any fold applicable to $\Delta$ produces a lift of $R_n$, thus $\Delta$ contains a connected subgraph $\Psi$ with at most $n+2$ edges that folds onto $R_n$.
\end{enumerate}

Put $p_\Delta:=f_{\Delta}(p_\Gamma)$. Thus the petal-circles of $\Gamma$ get mapped to closed paths based at $p_\Delta$.

As the Betti number of $\Delta$ is bounded by $2n-1$ and as $\Delta$ folds onto $R_n$ it follows that $\Delta$ does not contain a 2-sheeted cover of $R_n$. Indeed such a 2-sheeted cover would be of Betti number $2n-1$ and therefore be the core of $\Delta$ which implies that $\Delta$ does not fold onto $R_n$. It follows that the hypothesis of Theorem~\ref{readingranndomwordsingraphs} is satisfied for $\Delta$. Thus for $\alpha$ arbitrarily close to $1$ we may assume that any lift of of the path $\gamma_{v_i}$ in $R_n$ with label $v_i=v_i(\underline a)$ to $\Delta$ is $\alpha$-injective.

\begin{lem} \label{lem:k<n}
We have $k<n$.
\end{lem}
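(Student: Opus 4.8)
The plan is to argue by contradiction, so suppose $k=n$; then $x_i=v_i$ in $F(A)$ for all $1\le i\le n$. For each such $i$ the $i$-th petal-circle of $\Gamma$ carries the label $v_i$, so its image under $f_\Delta$ is a reduced closed edge-path $\delta_i$ in $\Delta$ based at $p_\Delta$, and under the natural map $\Delta\to R_n$ through which $f$ factors, $\delta_i$ is a lift of $\gamma_{v_i}$. By the $\alpha$-injectivity statement recorded just above --- which applies because $\Delta$ is a connected core graph with $b(\Delta)\le b(\Gamma)=2n-1$ that contains neither a lift of $R_n$ nor a $2$-sheeted cover of $R_n$, so that Theorem~\ref{readingranndomwordsingraphs} is available --- each $\delta_i$ is $\alpha$-injective for $\alpha$ as close to $1$ as we wish. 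Fix such an $\alpha$; then, of the $N$ edge-traversals making up $\delta_i$, at least $(2\alpha-1)N$ are of topological edges that $\delta_i$ crosses exactly once.

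The first real step is to produce, for each $i\in\{1,\dots,n\}$, a ``private'' edge $e_i$: one crossed exactly once by $\delta_i$, crossed by no $\delta_j$ with $j\ne i$, and not lying in the subgraph $\Psi$. Since $\Delta$ is a finite core graph with $b(\Delta)\le 2n-1$, it is the union of fewer than $6n$ arcs, which --- after subdividing at $p_\Delta$ --- we may take adapted simultaneously to all the $\delta_i$; as the interior vertices of an arc have degree $2$ and the $\delta_i$ are reduced, any $\delta_j$ that crosses one edge of such an arc crosses the whole arc. Hence the label of an arc crossed by two distinct loops $\delta_i,\delta_j$ (with $i\ne j\le n$) is, up to inversion, a common subword of $v_i$ and $v_j$, and by genericity (Lemma~\ref{wordsoccuratmostonce}, with a parameter $\alpha'>0$ that may be taken arbitrarily small) has length less than $\alpha'N$; summing over the fewer than $6n$ arcs, the edges of $\Delta$ crossed by two distinct $\delta_i$ have total length less than $6n\alpha'N$. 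Taking $\alpha'$ small relative to $\alpha$ and $n$ and discarding also the at most $n+2$ edges of $\Psi$, the $(2\alpha-1)N$ once-crossed edges of $\delta_i$ cannot all be used up, so a private edge $e_i$ exists; and since $e_i$ lies in an arc crossed by $\delta_i$ alone, no $\delta_j$ with $j\ne i$ meets $e_i$.

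The contradiction then comes from a Betti-number count. Since $\Psi$ folds onto $R_n$, the labelling map $\pi_1(\Psi)\to\pi_1(R_n)=F_n$ is onto, so $b(\Psi)\ge n$. Collapsing $\Psi$, each $e_i$ survives as an edge $\bar e_i$ of $\Delta/\Psi$, and in $H_1(\Delta/\Psi;\mathbb{Q})$ the coefficient of the class of $\bar\delta_i$ on $\bar e_j$ is $\pm1$ for $i=j$ and $0$ for $i\ne j$; thus $[\bar\delta_1],\dots,[\bar\delta_n]$ are linearly independent and $b(\Delta/\Psi)\ge n$. By Lemma~\ref{lem:quotient}, $b(\Delta)=b(\Psi)+b(\Delta/\Psi)\ge 2n$, contradicting $b(\Delta)\le 2n-1$; hence $k<n$. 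The step I expect to require the most care is the extraction of the private edges $e_i$: this is where the detailed genericity of the $v_i$ --- shortness of their common subwords --- must be played off against the bound on the number of arcs of $\Delta$ and the $\alpha$-injectivity of the $\delta_i$, so that after throwing away the few short shared arcs and the small graph $\Psi$ there still remains, for each $i$, an edge crossed exactly once by $\delta_i$ and by no other $\delta_j$.
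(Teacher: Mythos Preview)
Your proof is correct and follows essentially the same strategy as the paper's: use $\alpha$-injectivity of the images $\delta_i$ of the first loops in $\Delta$ together with the genericity of the $v_i$ (Lemma~\ref{wordsoccuratmostonce}) to find, for each $i$, edges traversed by $\delta_i$ alone and lying outside $\Psi$, and then derive a Betti-number contradiction. The only notable difference is in packaging the final count: the paper extracts for each $i$ a long arc $\sigma_i$ (of length $\ge \tfrac{1}{100n}N$), attaches these to a connected subgraph $\bar\Psi\supseteq\Psi$, and reads off $b(\bar\Psi\cup\bigcup\sigma_i)\ge n+k$ directly, whereas you keep only single private edges $e_i$ and instead argue via linear independence of the cycles $[\bar\delta_i]$ in $H_1(\Delta/\Psi;\mathbb Q)$ combined with Lemma~\ref{lem:quotient}. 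Your homological version is arguably cleaner and avoids the slightly informal construction of $\bar\Psi$; the paper's version has the minor advantage of working directly for arbitrary $k$ rather than arguing by contradiction from $k=n$ (though of course these are equivalent here since $k\le n$ a priori). One small correction: you write $b(\Gamma)=2n-1$, but some of the $x_i$ with $i>n$ may be trivial, so strictly one has only $b(\Delta)\le b(\Gamma)\le 2n-1$; this does not affect the argument.
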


\begin{proof} For $i\in\{1,\ldots ,k\}$ the $i$-th loop of $\Gamma$ maps to the path $\gamma_{v_i}$ in $R_n$. Thus the $i$-th loop of $\Gamma$ maps to a path in $\Delta$ that is a lift of $\gamma_{v_i}$, thus by  Theorem~\ref{readingranndomwordsingraphs} we can assume that this image in $\Delta$ is $\alpha$-injective for $\alpha$ arbitrarily close to $1$.

This implies that for $1\le i\le k$ the image of the $i$-th loop of $\Gamma$ in $\Delta$ contains an arc $\sigma_i$ of $\Delta$ of length at least $\frac{1}{100n}|v_i|=\frac{1}{100n}N$ as the image of $\sigma_i$ is the union of at most $6n$ arcs of $\Delta$. By genericity these arcs are travelled only once by the first $k$ loops, in particular they are pairwise distinct. Moreover they are disjoint from $\Psi$. It follows that there exists a connected subgraph $\bar\Psi$ of $\Delta$ containing $\Psi$ such that the arcs $\sigma_i$ meet $\bar\Psi$ only in their endpoints. As the Betti number of $\Psi$ is at least $n$ it follows that the the subgraph of $\Delta$ spanned by $\bar\Psi$ and the $\sigma_i$ has Betti number $n+k$. As $\Delta$ has Betti number at most $2n-1$ this implies that $k\le n-1<n$.
\end{proof}

We now continue with the proof of Theorem~\ref{thm:main}.

Recall that $U_i$ was obtained from the generic word $a_i^{-1}u_i(\underline{b})$ by replacing each  $b_j$ by the respective $v_j(\underline{a})$. The genericity of the $v_j$ implies that there is very little cancellation in a freely reduced word in the $v_j$, thus $U_i$ is essentially the concatenations of $a_i^{-1}$ and the words $v_j$ that  were introduced to replace the letters~$b_1,\dots,b_n$.

\smallskip For each $v_j$ let $v_j'$ be the subword of $v_j$ obtained by chopping-off small initial and terminal segments of $v_i$ and such that for every occurrence of $b_j$ in $a_i^{-1}u_i(\underline{b})$ the word $v_j'$ survives without cancellation when we replace $b_1,..,b_n$ by $v_1,..,v_n$ in $a_i^{-1}u_i(\underline{b})$ and then freely and cyclically reduce to obtain  $U_i$.

As the cancellation is small relative to the length of the $v_j$ it follows that $v_j'$ is  almost all of $v_j$.
Moreover, we can choose $v_j'$ so that $|v_1'|=\dots=|v_n'|=N'$. Choosing the  $v_j$ from the appropriate generic set, we may assume that $N\ge N'\ge N(1-\epsilon_0)$ for any fixed $0<\epsilon_0<1$.

By Theorem~\ref{readingranndomwordsingraphs} and by the genericity of the presentation (*), for any $0<\alpha_0<1$ we may assume that the conclusion of Theorem~\ref{readingranndomwordsingraphs} holds for the words $v_j$. It follows that for any given $0<\alpha<1$ we may also assume that the words $v_j'$  satisfy the conclusion of Theorem~\ref{readingranndomwordsingraphs} as well. Indeed if we choose $\alpha_0=\frac{1}{2}(1+\alpha)$ and $\epsilon_0=\frac{1}{2}(1-\alpha)$ then any image of $v_j'$ in some graph as in the hypothesis of Theorem~\ref{readingranndomwordsingraphs} contains at least $\alpha_0\cdot N-\epsilon_0\cdot N=\alpha\cdot N\ge \alpha\cdot |v_i'|=\alpha N'$ distinct topological edges, so that the path corresponding to $v_j'$ in the graph in question is $\alpha$-injective.

\smallskip It follows from Lemma~\ref{wordsrepresentingb_i} that $x_{k+1}$, the label of the $(k+1)$-st loop, admits an $\alpha$-small cancellation move with respect to some $U_i$ with $\alpha$ close to $1$. Thus $x_{k+1}$ contains a subword $Z$ of length at least $\left(1-\frac{1}{10^4n}\right)N^2$ that is  a subword of a cyclic conjugate of some $U_i$.  After dropping a short suffix and prefix we can assume that $Z$ is a reduced product of remnants of some $v_j^{\pm 1}$ in $U_i$, and each such remnant contains the corresponding $(v_j')^{\pm 1}$. We refer to the sub-words $(v_j')^{\pm 1}$ of $Z$ (understood both as words and as positions in $Z$ where they occur) arising in this way as \emph{$v$-syllables} of $Z$. 
Denote the arc in the $(k+1)$-st loop of $\Gamma$ that corresponds to $Z$ by $\tilde s_Z$.  Note that $Z$ is readable in $\Delta$ since $Z$ is the label of the path $s_Z:=f_\Delta(\tilde s_Z)$. 

\begin{conv}\label{conv:Z}
We fix the word $Z$, the arc $\tilde s_Z$ in $\Gamma$ and its image $s_Z$ in $\Delta$ for the remainder of the proof of Theorem~\ref{thm:main}.
\end{conv}

In the following two lemmas we will establish some important
properties of the word $Z$ and of the path $s_Z$.

\begin{lem}\label{L:Claim}
For $1\le j\le n$ there exists a non-trivial distinguished subword $\tilde v_j$ of $v_j'$ (this means both the abstract word and the position in $v_j'$ where it occurs) such that the following hold:
\begin{enumerate}
\item  For any $v$-syllable $(v_j')^\epsilon$ of $Z$ with corresponding subword $\tilde v_j^\epsilon$ 
the subpath of $s_Z$ corresponding to $\tilde v_j^\epsilon$ is contained in an arc of $\Delta\backslash \Psi$. (This means the word $\tilde v_j^\epsilon$ is read on an arc of $\Delta\backslash \Psi$).
\item If $1\le j_1,j_2\le n$, $j_1\ne j_2$ and $(v_{j_1}')^\epsilon$ and  $(v_{j_2}')^\delta$ are two $v$-syllables of $Z$, then the subpaths of $s_Z$ corresponding to their distinguished subwords  $\tilde v_{j_1}^\epsilon$ and
$\tilde v_{j_2}^\delta$ have no topological edges in common.

\item Similarly, if $1\le j\le n$ and $v'$, $v''$ are two distinct (i.e. appearing in different positions in $Z$) $v$-syllables of $Z$ both representing $(v_j')^{\epsilon}$ and $(v_j')^{\delta}$, then the subpath of $s_Z$ corresponding to the distinguished subword $(\tilde v_j)^{\epsilon}$, $(\tilde v_j)^{\delta}$ 
are either disjoint or coincide if $\epsilon=\delta$ and are inverses if $\epsilon=-\delta$.
\end{enumerate}

\end{lem}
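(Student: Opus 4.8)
The plan is to produce $\tilde v_j$ as a long ``interior'' subword of $v_j'$ that is read along a single maximal arc of $\Delta\setminus\Psi$ each time the corresponding $v$-syllable occurs in $Z$. The tools are the $\alpha$-injectivity of lifts of generic words to $\Delta$ (Theorem~\ref{readingranndomwordsingraphs}), the uniqueness properties of subwords of the $v_j$ and of the $U_i$ (Lemmas~\ref{wordsoccuratmostonce} and \ref{lemU_i}), and the fact that words readable in the small graph $\Psi$ are highly non-generic.

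First I would record that $\Delta$ meets the hypotheses of Theorem~\ref{readingranndomwordsingraphs}: it is a connected core pair with $b(\Delta)\le 2n-1$, it folds onto $R_n$, and (as noted just before Convention~\ref{conv:Z}) it contains neither a lift of $R_n$ nor a $2$-sheeted cover of $R_n$. Consequently, for $\alpha$ as close to $1$ as we wish, every lift to $\Delta$ of a path labelled by $v_j'$, or by any sufficiently long subword of a $v_j$, is $\alpha$-injective. In particular, if $(v_j')^\epsilon$ is a $v$-syllable of $Z$ and $t$ is the subpath of $s_Z$ reading it, then $t$ traverses at least $\alpha N'$ distinct topological edges of $\Delta$, so its $t$-injective part has length at least $(2\alpha-1)N'$, and the total length of $t$ spent inside $\Psi$ is at most $(n+2)+2(1-\alpha)N'$.

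Next I would locate a long arc. Since $\Delta$ is a core pair with $b(\Delta)\le 2n-1$ it is a union of at most $6n$ maximal arcs, and $\Psi$ has at most $n+2$ edges (hence at most $3$ vertices and $b(\Psi)\ge n$), so $\Delta\setminus\Psi$ is a union of $O(n)$ maximal arcs, on each of which a reduced path is injective. Combining this with the $\alpha$-injectivity of $t$ and the bound on its $\Psi$-time, for one chosen occurrence of $(v_j')$ the path $t$ reads injectively, along a single maximal arc $\Lambda_j$ of $\Delta\setminus\Psi$, a subword $W_j$ of $v_j'$ of length at least $c(n)N'$. I would take $\tilde v_j$ to be a slight interior shrinking of $W_j$; it is then non-trivial, indeed of length at least $\alpha_0 N$ for a fixed $\alpha_0\in(0,1)$.

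The step I expect to be the main obstacle is upgrading this to a choice of $\tilde v_j$ that works uniformly, i.e.\ establishing (1) for \emph{every} occurrence of $(v_j')^{\pm1}$ in $Z$ (not just the chosen one) as well as the disjointness statements (2), (3). For (2) and (3) I would invoke Lemma~\ref{wordsoccuratmostonce}: $\tilde v_j$ is long and generic, so it occurs exactly once inside $v_j'$, inside no sign-variant of any $v_{j'}$ with $j'\ne j$, and two of its occurrences in $Z$ cannot overlap; combined with Lemma~\ref{lemU_i} this forces the subpaths of $s_Z$ reading the various $\tilde v_{j'}^{\pm1}$ to be pairwise edge-disjoint for distinct indices and to coincide (up to inversion) or be disjoint for a single index. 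For (1), if the reading of $\tilde v_j$ along some other $(v_j')$-syllable of $s_Z$ were not confined to a single maximal arc of $\Delta\setminus\Psi$, then inside the $\tilde v_j$-window $s_Z$ would either enter $\Psi$ or cross a branch-vertex of $\Delta$; there are only $O(n)$ branch-vertices and at most $n+2$ edges of $\Psi$, and any subword of the generic word $v_j'$ readable in $\Psi$ has length $O(\log N)$ (such a word has bounded period beyond a bounded prefix, which does not occur generically). Using this together with Lemmas~\ref{nonperiodic} and \ref{manysubpaths} to bound how often $s_Z$ can meet $\Psi$ or a branch-vertex within a fixed window of $v_j'$, one shrinks $\tilde v_j$ further to avoid the finitely many offending sub-windows while keeping $|\tilde v_j|\ge\alpha_0' N$, and obtains (1). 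The delicate bookkeeping is to carry out these successive shrinkings of $\tilde v_j$ compatibly while keeping it long and generic; once that is done the lemma follows.
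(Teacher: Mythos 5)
Your opening moves are sound and match the paper: $\Delta$ satisfies the hypotheses of Theorem~\ref{readingranndomwordsingraphs}, so any lift of $v_j'$ is $\alpha$-injective, and for a single occurrence you can locate a long subword $W_j$ of $v_j'$ read injectively along one arc of $\Delta\setminus\Psi$. The invocations of Lemma~\ref{wordsoccuratmostonce} and Lemma~\ref{lemU_i} for properties (2) and (3), once $\tilde v_j$ has been constructed, are also consistent with what the paper needs.

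The gap is exactly at the point you flag as the main obstacle, and your sketched fix does not close it. You want to shrink $\tilde v_j$ ``to avoid the finitely many offending sub-windows,'' but you have not shown there are only boundedly many such sub-windows. The word $Z$ has length $\approx N^2$ and contains on the order of $N$ occurrences of $(v_j')^{\pm1}$ as $v$-syllables. Each such occurrence determines a reading of $v_j'$ in $\Delta$ and hence, a priori, its own collection of positions in $v_j'$ where the reading sits in $\Psi$ or near a branch vertex. Even if each occurrence contributes ``bad'' positions of total length only $O(\log N)$ (your periodicity estimate) or $((1-\alpha)+O(nd))N'$ (the $\alpha$-injectivity estimate), summing naively over $\Theta(N)$ occurrences overwhelms $|v_j'|\approx N$, and no room is left to choose a common $\tilde v_j$. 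The finiteness of the offending set does not follow from ``$O(n)$ branch vertices and $\Psi$ small.''

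What the paper proves, and what your argument is missing, is a uniform bound \emph{independent of $N$} on the number of distinct \emph{readings} of $v_j'$ in $\Delta$. Concretely, the paper introduces the notion of a basic decomposition $v_j'=y_0p_1y_1\cdots p_my_m$ (with the long pieces $p_t$ read on arcs of $\Delta\setminus\Psi'$ and the short pieces $y_t$ read in $\Psi'$), and shows that the tuple $(p_1,\dots,p_m)$, viewed as a path in the graph $\tilde\Delta$ obtained by collapsing $\Psi'$ and replacing maximal arcs by single edges, is an almost-simple edge-path. The number $f(n)$ of such paths in a graph of bounded Betti number with no valence-$2$ vertices is a constant depending only on $n$, and uniqueness of long subwords of $v_j'$ (Lemma~\ref{wordsoccuratmostonce}) makes the path determine the tuple. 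Hence there are at most $f(n)$ distinct basic decompositions, and the total length of all the $y_t$ taken over \emph{all} of them is at most $f(n)\cdot\bigl((1-\alpha)+6nd(n)\bigr)|v_j'|$, which is $\le\frac12|v_j'|$ for suitable $\alpha$ near $1$ and $d(n)$ near $0$. This is the counting step that lets the paper pick a single position of $v_j'$ that is good for every occurrence simultaneously. Your proposal has the right local pieces but lacks this global combinatorial bound; without it, the simultaneous choice of $\tilde v_j$ across all $v$-syllables is unjustified.

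A secondary, smaller discrepancy: the paper works with the enlarged graph $\Psi'$ consisting of all maximal arcs of length $<d(n)N'$, not with $\Psi$ itself. Using $\Psi$ directly, as you do, makes the control of the short pieces harder, since a maximal arc of $\Delta\setminus\Psi$ could still be short and hence not governed by the uniqueness-of-long-subwords argument; passing to $\Psi'$ is what makes the injectivity of the map from tuples $(p_1,\dots,p_m)$ to paths in $\tilde\Delta$ go through.
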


\begin{proof}

For a finite graph $\Lambda$ let $f(\Lambda)$ be the number of paths $e_1,\ldots ,e_k$ in $\Lambda$ such that $\{e_i,e_i^ {-1}\}\neq \{e_j,e_j^ {-1}\}$ for $i\neq j\in \{1,\ldots ,k\}$ and $\{i,j\}\neq \{1,k\}$. Thus $f(\Lambda)$ is the number of path that travel no topological edge twice except possibly that the first and the last edges may agree. Let $f(n)$ be the maximal $f(\Lambda)$ where $\Lambda$ is a graph with the following properties:
\begin{enumerate}
\item There exists a vertex $v\in V\Lambda$ such that $(\Lambda,v)$
  is a core graph with respect to $v$  of Betti number at most $2n-1$.
\item  $\Lambda$ has no valence-2 vertex.  
\end{enumerate} 

Choose $\alpha=\alpha(n)<1$ (close to 1) and $d=d(n)>0$ (close to 0) such that $$f(n)\cdot  \left((1-\alpha)+6\cdot n\cdot d(n)\right)\le \frac{1}{2}.$$  
As noted above, by genericity we may assume that any path corresponding to $v_j'$ is $\alpha$-injective. Moreover by Lemma~\ref{wordsoccuratmostonce} we may assume that $v_j'$ does not contain any word of length $d(n)\cdot |v_j'|$ twice. Let $\Psi'$ be the union of all maximal arcs of $\Delta$ of length less than  $d(n)|v_j'|=d(n)N'$. Note that $\Psi'$ contains $\Psi$.

For $j=1,\dots, n$ we say that a reduced decomposition
\[
v_j'=y_0p_1y_1\dots p_m y_m \tag{!}
\]
is \emph{basic} if $m\ge 1$ and there exists a way to read $v_j'$ in $\Delta$ such that the following hold:
\begin{enumerate}
\item $|p_t|\ge d(n)\cdot |v_j'|=d(n)N'$ for $1\le i\le m$ and $p_t$ is read on an arc of $\Delta\setminus \Psi'$.
\item For $1\le t\le m-1$ each $y_t$ is read $\Psi'$.

\item $y_0=y_0'y_0''$ where $y_0''$ is read in $\Psi'$, $y_0'$ is read on an arc (possibly degenerate) of $\Delta\setminus \Psi'$ and $|y_0'|<d(n)N'$.
\item $y_m=y_m'y_m''$ where $y_m'$ is read in $\Psi'$, $y_m''$ is read on an arc of $\Delta\setminus \Psi'$ and $|y_m''|<d(n)N'$.
\end{enumerate} 

For a basic decomposition (!) of $v_j'$, we say that a path $\beta$ in $\Delta$ corresponding to reading $v_j'$ in $\Delta$ as in
the above definition, is a \emph{basic path}  representing~(!).

Note that the $y_t$ may or may not be degenerate, i.e. may be single vertices of $\Delta$. Note that whenever $v_j'$ can be read in $\Delta$ then the corresponding path is a basic path corresponding to a basic decomposition. This implies that any subpath of $s_Z$ corresponding to a subword of type $v_j'^{\pm 1}$ is a basic path corresponding to a basic decomposition.

Note that (2) implies that that the word $p_t$ is read along a maximal arc of $\Delta\setminus \Psi'$ if $2\le t\le m-1$.

Moreover for any $t_1\ne t_2$ and $\{t_1,t_2\}\neq \{1,m\}$ the subpaths of $\beta$ corresponding  to $p_{t_1}$ and $p_{t_2}$ have no topological edges in common by the choice of $d(n)$. The subpath corresponding to  $p_m$ can overlap the subpath corresponding to $p_1$ in a subpath of length at most $d(n)N'$ if they lie on the same arc of $\Delta\setminus \Psi'$.

Let now $\hat\Delta$ be  the graph obtained from $\Delta$ by collapsing all connected components of $\Psi'$ to points. If (!) is a basic decomposition of $V_j'$ and $\beta$ is a basic path in $\Delta$ representing (!), then the image of $\beta$ in $\hat \Delta$ is a path $\hat \beta$ with label $y_0'\cdot p_1\cdot \ldots \cdot p_m\cdot y_m''$.  Moreover, the path $\hat\beta$ is $(d(n)\cdot N')$-almost edge-simple, that is, it travels at most $d(n)\cdot N'$ topological edge of $\hat\Delta$ more than once. Note also that $b_1(\hat\Delta)\le n-1$. 
Since $(\hat\Delta,p_{\hat\Delta})$ is a core pair where $p_{\hat\Delta}$ is the image of $p_\Delta$ in $\hat\Delta$, it now follows that $\hat\Delta$ has at most $3n-1$ maximal arcs. Therefore for any basic decomposition (!) we have $m\le 3n$ as distinct $p_i$ lie on distinct maximal arcs, with the possible exception of $p_1$ and $p_m$ lying on the same maximal arc.

The number of tuples $(p_1,\ldots ,p_m)$ (with each $p_t$ is understood as a subword of $v_j'$ occurring in a specific position in $v_j'$) arising in basic decompositions of a given $v_j'$, is bounded above by $f(n)$. This can been as follows: Let $\tilde\Delta$ be the graph obtained from $\hat\Delta$ by replacing maximal arcs  by edges, $\tilde\Delta$ has no vertex of valence~$2$.  Now any tuple $(p_1,\ldots ,p_m)$ gives rise to a path $e_1,\ldots ,e_m$ in $\tilde\Delta$ where $e_i$ is the edge obtained from the arc on which $p_i$ was read by the basic path.  This path satisfies the condition given in the definition of $f(n)$, and therefore there are at most $f(\tilde\Delta)\le f(n)$ different paths that occur. Each such path $(e_1,\ldots ,e_m)$ determines a unique tuple $(p_1,\ldots ,p_m)$: That $p_i$ is determined by $e_i$ is obvious for $2\le i\le m-1$ as $p_i$ is simply the label of the maximal arc of $\Delta$ corresponding to $e_i$. For  $i=1,m$ this follows from the fact that subwords of length at least $d(n)|v_i'|$ occur in only one position of the word $v_i'$. This shows that at most $f(n)$ tuples $(p_1,\ldots ,p_m)$ can occur.

We now show that the total length of all $y_t$ occurring in any fixed basic decompositions of $v_j'$ is at most $((1-\alpha)+6nd(n))\cdot |v_j'|$.


Choose a fixed basic decomposition (!) of $v_j'$ and let $\beta$ be a basic path representing (!). 
Each $y_t$ decomposes as a concatenation of \emph{arc subwords} that are read along maximal arcs in $\Psi'$ or the paths corresponding to $y_0'$ or $y_m''$ as $\beta$ is read in $\Delta$. 
Note further that the sum of the lengths of $y_0,\dots, y_m$ exceeds the number of edges in their image by at most $(1-\alpha)|v_j'|$ as $v_j'$ is $\alpha$-injective.  Each maximal arc in $\Psi'$  has length $< d(n)|v_j'|$ and so do the paths corresponding to $y_0'$ and $y_m''$. Thus the number of edges in the image of the arcs corresponding to $y_0,\dots, y_m$ is bounded from above by $6nd(n)|v_j'|$, since $\Delta$ has at most $6n-1$ maximal arcs and if $y_0'$ or $y_m''$ are non-degenerate then at most $6n-2$ of these arcs lie in $\Psi'$. Therefore 
 \[
 \sum_{t=0}^m |y_t| \le ((1-\alpha)+6nd(n))\cdot |v_j'|.
 \]

By definition of $f(n)$, it follows that the total length of all $y_t$ occurring in all possible basic decompositions of a given $v_j'$ is at most $$f(n)\cdot ((1-\alpha)+6nd(n))\cdot |v_i'|\le \frac{1}{2}|v_j'|.$$

Thus there exists a non-trivial subword $\tilde v_j$ of $v_j'$ (we
could actually choose $\tilde v_j$ to be a 1-letter subword) such that
in no basic decomposition of $v_j'$ this subword $\tilde v_j$
overlaps with any $y_t$. Then the subwords $\tilde v_1,\dots, \tilde
v_n$ satisfy  the requirements of Lemma~\ref{L:Claim}
\end{proof}

Recall that in Definition~\ref{def:q-injective} we defined the notions of an edge/arc of a graph $\Upsilon$ being \emph{$\Upsilon$}-injective with respect to a graph map from $\Upsilon$ to another graph. Recall also that in our situation the map  $f:\Gamma\to R_n$ factors
through $f_{\Delta}:\Gamma\to\Delta$. In the following
$\Upsilon$-injectivity of some arc $\Upsilon$ in $\Gamma$ refers to
$\Upsilon$-injectivity with respect to the map $f_{\Delta}$. Recall that $p_\Delta=f_\Delta(p_\Gamma)$ is the image of the base-vertex of
$\Gamma$ in $\Delta$.

Recall that, as specified in Convention~\ref{conv:Z},  the $(k+1)$-st loop of $\Gamma$
  contains an arc $\tilde s_Z$ labelled by a $U$-word $Z$ of length at
  least $(1-\frac{1}{10^4n})N^2$, and that the path $s_Z$ in $\Delta$ is the image of $\tilde s_Z$ under $p_\Delta$.

\begin{lem}\label{wecanshorten} The arc $\tilde s_Z$ contains a
  $\tilde s_Z$-injective subarc $Q$ of length  $\frac{1}{100n}N^2$ that
  is mapped to an arc $Q'$ in $\Delta$ such that $Q'$ does not contain
  the vertex $p_{\Delta}$ in its interior.
\end{lem}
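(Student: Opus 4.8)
The plan is to reduce the lemma to a purely combinatorial statement about the graph $\Delta$, and then to establish that statement by a volume count, one case of which is handled by contradiction via genericity. I claim it suffices to produce an arc $Q'$ of $\Delta$ of length $\frac{1}{100n}N^2$, not containing $p_\Delta$ in its interior, such that $s_Z$ performs a single monotone traversal of $Q'$. Given such a $Q'$, the lemma follows quickly: since $f_\Delta$ preserves $R_n$-labels and $Z$ is freely reduced, $s_Z$ is a reduced edge-path, so it moves monotonically along every arc of $\Delta$, and the chosen monotone traversal of $Q'$ lifts to a single subarc $Q$ of $\tilde s_Z$ with $|Q|=|Q'|=\frac{1}{100n}N^2$. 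To see that $Q$ is $\tilde s_Z$-injective we invoke genericity: $Z$ is a subword of a cyclic conjugate of $U_i$, hence of $U_i^2$, so by Lemma~\ref{lemU_i}(2) (after shifting an occurrence into the first copy of $U_i$) the word $Z$ has no repeated subword of length $\ge\frac{1}{10^{10}n}N^2$ and no subword $W$ of that length whose inverse also occurs in $Z$. As $Q'$ is contained in a maximal arc $\lambda$ with $|\lambda|\ge|Q'|>\frac{1}{10^{10}n}N^2$, a second traversal of any edge of $Q'$ by $s_Z$ would force $s_Z$ to reread the label of $\lambda$ or its inverse, contradicting this. Thus every edge of $Q$ maps to a topological edge of $\Delta$ traversed by $s_Z$ exactly once, and $Q$ is as required.

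It therefore remains to build $Q'$, and this is where the real work lies. Up to short end pieces $Z$ is a reduced product of $r\ge(1-o(1))N$ many $v$-syllables $(v_{j_1}')^{\epsilon_1}\cdots(v_{j_r}')^{\epsilon_r}$, and by genericity of the $v_j$ together with Theorem~\ref{readingranndomwordsingraphs} every reading of $v_j'$ in $\Delta$ is $\alpha$-injective for any fixed $\alpha<1$. Re-running the basic-decomposition analysis from the proof of Lemma~\ref{L:Claim} (relative to the union $\Psi'$ of the maximal arcs of $\Delta$ of length $<d(n)N'$), each syllable reads, inside $s_Z$, at least $(1-o(1))N'$ of its length along at most $3n$ ``long chunks'', each a monotone sweep of a sub-arc of some maximal arc of $\Delta$ of length $\ge d(n)N'$; discarding the at most two syllables meeting the endpoints of $s_Z$, each such sweep lies inside a full monotone traversal of the ambient maximal arc. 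Summing over the $r$ syllables shows that the full monotone traversals of length-$\ge d(n)N'$ maximal arcs performed by $s_Z$ have total length $\ge\frac14 N^2$; since $b(\Delta)\le 2n-1$ there are at most $6n-4$ maximal arcs, so some maximal arc $\lambda^{*}$ of length $\ge d(n)N'$ is traversed by $s_Z$ in total length $\ge\frac{N^2}{C(n)}$ with $C(n)=O(n)$.

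If $|\lambda^{*}|\ge\frac{1}{10^{10}n}N^2$, then $s_Z$ traverses $\lambda^{*}$ at most once (genericity, as above), hence $|\lambda^{*}|\ge\frac{N^2}{3C(n)}$, and $\lambda^{*}$ carries a full monotone traversal by $s_Z$; cutting $\lambda^{*}$ at $p_\Delta$ if necessary (the longer piece has length $\ge\frac{1}{2}|\lambda^{*}|$) leaves an arc not containing $p_\Delta$ in its interior of length $\ge\frac{N^2}{6C(n)}\ge\frac{1}{100n}N^2$ swept monotonically by $s_Z$, and a sub-arc of length exactly $\frac{1}{100n}N^2$ is the desired $Q'$. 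The remaining, and principal, obstacle is the case $|\lambda^{*}|<\frac{1}{10^{10}n}N^2$: then $|\lambda^{*}|\in[d(n)N',\frac{1}{10^{10}n}N^2)$ and $s_Z$ traverses $\lambda^{*}$ at least $\frac{N^2}{C(n)|\lambda^{*}|}\gg N/d(n)$ times, each traversal rereading the fixed word $\ell_{\lambda^{*}}$ (or its inverse) of length $\ge d(n)N'$. By Lemma~\ref{wordsoccuratmostonce}, $\ell_{\lambda^{*}}$ is a subword of at most one $v_j^{\pm1}$, so every occurrence of $\ell_{\lambda^{*}}^{\pm1}$ in $Z\subseteq U_i^2$ is anchored at an occurrence of a $v_j$-syllable of $U_i$, or straddles a syllable boundary in a position pinned down by Lemma~\ref{wordsoccuratmostonce}; for a generic $u_i$ the number of such occurrences is $O(N/n)$, which — once the constants $\alpha$, $d(n)$, $\frac{1}{100n}$ are fixed compatibly (in particular $d(n)$ taken small enough, which is permissible in the proof of Lemma~\ref{L:Claim}) — is strictly smaller than the lower bound $\gg N/d(n)$ just obtained. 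This contradiction eliminates the remaining case. I expect the careful bookkeeping of this last chain of inequalities, together with the handling of the boundary-straddling occurrences of $\ell_{\lambda^{*}}$, to be the main technical difficulty.
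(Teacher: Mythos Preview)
Your reduction at the start is reasonable, but the construction of $Q'$ has a genuine gap in its second case. You split according to whether $|\lambda^*|\ge\frac{1}{10^{10}n}N^2$, and in the complementary case you assert that the number of full traversals of $\lambda^*$ by $s_Z$ is at least $\frac{N^2}{C(n)|\lambda^*|}\gg N/d(n)$, and then bound the number of occurrences of the label $\ell_{\lambda^*}$ in $Z$ by $O(N/n)$ via Lemma~\ref{wordsoccuratmostonce}. Both steps break down simultaneously for $|\lambda^*|$ in the wide intermediate range between roughly $N$ and $\frac{1}{10^{10}n}N^2$: the traversal count $\frac{N^2}{C(n)|\lambda^*|}$ is then only a bounded constant (certainly not $\gg N$), and $\ell_{\lambda^*}$ is longer than any $v_j$, so it is not a subword of a single $v_j^{\pm1}$ and Lemma~\ref{wordsoccuratmostonce} says nothing. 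Even when $|\lambda^*|$ is of order $N$ the constants do not close: with $C(n)\approx 24n$ (from your $\tfrac14 N^2$ and the $6n-4$ maximal arcs) and an occurrence bound of order $N/n$, the contradiction requires $|\lambda^*|\lesssim N/24$, which is much stronger than the case hypothesis $|\lambda^*|<\frac{1}{10^{10}n}N^2$. Adjusting $d(n)$ does not help, since $d(n)$ governs only the lower end of the range for $|\lambda^*|$, not the upper end.

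The paper avoids this dichotomy altogether by passing to the $b$-alphabet. From the image $\Theta$ of $s_Z$ in $\Delta$ and the distinguished sub-arcs supplied by Lemma~\ref{L:Claim} it builds a graph $\hat\Theta$ with edges labelled by $b_1,\dots,b_n$ and first Betti number at most $n-1$; replacing each $v_j$-remnant in $Z'$ by $b_j^{\pm1}$ produces a subword $\hat Z$ of $u_i^{\pm1}$ of length $\ge\frac{49}{100}N$ readable in $\hat\Theta$. Theorem~\ref{readingranndomwordsingraphs}, now applied to the genericity of $u_i$, gives $\alpha$-injectivity of $s_{\hat Z}$ directly at the syllable scale; translated back to $\Delta$, every subpath of $s_{Z'}$ of length $\frac{1}{100n}N^2$ contains an $s_{Z'}$-injective edge, after which a short pigeonhole on the at most $6n$ injective arcs (subdivided at $p_\Delta$) produces the required $Q'$. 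The idea you are missing is that controlling how $s_Z$ revisits arcs of $\Delta$ at the $N^2$ scale is precisely a statement about the $b$-word $u_i$, and the machinery of Theorem~\ref{readingranndomwordsingraphs} already delivers this once one constructs the correct $b$-labelled graph; attempting to reach the same conclusion by counting occurrences of $\ell_{\lambda^*}$ in the $a$-alphabet cannot cover the full range of $|\lambda^*|$.
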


\begin{proof}

Let $\tilde v_1, \dots, \tilde v_n$ be distinguished subwords of
$v_1',\dots v_n'$ provide by Lemma~\ref{L:Claim}.

\smallskip Let now $\Theta$ be the subgraph of $\Delta$ consisting of the edges traversed by $s_Z$. 


We now construct a graph $\hat\Theta$ from $\Theta$ as follows:

First, collapse all edges of $\Theta$ that do not lie on the arcs that are
images  of the subpaths of $s_Z$ corresponding to
distinguished subwords $\tilde v_j$ of the $v$-syllables of $Z$.  The resulting graph $\Theta'$ is the union of arcs labeled with the $\tilde v_j$ and two such arcs intersect at most in the endpoints.

Now in $\Theta'$ replace any such arc with label $\tilde
v_j$ with an edge with label $b_j$. We thus obtain a graph $\hat \Theta$
with edges labelled by letters of $\{b_1^{\pm 1},\dots, b_n^{\pm 1}\}$. Note that
$\hat\Theta$ has first Betti number at most $n-1$, since all edges of
$\Theta$ that lie in $\Psi$ are collapsed when constructing
$\Theta'$.

Since $Z$ is subword of length at least $\left(1-\frac{1}{10^4n}\right)N^2$  of some cyclic permutation of some $U_i^{\pm 1}$, the word $Z$ contains a subword $Z'$ with the following properties:
\begin{enumerate}
\item $|Z'|\ge \frac{49}{100}N^ 2$.
\item $Z'$ consists of the remnants of the  $v_i$ after substituting the $b_i$  in $u_i$.
\end{enumerate}

Let $s_{Z'}$ be the subpath of $s_Z$ corresponding to the subword $Z'$ of $Z$. Similarly let $\tilde s_{Z'}$ be the subpath of $\tilde s_Z$ corresponding to $Z'$.

Now for $j=1,\dots, n$ replace in $Z'$ any remnant of $v_j^\varepsilon$ by $b_j^\varepsilon$. We thus
obtain a freely reduced word $\hat Z$ in the $\{b_1^{\pm 1},\dots, b_n^{\pm 1}\}$ that is a subword of $u_i^{\pm 1}$. Note that $|\hat Z|\ge \frac{49}{100}N$.

The word $\hat Z$ is readable in $\hat\Theta$ by construction. Hence, by Theorem~\ref{readingranndomwordsingraphs} and by making the appropriate genericity assumptions on the $u_i$, we may assume that the corresponding path $s_{\hat Z}$ in $\hat \Theta$ is $\alpha$-injective for some $\alpha>\frac{1000n-1}{1000n}$. 
It follows from $\alpha$-injectivity of $s_{\hat Z}$ that any subpath of $s_{\hat Z}$ of length $$2(1-\alpha)\cdot |\hat Z|+1\le (1-\alpha)\cdot3N$$ contains an $s_{\hat Z}$-injective edge.
This implies that any subpath of $s_{Z'}$  of length $$((1-\alpha)\cdot3N+1)N\le (1-\alpha)\cdot4N^2=\frac{1}{250n}N^2\le \frac{1}{100n}N^2$$
contains an $s_{Z'}$-injective edge.

Now write $s_{Z'}$ as the the product of $20n-1$ subpaths $s_1,\ldots ,s_{20n-1}$ of length greater or equal than $\frac{1}{50n}N^2$.  This can be done since $|s_{Z'}|\ge \frac{49}{100}N^2$. By the argument given above, it follows that every $s_j$ contains an $s_{Z'}$-injective edge.

Since $\Delta$  has at most $3(2n-1)-3=6n-6$ maximal arcs, it follows
that the  the collection of all $s_{Z'}$-injective edges in $\Delta$
is the union of at most  $6n-5$ $s_{Z'}$-injective arcs of
$\Delta$.  At most one of these $s_{Z'}$-injective arcs of
$\Delta$ can contain $p_\Delta$ in its interior. After possibly subdividing
this arc by  $p_\Delta$ in two, it follows that  the collection of all
$s_{Z'}$-injective edges in $\Delta$ is the union of pairwise
non-overlapping arcs $t_1,\ldots ,t_k$, with $k\le 6n$ and such that $p_\Delta$ does not lie in the interior of any $t_i$.

Since each subpath $s_i$ of $s_{Z'}$ contains an $s_{Z'}$-injective
edge, it follows that each $s_i$ nontrivially overlaps some
$t_j$. We claim that there exist $k_0$ and $j$ such that each of
$s_{k_0-1},s_{k_0},s_{k_0+1}$ nontrivially overlaps $t_j$.  Otherwise
each $t_j$ would overlap at most two of the paths $s_1, \dots,
s_{20n-1}$, and hence the paths $t_1,\ldots ,t_k$ overall at most
$2\cdot 6n =12n < 20n-1$ of the paths $s_1, \dots, s_{20n-1}$. This
contradicts the fact that each $s_i$ intersects some $t_j$. Thus
indeed there exist $k_0$ and $j$ such that each of
$s_{k_0-1},s_{k_0},s_{k_0+1}$ overlaps $t_j$, so that $t_j$ contains
$s_{k_0}$. Note that $s_{k_0}$ does not contain the vertex $p_\Delta$
in its interior, since $p_\Delta$ does not belong to the interior of $t_j$.

Thus $s_{k_0}$ is an arc of $\Delta$ which is $s_{Z'}$-injective and
which has length $\ge \frac{1}{50n}N^2$.

By genericity of presentation $(*)$ we may assume that there is no
word $W$ of length $\frac{1}{500n}N^2$ such that $W^{\pm 1}$ occurs
more than once in $Z$. Since $s_{k_0}$ is an arc of length  $\ge
\frac{1}{50n}N^2$ and since $Z$ is freely reduced, the path $s_Z$ cannot run over the entire arc $s_{k_0}$ more than once.
We also know that $s_{k_0}$ does occur as a subpath of $s_{Z}$, so that at some point the path $s_Z$ does run over the entire arc $s_{k_0}$.
It may also happen that, in addition, the initial and/or terminal segment of $s_Z$ overlaps the arc $s_{k_0}$ nontrivially; however such overlaps must have length $\le \frac{1}{500n}N^2$.
Therefore $s_{k_0}$  contains a sub arc $Q'$ of length $\frac{1}{100n}N^2$ such that $Q'$ is $s_Z$-injective.

We put  $Q$ be the lift of $Q'$ to $\Gamma$.  Since $|Q|=|Q'|=\frac{1}{100n}N^2$, the conclusion of
Lemma~\ref{wecanshorten}  holds with these choices of $Q$ and $Q'$. Thus Lemma~\ref{wecanshorten} is established.
\end{proof}

\begin{proof}[Concluding the proof of Theorem~\ref{thm:main}]

We now have all tools to conclude the proof of the
Theorem~\ref{thm:main}. Recall that we argue by contradiction and that
we have assumed that the $2n-1$-tuples $(a_1,\dots, a_n,1\dots, 1)$
and $(b_1,\dots, b_n,1\dots, 1)$ are Nielsen-equivalent in $G$.

We will see that we can  apply Lemma~\ref{reductionmove} to reduce the complexity of $x_{k+1}$ while maintaining the complexity of $x_i$ for $1\le i\le k$ and preserving the fact that $x_i=_Gb_i$ for $1\le i\le n$.  This will yield is a contradiction to the minimality assumption about $(x_1,\dots, x_{2n-1})$ made at the beginning of the proof.

Recall that $k$ was maximal such that $x_i=v_i$ in $F(A)$ for all $1\le i\le k$ and that
by Lemma~\ref{lem:k<n} we know that $k\le n-1$. By convention if
$x_1\ne v_1$, we set $k=0$. Thus we always have $1\le k+1\le n$.

Let $W$ be the word that is the label of the arc $Q$ of length
$\frac{1}{100n}N^2$ provided by Lemma~\ref{wecanshorten}.  Thus $W$ is of length $\frac{1}{100n}N^2$. Note that $W$ is a subword of the $U$-subword $Z$ of length at least $\left(1-\frac{1}{10^4n}\right)N^2$.
Recall that $Q$ is a sub-arc of the $(k+1)$-st petal of $\Gamma$ and
that under the map $\Gamma\to\Delta$ the arc $Q$ maps injectively to
an arc $Q'$ of $\Delta$.

By
Lemma~\ref{wecanshorten} the base-vertex $p_\Delta=f_\Delta(p_\Gamma)$ does
not belong to the interior of $Q'$. Since the petals of $\Gamma$ are
labelled by freely reduced words, it now follows that the full
preimage of $Q'$ in $\Gamma$ under $f_\Delta$ is the union of a
collection of pairwise non-overlapping arcs $Q_1,\dots, Q_m$ in
$\Gamma$ such that $Q=Q_1$ and such that each of the arcs $Q_1,\dots,
Q_m$ maps bijectively to $Q'$.

For $1\le i\le 2n-1$ write $x_{i}$ as a reduced product

\[
x_{0,i}W^{\varepsilon_{1,i}}x_{1,i}W^{\varepsilon_{2,i}}\cdot\ldots
W^{\varepsilon_{q_i,i}}x_{q_i,i}\tag{$\ddag$}
\]
 with $\varepsilon_{j,i}\in\{-1,1\}$
where the $W^{\varepsilon_{j,i}}$ are the occurrences of $W^{\pm 1}$ in
$x_i$ corresponding to all those of the arcs $Q_1,\dots, Q_m$ that are
contained in the $i$-th petal-loop of $\Gamma$. 

Note that for $1\le i\le k$ the length of $x_i=v_i$ is too short to
contain $W$ as a subword. Therefore for $1\le i\le k$ we have
$x_i=x_{0,i}$ and $q_i=0$.

We now perform the change described in Lemma~\ref{reductionmove}
simultaneously for all $x_i$, $i=1,\dots, 2n-1$. That is, we pick a
word $V\in F(A)$ such that $WV^{-1}$ is a cyclic permutation of one of
the defining relations $U_s^{\pm 1}$ of presentation (**). Then for
each $x_i$ we replace each $W^{\epsilon_{j,i}}$ in the decomposition $(\ddag)$
of $x_i$ by $V^{\epsilon_{j,i}}$. Denote the resulting word by $z_i$ and
denote the freely reduced form of $z_i$ by $x_i'$, where $i=1,\dots, 2n-1$.

By construction,  $x_i$ is unchanged for $1\le i\le k$, that is
$x_i=x_i'$ for $1\le i\le k$. Moreover the complexity of $x_{k+1}$
decreases as the presence of subword $Z$ of $x_{k+1}$ triggers clause (2) of Lemma~\ref{reductionmove}. Thus we have $c(x_i)=c(x_i')$ for $1\le i\le k$ and $c(x_{k+1}')<c(x_{k+1})$.

Since $W=_G V$, we have $x_i=_Gx'_i$ for $1\le i\le
2n-1$. In particular we have $x_i'=_Gb_i$ for $1\le i\le n$. Thus to
get the desired contradition to the minimal choice of $(x_1,\dots,
x_{2n-1})$ we only need to verify that the tuple $(x_1',\ldots
,x'_{2n-1})$  is Nielsen-equivalent to $(a_1,\ldots ,a_n,1,\ldots ,1)$
in $F(A)$. This fact follows from the following considerations:

In the graph $\Delta$ we replace the arc $Q'$ by an arc $Y$ with label $V$ and
call the resulting graph $\Delta'$. The graph $\Delta'$ still folds
onto $R_n$, since $\Delta'$ contains $\Psi$.

By
Lemma~\ref{wecanshorten} the vertex $p_\Delta=f_\Delta(p_\Gamma)$ does
not belong to the interior of $Q'$. Since the petals of $\Gamma$ are
labelled by freely reduced words, it now follows that the full
preimage of $Q'$ in $\Gamma$ under $f_\Delta$ is the union of a
collection of pairwise non-overlapping arcs $Q_1,\dots, Q_m$ in
$\Gamma$ such that $Q=Q_1$ and such that each of the arcs $Q_1,\dots,
Q_m$ maps bijectively to $Q'$. Note also that each of the arcs $Q_1,\dots, Q_m$
is labelled by $W$ amd that the base-vertex
$p_\Gamma$ of $\Gamma$ does not belong to the interior of any of
$Q_1,\dots, Q_m$.

We obtain a new graph $\Gamma'$ from $\Gamma$ by replacing each $Q_j$
by an arc $Y_j$ labelled by $V$. Then there is a natural label-preserving
graph map
$f_{\Delta'}:\Gamma'\to \Delta'$ which sends each arc $Q_j$ bijectively to
$Y$, and which agrees with $f_\Delta$ on the complement of $Y_1\cup
\dots \cup  Y_m$ in $\Gamma'$. The map $f_\Delta: \Gamma\to\Delta$
arose from a sequence of folds and therefore $f_\Delta$ is $\pi_1$-surjective.

Topologically $f_{\Delta'}$ can be
viewed as the same as the map $f_\Delta$ and hence the map
$f_{\Delta'}$ is $\pi_1$-surjective. As we noted earlier, the graph
$\Delta'$ folds onto $R_n$, and hence the map $\Delta'\to R_n$ is
$\pi_1$-surjective. By composing this map with $f_{\Delta'}$, we
conclude that the canonical map $\Gamma'\to R_n$ preserving
edge-labels is $\pi_1$-surjective.

The graph $\Gamma'$ is a wedge of $2n-1$ circles with labels
$z_1,\dots, z_{2n-1}$.  As noted above, the canonical map
$\Gamma'\to R_n$ is $\pi_1$-surjective. Since $\pi_1(\Gamma')$ is
generated by the loop-petals with labels $z_1,\dots, z_{2n-1}$ and
since each $z_i$ freely reduces to
$x_i'$, it follows that the elements $x_1',\dots, x_{2n-1}'$ of $F(A)$
generate the entire group $F(A)=F(a_1,\dots, a_n)$. 
Therefore the $(2n-1)$-tuple $(x_1',\dots, x_{2n-1}')$ is
Nielsen-equivalent in $F(A)$ to the $(2n-1)$-tuple $(a_1,\dots, a_n, 1,\dots, 1)$.

Recall that $x_i=v_i=x_i'$ for $i=1,\dots, k$ (where $k<n$) and that
$c(x_{k+1}')<c(x_k)$.
Therefore $c(x_1',\dots, x_{2n-1}')< c(x_1,\dots, x_{2n-1})$, which
contradicts the minimality assumption on $(x_1,\dots, x_{2n-1})$.

Hence the $2n-1$-tuples $(a_1,\dots, a_n,1,\dots, 1)$
and $(b_1,\dots, b_n,1,\dots, 1)$ are not Nielsen-equivalent in $G$,
and Theorem~\ref{thm:main} is proved.

\end{proof}

\end{document}